\newcommand{\tr}{\text{tr}}
\newcommand{\eps}{\epsilon}
\newcommand{\Id}{\text{Id}}
\newcommand{\Rm}{\text{Rm}}
\newcommand{\Ric}{\text{Ric}}
\newcommand{\Vol}{\text{Vol}}
\newcommand{\inj}{\text{inj}}
\newcommand{\dC}{\mathds{C}}
\newcommand{\dR}{\mathds{R}}
\newcommand{\dZ}{\mathds{Z}}
\newcommand{\dist}{\text{dist}}
\newcommand{\R}{\text{R}}
\newcommand{\cB}{\mathcal{B}}
\newcommand{\cC}{\mathcal{C}}
\newcommand{\cS}{\mathcal{S}}
\newtheorem{theorem}{Theorem}[section]
\newtheorem{lemma}[theorem]{Lemma}
\newtheorem{corollary}[theorem]{Corollary}
\theoremstyle{definition}
\newtheorem{definition}[theorem]{Definition}
\theoremstyle{remark}
\newtheorem{remark}{Remark}[section]
\theoremstyle{remark}
\theoremstyle{remark}
\theoremstyle{remark}
\newtheorem{question}{Question}[section]
\theoremstyle{remark}
\theoremstyle{remark}
\begin{document}

\title{Lower Ricci Curvature and NonExistence of Manifold Structure}

\author{Erik Hupp, Aaron Naber and Kai-Hsiang Wang}

\date{\today}
\maketitle

\begin{abstract}
It is known that a limit $(M^n_j,g_j)\to (X^k,d)$ of manifolds $M_j$ with uniform lower bounds on Ricci curvature must be $k$-rectifiable for some unique $\dim X:= k\leq n = \dim M_j$.  It is also known that if $k=n$, then $X^n$ is a topological manifold on an open dense subset, and it has been an open question as to whether this holds for $k<n$.

Consider now any smooth complete $4$-manifold $(X^4,h)$ with $\Ric>\lambda$ and $\lambda\in \dR$. Then for each $\epsilon>0$ we construct a complete $4$-rectifiable metric space $(X^4_\epsilon,d_\epsilon)$ with $d_{GH}\Big(X^4_\epsilon,X^4\Big)<\epsilon$ such that the following hold.  First, $X^4_\epsilon$ is a limit space $(M^6_j,g_j)\to X^4_\epsilon$ where $M^6_j$ are smooth manifolds with $\Ric_j>\lambda$ satisfying the same lower Ricci bound.  Additionally, $X^4_\epsilon$ has no open subset which is topologically a manifold.  Indeed, for {\it any} open $U\subseteq X^4_\epsilon$ we have that the second homology $H_2(U)$ is infinitely generated.  Topologically, $X^4_\epsilon$ is the connect sum of $X^4$ with an infinite number of densely spaced copies of $\dC P^2$ .

In this way we see that every $4$-manifold $X^4$ may be approximated arbitrarily closely by $4$-dimensional limit spaces $X^4_\epsilon$ which are nowhere manifolds.  We will see there is an, as now imprecise, sense in which generically one should expect manifold structures to not exist on spaces with higher dimensional Ricci curvature lower bounds.
\end{abstract}

\tableofcontents

\section{Introduction}\label{s:Intro}

Let us begin with a historical discussion of measured Gromov-Hausdorff limit spaces 
\begin{align}\label{e:limit_spaces}
	(M^n_j,g_j,v_j,p_j)\to (X^k,d,\nu,p)\, ,\;\;\;\Ric_j> \lambda\, ,\;\;\nu_j := \frac{dv_{g_j}}{Vol(B_1(p_j))}\, ,
\end{align}
and their structure.  That metric space limits even exist in this context was the result of Gromov \cite[Th.~5.3]{Gro99}.  Structural results for the limits $X$ began in earnest with the almost rigidities of Cheeger-Colding \cite[Th.~6.62]{CheCol96}.  Their almost splitting theorem allowed them to show that $X$ was the union of rectifiable pieces of various dimensions \cite{CheCol97}, and they conjectured that the dimension is locally constant and hence unique.  Colding-Naber resolved this conjecture in \cite{ColNab12} and showed that the limit $X^k$ is $k$-rectifiable for a unique $k$.  More recently, an example of Pan-Wei \cite{PanWei22} has shown that while $X^k$ is $k$-rectifiable, its Hausdorff dimension might be larger than $k$.  More specifically, it is possible for the singular part of $X$ to have larger dimension with respect to the Hausdorff measure than it does with respect to the limit $\nu$-measure.\\

In the context where \eqref{e:limit_spaces} is noncollapsed, which is to say $\Vol(B_1(p_j))>v>0$, one can say quite a bit more.  In this case one has that $k=n$, and by volume convergence \cite[Th.~5.9]{CheCol97} the limit measure $\nu$ is the $n$-dimensional Hausdorff measure on $X$.  The starting point for a more refined analysis of $X$ in the noncollapsed context is another almost rigidity of Cheeger-Colding \cite[Th.~4.85]{CheCol96}.  This time one considers the monotone quantity $\theta_r(x) := \frac{\Vol(B_r(x))}{\omega_n r^n}$ and shows that in the limit $X$ it is a constant in $r$ iff $B_r(x)$ is a metric cone.  This opens the door to the techniques of Federer \cite{Fed70}, which have been applied to many nonlinear equations.  In particular, one can decompose $X=\text{Reg}(X)\cup \text{Sing(X)}$ into a regular and singular part and stratify the singular part $\cS^0(X)\subseteq \cdots\subseteq \cS^{n-1}(X)=\text{Sing(X)}$.  Cheeger-Colding were able to then use the Federer dimension reduction to prove the dimensional estimates $\dim \cS^\ell \leq \ell$.  More recently, the work of Cheeger-Jiang-Naber \cite{CheJiaNab21} was able to prove that $\cS^{\ell}(X)$ is ${\ell}$-rectifiable.  This result is sharp for ${\ell}\leq n-2$ by an example of Li-Naber \cite{LiNab20}, who built examples whose singular strata $\cS^{\ell}(X)$ are ${\ell}$-rectifiable, ${\ell}$-cantor sets.\\

For the regular set $\text{Reg}(X)$ of a noncollapsed limit one can say even more.  A Reifenberg type result of Cheeger-Colding \cite[Th.~5.14]{CheCol97} allows one to show that there is an open dense subset on which $X$ is a topological manifold.  In the case where the $M_j$ are boundary free it was was shown in \cite[Th.~6.1]{CheCol97} that $X$ is a manifold away from a codimension 2 set.  More recently, it was shown by Bru\' e-Naber-Semola \cite{BruNabSem22} that even in the boundary case one has a manifold structure away from a codimension 2 set.  That is, the top stratum of the singular set $\cS^{n-1}(X)$ is itself a manifold away from a codim 2 set, and thus $X$ is a topological manifold with boundary away from a codim 2 set.\\

\subsection{Main Result on Topological Structure}

It has remained an open question as to whether in the collapsed case $X$ needs to have a topological manifold structure on some open dense subset.  The main result of this paper is to answer this question in the negative:

\begin{theorem}\label{t:main}
Let $(X^4,h)$ be a smooth complete manifold with $\Ric_X>\lambda$, where $\lambda\in \dR$.  Then for every $\epsilon>0$ there exists a metric space $(X^4_\epsilon,d_\epsilon)$ such that
\begin{enumerate}
	\item $d_{GH}\Big(X^4,X_\epsilon\Big)<\epsilon$ ,
	\item $X^4_\epsilon$ is $4$-rectifiable ,
	\item There exist $(M^6_j,g_j)\stackrel{GH}{\longrightarrow} (X^4_\epsilon,d)$ with $\Ric_{g_j}>\lambda$ ,
	\item $\forall$ open set $U\subseteq X_\epsilon$ we have that the homology group $H_2(U)$ is infinitely generated.  Consequently, every open set $U\subseteq X$ is noncontractible and therefore not homeomorphic to Euclidean space.
\end{enumerate}
\end{theorem}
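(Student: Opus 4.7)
My strategy is to realize $X^4_\epsilon$ as the Gromov–Hausdorff limit of a sequence of smooth 6-manifolds $(M^6_j,g_j)$, each built from a collapsing 6-dimensional extension of $X^4$ by a large finite family of Ricci-preserving ``bubble surgeries'' at a dense set of centers. Each surgery will encode, in the $j\to\infty$ limit, a small scaled connect sum with $\dC P^2$, and the resulting $\dC P^1$ classes will populate $H_2(U)$ for every open $U$. To begin, I would fix a countable dense set $\{p_i\}\subset X^4$ and scales $r_i\downarrow 0$ decreasing so rapidly that the balls $B_{2r_i}(p_i)$ are pairwise disjoint and $\sum_i r_i\ll\epsilon$; these are the bubble centers.

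The core technical object is a one-parameter family of 6-manifolds with boundary $(P^6_\delta,h_\delta)$ satisfying: (i) $\Ric_{h_\delta}>\lambda'$ for some $\lambda'>\lambda$ uniform in $\delta$; (ii) a cylindrical end isometric to $[0,1]\times S^3\times F_\delta$, where $F_\delta$ is a fixed 2-dimensional Ricci-nonnegative fiber of diameter $\delta$ (a flat $T^2$ if $\lambda\le 0$, and a round $S^2$-type fiber for $\lambda>0$); and (iii) $P^6_\delta\to\dC P^2\setminus B^4$ in GH as $\delta\to 0$. The natural candidate is a fiber bundle over $\dC P^2\setminus B^4$ with fiber $F_\delta$, built out of the Hopf circle $S^5\to\dC P^2$ (plus a trivial $S^1$ factor in the $T^2$ case, or the associated 2-sphere bundle otherwise), equipped with a submersion metric combining Fubini–Study on the base with an invariant metric on the fibers; uniform Ricci positivity of the total space follows from positivity of the Fubini–Study Ricci tensor together with bounded O'Neill correction terms.

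To assemble $M^6_j$, I would start from a 6-dimensional collapsing extension of $X^4$, for example $X^4\times F_{\delta_j}$ with $\delta_j\downarrow 0$, which has $\Ric$ close to $\lambda$. For each $i\le N_j$ with $N_j\to\infty$, I would excise a neighborhood of $\{p_i\}\times F_{\delta_j}$ and glue in a rescaled copy of $P^6_{\delta_j}$ of macroscopic radius $r_i$, using a Perelman/Sha–Yang-style warped-product collar constructed so that the Ricci lower bound transfers across the seam with a loss tending to $0$ as the collar widens. By choosing $r_i,\delta_j,N_j$ carefully, both the cumulative Ricci loss is absorbed and $d_{GH}(M^6_j,X^4_\epsilon)\to 0$ with $d_{GH}(X^4_\epsilon,X^4)<\epsilon$. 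Rectifiability of $X^4_\epsilon$ then follows from the Colding–Naber dimension theorem applied to the 6-dimensional family. For the homology statement, each bubble provides an embedded class $[\dC P^1_i]\in H_2(X^4_\epsilon)$ nontrivial in the bubble neighborhood; pairing with Thom-type cohomology classes supported disjointly around each bubble (which persist from the $M^6_j$) yields linear independence in $H_2(U)$, and density of $\{p_i\}$ places infinitely many such bubbles in every open $U$.

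The main obstacle is the Ricci-preserving local construction together with the gluing: one must produce $P^6_\delta$ with a \emph{scale-invariant} lower Ricci bound and interpolate it into the collapsing background $X^4\times F_{\delta_j}$ with a seam loss that is summably small across countably many surgeries. This requires simultaneous control of the O'Neill submersion corrections as $\delta\to 0$ and of the warped-product collar geometry. The remaining tasks—GH precompactness, rectifiability, and the pairing argument for homological independence—are then comparatively routine, if quantitatively delicate, bookkeeping.
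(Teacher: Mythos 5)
Your high-level picture (densely placed small connect sums with $\dC P^2$, realized as bubbles in a collapsing $6$-dimensional extension with $2$-dimensional fibers, so that the limit is $4$-dimensional with small dense homologically nontrivial $2$-spheres) is the same as the paper's, but several of your concrete steps fail as stated. First, the very first set-up is impossible: if $\{p_i\}$ is dense in $X^4$ then for any fixed $k$ infinitely many $p_i$ lie inside $B_{2r_k}(p_k)$, so the balls $B_{2r_i}(p_i)$ can never be made pairwise disjoint, no matter how fast $r_i\downarrow 0$; a ``one-shot'' simultaneous gluing at a dense set of disjoint centers cannot exist. The construction has to be iterative, with each generation of surgeries performed inside regions already modified by earlier surgeries, and this forces exactly the bookkeeping you omit: control of the composed blow-down maps (uniform Lipschitz bounds near the bubbles, bi-Lipschitz bounds away from them, hence uniform H\"older limits) which is what makes the GH limit exist, gives the $4$-rectifiable charts, and lets one transfer homological nontriviality to every open $U\subseteq X^4_\epsilon$. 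On that last point, your ``pairing with Thom-type cohomology classes which persist from the $M^6_j$'' has no mechanism to act on chains in the limit metric space; the paper instead uses the continuous limit maps $\Phi_j:X^4_\epsilon\to X_j$ to push a hypothetical bounding $3$-chain forward into a smooth space where the sphere is known to be nontrivial. Also, Colding--Naber gives $k$-rectifiability for some $k$, not $k=4$; identifying the dimension again uses the bi-Lipschitz structure of the limit charts.

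Second, and more seriously, the central analytic mechanism for preserving $\Ric>\lambda$ across the neck is missing, and the substitute you name is precisely the one the paper explains cannot work: a Perelman/Sha--Yang-style $C^0$ collar gluing controls the jump in second fundamental form, which is a sectional-curvature technique; under only a Ricci lower bound the distributional curvature at the seam has no sign. The paper's solution is to first deform an annulus of the ambient space, written in exponential coordinates as $dr^2+r^2g_r$, into an exact warped cone $C(S^3_{1-\eps})\times_{\delta r^\alpha}S^2$: closing the cone angle costs curvature in the $S^3$ directions but gains $\sim\eps/r^2$ there, while the concave warping $f=\delta r^\alpha$ produces the compensating radial gain $-2f''/f\sim \alpha/r^2$, and the fiber directions survive only because the round $S^2$ contributes the huge positive term $1/f^2$. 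This last point also rules out your flat $T^2$ fiber for $\lambda\le 0$: with a flat fiber there is no $1/f^2$ term, the fiber Ricci in any such transition region is dominated by $-\Delta f/f\sim -\alpha/r^2\to-\infty$, and likewise the uniform lower bound on your bundle $P^6_\delta$ degenerates as $\delta\to 0$ since the vertical Ricci supplied by the O'Neill $A$-tensor scales away with the fiber. So beyond admitting the gluing is ``the main obstacle,'' the proposal does not contain the idea that actually overcomes it, nor the iterative framework needed to repeat it densely while retaining a limit space and its topology.
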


	We will see that it is possible to build many such $X^4_\epsilon$.  In short, for each countable dense subset $\cC=\{x_j\}\subseteq X$ and each collection of sufficiently decaying constants $\epsilon\geq \epsilon_j\to 0$ we will build $X_\epsilon$ by connect-summing $X^4$ with a $\dC P^2$ of size $\approx\eps_j$ at the collection of points $\cC$.  The topological picture will be similar to a complex algebraic blow up, where we replace a point $x_j$ with a $2$-sphere $S^2_{\eps_j}$, though it is important to note that this is purely topological and there is no complex structure being preserved in this process.  In particular, the geometric properties of the {\it blow down} map that sends the newly added $S^2_{\eps_j}$ to the chosen $x_j$ will be important to the construction.  These blow down maps will explain how the rectifiable charts of the space $X^4_\epsilon$ collapse each of our added $2$-spheres to points, which form a set of measure zero.  Most examples of rectifiable structures which are not manifolds arise by allowing for holes in the space.  The blow down picture here explains the ability to build a rectifiable space which is nowhere a manifold, but also has no holes.  By choosing these points $\{x_j\}$ and scales $\epsilon_j$ fairly freely we see that a smooth structure is actually quite hard to obtain under higher dimensional lower Ricci curvature bounds, and in a certain generic sense we should expect limit spaces to not have manifold structures. \\

The above raises the question about whether if we assume bounds on topology we might obtain more.  There are two versions of such a question: 

\begin{question}
Let $(M^n_j,g_j,\nu_j,p_j)\stackrel{mGH}{\longrightarrow}(X^k,d,\nu,p)$ where $\Ric_j\geq \lambda$, $\nu_j := \frac{dv_{g_j}}{Vol(B_1(p_j))}$ and such that $|H_*(M_j,\dZ)|<A<\infty$.  Then is $X^k$ a topological manifold on an open dense subset?
\end{question}

Here, $|H_*(M_j,\dZ)|$ refers to the number of generators for the abelian group in question. We can phrase the question directly on the metric-measure space itself in a very similar form:

\begin{question}
	If $(X^k,d,\nu)$ is an $RCD(n,\lambda)$ space with bounded homology $|H_*(X,\dZ)|<\infty$, then is $X$ a topological manifold on an open dense subset?
\end{question}

Rephrasing the above: is the nonmanifold structure in Theorem \ref{t:main} only possible in the presence of infinitely generated homology?  

\vspace{.3cm}

\subsection*{Acknowledgments}

The second author was funded by National Science Foundation Grant No. DMS-1809011 for much of this work.

\vspace{.5cm}

\section{Geometric Outline of Construction}

Let us turn our attention to the construction of the smooth manifolds $(M^6_j,g_j) := (X^4_j\times S^2, h_j+f_j^2 g_{S^2})$.  We will often write $X_j\times_{f_j} S^2$ to represent that we are geometrically considering the product of two spaces with a warping factor $f_j$.  We can view $X^4_j$ as the blow up of $X^4$ at an increasingly dense sequence of points.  That is, to construct $X^4_j$ we will effectively take a collection of points $\{x^a_j\}\subseteq X^4$ and replace these points with $2$-spheres.  Each time we blow up a point, we are introducing a new noncontractible $S^2$ into the space.  Geometrically, each $S^2$ being introduced will be of size at most $\epsilon$, but their sizes will decrease quickly as the sequence continues.  We will additionally alter the warping factor on the $S^2$ factor of $M_j^6$ in order to preserve the strict Ricci curvature lower bound.  As our collection of blow up points becomes dense, we will arrive at our limit $X^4_\epsilon$.\\

Our construction will be inductive.  That is, given $M^6_j$ which satisfies a handful of inductive properties, we will explicitly construct from it $M^6_{j+1}$ with similar inductive properties.  Our goals in this section will first be to prove Theorem \ref{t:main} given our inductive sequence, which we will do in Section \ref{ss:main_thm_proof_induction}.  We will then focus on the inductive construction itself, which will be broken down into steps.  Each step will consist of a main Inductive Lemma.  The Inductive Lemmas will be proved later in the paper, but in the meantime we will finish the inductive construction in Section \ref{ss:geom_construction:step3} based on these Lemmas.  As such, our goal for this Section is to complete the proof of Theorem \ref{t:main} modulo the proof of the Inductive Lemmas.\\

In order to state the conditions of our inductive construction, let us introduce the correct notion of regularity scale for this paper. 

\begin{definition}[Regularity Scale]
	Let $(M^n,g) = (X^{n-2}\times S^2,h+f^2g_{S^2})$ be a smooth manifold with $x\in X$, and $0< \eta <1$ a constant.  Then we define the regularity scale
\begin{align}
	r_x=r^\eta_x := \max_{0<r\leq 1 }\Big\{\inj_X(x)\geq r \text{ and } \sup_{B_r(x)}\sum_{0\leq k\leq 2}\Big(r^{2+k}|\nabla^{k}\Rm_h|+r^{1+k}|\nabla^{1 + k}\ln f|\Big) \leq \eta\Big\}\, .
\end{align}
\end{definition}
\begin{remark}
	The definition depends on a constant $0<\eta<1$, though this constant may be fixed somewhat arbitrarily.  Pictorially, when $\eta$ is small we are increasingly close to $\dR^{n - 2} \times S^2$ with a product metric.
\end{remark}
\begin{remark}
	It follows that if $r_x>2r$ then topologically $B_r(x)\subseteq X$ is contained in a Euclidean ball.  
\end{remark}

\begin{remark}	
	If $r_x>2r$ then we can write the metric $h$ in exponential coordinates on $B_r(x)$ so that $h_{ab}$ is $C^{2}$ close to the identity and $\ln f$ is $C^{3}$ close to a constant.  See Lemma \ref{l:exponential_regularity}.
\end{remark}
\vspace{.2cm}

Now let $(X^4,h)$ be our chosen manifold from Theorem \ref{t:main} with $\Ric_X>\lambda$.  We will go ahead and assume $X^4$ is compact, but it will be clear that this is not needed as all constructions are purely local.  Primarily, this allows us to discuss the construction in terms of some global parameters instead of choosing them locally.  Let us define
\begin{align}\label{e:construction_outline:Ricci_lower}
	\lambda^+ := \max\big\{\lambda: \Ric_X[v,v]\geq \lambda|v|^2\big\}\, ,
\end{align}
and observe that as $X$ is compact we have $\lambda^+>\lambda$ .  There is no loss in generality in us then assuming that the constant $\epsilon>0$ from Theorem \ref{t:main} satisfies
\begin{align}
	\epsilon < \frac{1}{2}(\lambda^+-\lambda)\, .
\end{align}

Our inductive construction of $(M^6_j,g_j)$ will produce a choice of parameters
\begin{align}
	r_j := \frac{1}{2^j}\, ,\;\; \delta_j\leq \delta^{1+j}\, ,\;\; \epsilon_j := \frac{\epsilon}{2^j}<\epsilon\, ,\;\;\lambda_j := \lambda^+-\sum_{k =1}^j \epsilon_k>\lambda\, ,
\end{align}

where $0<\delta<<1$ will later be chosen sufficiently small. We will let our base step of the induction be represented by the space $M^6_0 := X^4\times S^2$ with product metric $g_0 = h + \delta^2 g_{S^2}$.  Our inductive assumptions will be the following:\\

\begin{enumerate}
\item[(I1)]  We can write $M^6_j=X^4_j\times S^2$ with metric $g_j = h_j+f_j^2 g_{S^2}$ satisfying $\Ric_{g_j}\geq \lambda_j$.  The space $(X^4_j,h_j)$ and warping function $f_j:X_j\to \dR^+$ are smooth with $|f_j|<\delta_j r_j$.
\item[(I2)] We have a smooth mapping $\phi_j: X_j \to X_{j - 1}$ and a maximal disjoint collection $\{B_{r_j}(x^a_{j})\subseteq X_j\}_a$ subject to the conditions:
\subitem(I2.a) $\{x_j^a\}_a \subseteq\{x\in X_j : r_x\geq 4r_{j}\}$ , 
\subitem(I2.b) $\phi_{ji}\big(B_{4r_j}(x^a_j)\big)\cap \{x^b_i\}=\emptyset$, where $\phi_{ji} :=\phi_{i + 1}\circ\cdots\circ \phi_{j}:X_j\to X_i$ for $i < j$ . 
\item[(I3)] The mapping $\phi_{j}:X_{j}\to X_{j - 1}$ is a diffeomorphism away from $\phi_{j}^{-1}(x^a_{j - 1})\cong S^2$, an isometry away from $\phi_{j}^{-1}(B_{r_{j -  1}}(x^a_{j-1}))$, and $\phi_{j}^{-1}(B_{r_{j -  1}}(x^a_{j-1}))$ is diffeomorphic to the generating line bundle $E\to S^2$ .  
\item[(I4)] $\phi_{j}^{-1}(x^a_{j-1})\cong S^2$ are totally geodesic, round $2$-spheres of radius $\leq \delta_{j}r_j$ in $X_{j}$ .  We have $|D \phi_{j}|<C(6)$,  and if $x,y\in X_{j}$ with $d(x,y)>\delta_{j}r_j$ then $(1-\delta_{j})d(x,y)<d(\phi_{j}(x),\phi_{j}(y))<(1+\delta_{j})d(x,y)$.
\end{enumerate}

\vspace{.3cm}

The notation $C(6)$ above tells us that $|D\phi|$ is uniformly bounded by a dimensional $n=6$ constant, which in this case is just a uniform constant. \\

Let us discuss some of the above properties and their implications. Condition (I1) tells us that from a geometric standpoint $M_j^6$ is globally a warped product over $X^4_j$, and that geometrically the $S^2$ factor is disappearing in the limit.  Condition (I2) is an enumeration of the points we will be doing surgery around to move from $X_j$ to $X_{j+1}$ .  The important point to observe is that necessarily this set is becoming increasingly dense by Condition (I2.a), as the points are maximal subsets inside the set whose regularity scale is too large.\\

To move from $X_j$ to $X_{j+1}$ we will be performing surgeries on the balls $B_{r_j}(x^a_j)$.  Condition (I3) is telling us that the surgery is topologically a connect sum with $\dC P^2$, where we are replacing each $x^a_j$ with a $2$-sphere.  Near $x^a_j$ this has the effect of replacing the diffeomorphic ball $B_{r_j}(x^a_j)$ with the total space $E\to S^2$ of the generating line bundle over $S^2$.  Note that the unit sphere bundle in $E$ is $S^3$, and hence this is the right object for gluing.  From a topological perspective, moving from $X_j$ to $X_{j+1}$ adds a second homology generator for each $x^a_j$.  Condition (I2.b) is telling us that our surgeries never intersect the previously added $2$-spheres.\\

Condition (I4) is explaining the geometric properties of our blow down maps $\phi_j:X_j\to X_{j-1}$.  These will each be smooth mappings, indeed uniformly lipschitz, and will be Riemannian isometries away from some small neighborhoods of the blow up points.  We will see that the limit map $\Phi:= \lim_{j \to \infty}\phi_{j1}:X^4_\epsilon\to X^4$ is uniformly H\"older and locally bi-Lipschitz away from a set of measure zero.  In particular we will have a single rectifiable chart of $X^4_\epsilon$ over $X^4$, that is an a.e.\@ defined locally bi-Lipschitz map onto a full-measure subset of a smooth manifold.  The blow up 2-spheres will all be collapsed to single points under this mapping.

\subsubsection{Proving Theorem \ref{t:main} given the Inductive Spaces $M^6_j$}\label{ss:main_thm_proof_induction}

Before focusing on the inductive construction itself, let us see how to use (I1)-(I4) in order to finish the proof of Theorem \ref{t:main}.\\

Let us begin by studying properties of the spaces $X^4_j$.  For each $i<j$ let us denote $\{S^2_{jia}\}_{a} := \{\phi_{ji}^{-1}(x^a_i)\}_{a}$.  Notice by conditions (I4) and (I2.b) that these are disjoint totally geodesic round $2$-spheres inside of $X^4_j$.  Additionally, by (I3) and a Mayer-Vietoris sequence we have that
\begin{align}
	\pi_1(X^4_j) = \pi_1(X^4) \text{ with }H_2(X^4_j)= H_2(X^4)\oplus\bigoplus_{i < j\,,\, a} \langle[S^2_{jia}] \rangle\, .
\end{align}
In particular, the rank of the second homology is growing in step with these two spheres.\\

Let us now flesh out the geometric properties implied by (I4) more completely.  From (I4), we expect the lipschitz constant to be uniformly bounded, but not necessarily close to 1.  On the other hand, (I4) also tells us that the lipschitz constant is close to 1 away from a small neighborhood of the diagonal in $X^4_j \times X^4_j$.  Consequently, we have the bounds
\begin{align}\label{e:phi_lipschitz_properties}
	&\text{If $x,y\in X_{j}$ with $d(x,y)>\delta_{j}r_j$ then $(1-\delta_{j})d(x,y)<d(\phi_{j}(x),\phi_{j}(y))<(1+\delta_{j})d(x,y)$}\, ,\notag\\
	&\text{If $x,y\in X_{j}$ with $d(x,y)\leq \delta_{j}r_j$ then $d(\phi_{j}(x),\phi_{j}(y))<C d(x,y)$ for $C=C(6)$ .}
\end{align}

Thus we can take the lipschitz constant to be small if the distance between two points is also not too small.  This in particular implies the much weaker estimate 
\begin{align}\label{e:phi_GH_map}
	\text{ $\phi_j$ is a $C\delta_j \leq C2^{-j}\delta $-Gromov Hausdorff map}\, .
\end{align}
By composing, we see that $X^4_j$ is always $C\delta<\epsilon$-GH close to $X^4$ for small enough $\delta$ .

The importance of these estimates is that our real goal is to obtain uniform continuity for the maps $\phi_{ji}:X_j\to X_i$ from (I2.b).  It is too much to ask that they be uniformly lipschitz.  However for $\delta<\delta(C)=\delta(6)$ in the construction, condition (I4) will now allow us to show that the $\phi_{ji}$ are uniformly $C^\alpha$ maps.  Indeed, for any fixed $x,y\in X_j$ write $r :=d(x,y)$, and then using \eqref{e:phi_lipschitz_properties} we can estimate
\begin{align}
	d\big(\phi_{ji}(x),\phi_{ji}(y)\big) &\leq \prod_{k \leq j \,:\, r\leq \delta_kr_k}C\cdot \prod_{k \leq j\,:\, r>\delta_kr_k}(1+\delta_k)r\, ,
\end{align}

To estimate the above we will use that $\delta_j\leq \delta^{1+j}$, which gives us that
\begin{align}\label{e:phi_Holder}
	d\big(\phi_{ji}(x),\phi_{ji}(y)\big) &\leq (1+\delta)C^{\ln (r)/ \ln(\delta/2)} r = (1+\delta) r^{\alpha(\delta)}\, ,
\end{align}

where $\alpha(\delta)=1+\ln (C)/\ln(\delta/2)\to 1$ as $\delta\to 0$ . \\

Now recall by \eqref{e:phi_GH_map} that $\phi_j$ is a $C\delta_j\leq C2^{-j}\delta$-Gromov Hausdorff map.  Consequently we have that $\phi_{ji}:X_j\to X_i$ is a $C\sum_{j\geq k\geq i+1}\delta_k \leq C 2^{-i}\delta$ Gromov Hausdorff map.  This tells us that $\{X_j\}$ is a Cauchy sequence and so
\begin{align}
	(X_j,d_j)\stackrel{GH}{\longrightarrow} (X_\epsilon,d)\, .
\end{align}
Note that this is not a subsequential convergence but an actual convergence by the Cauchy condition.  It follows from (I1) that $M_j\stackrel{GH}{\longrightarrow} X^4_\epsilon$ as well since $|f_j|\leq \delta_j r_j\to 0$.  As the maps $\phi_{ji}$ witness the GH-Cauchy condition, we can take limits
\begin{align}
	\lim_{j\to\infty}\phi_{ji}:= \Phi_i:X^4_\epsilon\to X_i\, ,
\end{align}
where the $\Phi_i$ are also $C2^{-i}\delta$ Gromov Hausdorff maps.  It follows from \eqref{e:phi_Holder} that the $\Phi_i$ are $C^{\alpha}$-H\"older maps\footnote{In fact, the $\Phi_i$ are rectifiable charts. Indeed, the $\phi_j$ are $(1 + \delta_j)$-bi-Lipschitz away from the bubbles $\phi_j^{-1}(B_{\delta_{j - 1}r_{j-1}}(x_{j-1}^a))$, and uniformly locally bi-Lipschitz away from the added $2$-spheres. Composing these estimates as in \eqref{e:phi_Holder} shows that for any $j > 0$, the $\Phi_i$ are $C(i,j)$-bi-Lipschitz away from $\bigcup_{k > i,a}B_{r_k\delta_{j\wedge k}}(S^2_{ka})$.}, or more precisely that
\begin{align} \label{e:phi_Holder_conclusion}
	d\big(\Phi_i(x),\Phi_i(y)\big) \leq (1+\delta) d(x,y)^\alpha\, .
\end{align}
Importantly, we have that the $\Phi_i$ are continuous maps.\\

Now recall from (I3) and (I4) that $\{S^2_{jia}\}\subseteq X^4_j$ are totally geodesic round $2$-spheres in $X^4_j$.  Note also that for $i<j<k$ we have by (I2.b) and (I3) that $\left.\phi_{kj}\right|_{S^2_{kia}}:S^2_{kia}\to S^2_{jia}$ is an isometry.  Consequently, we can limit our sequences of $2$-spheres to get round $2$-spheres
\begin{align}
	\lim_{j\to\infty}\{S^2_{jia}\subseteq X_j\} = \{S^2_{ia}\subseteq X^4_\epsilon\}\, .
\end{align}
Note that $\left.\Phi_j\right|_{S^2_{ia}}:S^2_{ia}\to S^2_{jia}$ is an isometry for every $j>i$ , and in particular the radius of each $2$-sphere $S^2_{ia}$ is at most $r_i\delta_i$.  Now we claim that each $S^2_{ia}\subseteq X^4_\epsilon$ is a nontrivial generator in the second homology group as well.  Indeed, assuming this is not the case, there must be a continuous $3$-chain $\psi:\Delta^3\to X^4_\epsilon$ with boundary $\partial\psi = S^2_{ia}$.  If we compose with $\Phi_j$ then this gives us a continuous chain $\Phi_j\circ \psi:\Delta^3\to X^4_j$ whose boundary is the $2$-sphere $S^2_{jia}$.  However, as we know that $S^2_{jia}$ is a nontrivial generator in the second homology group, this is not possible.  A similar argument shows that each $\{S^2_{ia}\}$ generates an independent factor in the second homology group. \\

Finally, let us show more carefully that the $2$-spheres $\{S^2_{ia}\}_{i,a}$ are dense in $X_\epsilon$. Fix any $x\in X_\epsilon$ and $\epsilon'>0$. Consider $\Phi_i(x)\in X_i$ for some large $i$ such that $\Phi_i$ is an $\epsilon'$-GH map. We \textit{claim} that we can find $x^a_j\in X_j$ for some $j\geq i$ such that $d(\phi_{ji}(x^a_j), \Phi_i(x))<\epsilon'$. Since $\Phi_i=\phi_{ji}\circ \Phi_j$, it is then clear that the $2$-sphere $S^2_{ja}=\Phi^{-1}_j(x_j^a)$ is contained in the ball $B_{2\epsilon'}(x)$. \textit{Proof of claim:} Suppose it is not true. Since $\phi_{ji}$ is a $C2^{-i}\delta$-Gromov Hausdorff map, we then have $x^a_j\not\in B_{\epsilon'-C2^{-i}\delta}(\Phi_j(x))$ for any $j\geq i$ and any blow-up point $x^a_j$. Thus the ball $B_{\epsilon'-C2^{-i}\delta-r_i}(\Phi_j(x))$ has the same Riemannian metric for all such $j$. In particular, $\Phi_j(x)$ will have the same regularity scale for all such $j$ (note that the regularity scale is invariant under scaling of the warping function $f$). Now for $j$ large enough, a point in $B_{\epsilon'-C2^{-i}\delta-r_i}(\Phi_j(x))$ has to be blown-up since the collection of blow-up points is maximal. This is a contradiction. \\

  Thus we have our limit space $X^4_\eps$ and a dense collection of two spheres $\{S^2_{ia}\}$ which are all generators in the second homology group, as claimed.  This finishes the proof of Theorem \ref{t:main} under the assumption that we have built our inductive sequence $M_j^6$ satisfying (I1)-(I4).   $\qed$

\vspace{.3cm}

\subsection{Step 1:  The Gluing Block $\cB(\epsilon,\alpha,\delta)$}

In order to prove Theorem \ref{t:main} we are therefore left with showing how to build $M_{j+1}=X_{j+1}\times_{f_{j + 1}} S^2$ from $M_j=X_j\times_{f_j} S^2$ in the inductive construction.  The first step of the construction will build what is our main gluing block.  When we move from $X_j$ to $X_{j+1}$ we will take our appropriately dense collection of points $\{x^a_j\}$ and replace a small neighborhood of each with our gluing block $\cB$.  From a topological perspective, it will be a connect sum with a copy of $\dC P^2$ near each $x^a_j$, so that we are blowing up the points $\{x^a_j\}$ and replacing them with $2$-spheres.\\

Our main constructive Lemma in this step of the construction is the following:\\

\begin{lemma}[Inductive Step 1]\label{l:inductive_step1}
	For every $0<\epsilon<\frac{1}{10}$, $0<\alpha<\alpha(\epsilon)$ and $0<\delta<\delta(\epsilon,\alpha)$ there exists a smooth Riemannian manifold $\cB(\epsilon,\alpha,\delta)$ with   $\Ric_{\cB}>0$ and such that
\begin{enumerate}
	\item $\cB$ is diffeomorphic to $E\times S^2$, where $E$ is the total space of the generating line bundle $E\to S^2$.  Further, $(\cB,g)$ has a warped product structure $g:= g_E+f^2 g_{S^2}$.
	\item There exists $U=U_E\times S^2\subseteq \cB$ such that $\cB\setminus U$ is isometric to a neighborhood of infinity in $C(S^3_{1-\epsilon})\times_{\delta r^\alpha} S^2$ with the metric $dr^2+(1-\epsilon)^2 r^2 g_{S^3}+\delta^2 r^{2\alpha} g_{S^2}$ .
\end{enumerate} 
Further, there exists $\phi:(E,g_E)\to C(S^3_{1-\epsilon})$ such that
\begin{enumerate}
	\item $\phi$ is a diffeomorphism away from the cone point $0\in C(S^3)$, with $\phi^{-1}(0)\cong S^2$ an isometric sphere, 
	\item $|D \phi|\leq C$ is uniformly bounded with $\phi$ an isometry away from $U_E$ .
\end{enumerate}
\end{lemma}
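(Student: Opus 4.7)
The plan is to build $\cB$ as a doubly-warped product: first a smooth Riemannian $4$-manifold $(E,g_E)$ that desingularizes the cone $C(S^3_{1-\epsilon})$ near its tip and coincides with it outside a compact set $U_E \subseteq E$, then the $6$-manifold $\cB = E \times S^2$ with warped metric $g_E + f^2 g_{S^2}$, where $f\colon E \to \RR^+$ equals $\delta r^\alpha$ outside $U_E$. The engine of the construction is the strictly positive Ricci curvature of the cone $C(S^3_{1-\epsilon})$ in the link directions, which provides the slack both to smooth $g_E$ across the zero section and to absorb the negative contributions coming from the Hessian and Laplacian of $f$.

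For the $4$-manifold, the total space $E$ of the generating line bundle $\cO(-1) \to \dC P^1$ carries a cohomogeneity-one $U(2)$-action whose principal orbits are Berger $3$-spheres, so in tubular coordinates $r$ from the zero section I write
\[ g_E = dr^2 + a(r)^2 g_B + b(r)^2 \theta^2, \]
with $g_B$ the round metric on the horizontal $\dC P^1$ base and $\theta$ the vertical connection $1$-form for the Hopf fibration. I choose $a, b$ smooth on $[0,\infty)$ with $a(0) = s_0 > 0$, $b(0) = 0$, $b'(0) = 1$, and the standard parity conditions at the origin so that $g_E$ extends smoothly and the zero section is a totally geodesic round $2$-sphere of radius $s_0$. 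Outside a compact interval $[0, r_0]$ I set $a(r) = b(r) = (1-\epsilon)r$, which turns the Berger sphere into the round $S^3$ of radius $(1-\epsilon)r$ and makes $g_E$ agree with $dr^2 + (1-\epsilon)^2 r^2 g_{S^3}$.

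To verify Ricci positivity of $g = g_E + f^2 g_{S^2}$, I use the warped-product formulas
\[ \Ric_g(X,Y) = \Ric_{g_E}(X,Y) - \tfrac{2}{f}\Hess_{g_E} f(X,Y), \quad \Ric_g(V,V) = \tfrac{|V|_g^2}{f^2}\bigl(1 - f\Delta_{g_E} f - |\nabla f|^2\bigr) \]
for $X,Y \in TE$ and $V \in TS^2$. On the cone region $\{r \geq r_0\}$, $\Ric_{g_E}$ vanishes along $\partial_r$ and equals $\tfrac{2(2\epsilon-\epsilon^2)}{(1-\epsilon)^2 r^2}$ in the link directions; a direct computation for $f = \delta r^\alpha$ on the cone gives $\tfrac{2}{f}\Hess_{g_E} f$ equal to $\tfrac{2\alpha(\alpha-1)}{r^2}$ radially and $\tfrac{2\alpha}{r^2}$ in the link directions, while $f\Delta_{g_E} f + |\nabla f|^2 = 2\alpha(\alpha+1)\delta^2 r^{2\alpha-2}$. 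Positivity of the link Ricci forces $\alpha < (2\epsilon - \epsilon^2)/(1-\epsilon)^2$, fixing the threshold $\alpha(\epsilon)$, and positivity of the vertical Ricci at $r = r_0$ forces $\delta^2 r_0^{2\alpha-2} < 1/(2\alpha(\alpha+1))$, fixing $\delta(\epsilon,\alpha)$. On the cap $\{r \leq r_0\}$ I extend $f$ smoothly so that it equals a small positive constant near the zero section (making $\Hess f$ and $\Delta f$ vanish there) and interpolates monotonically to $\delta r^\alpha$, and I pick $a, b$ in the interpolation layer using the standard Berger-sphere Ricci formulas; keeping this layer thin relative to the positivity of $\Ric_{g_E}$ on the cap leaves room to absorb the remaining cross terms.

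The blow-down $\phi \colon E \to C(S^3_{1-\epsilon})$ is defined orbit-by-orbit: on $\{r \geq r_0\}$ it is the identity in $(r, S^3)$ coordinates, and on $\{r < r_0\}$ it maps each Berger $3$-sphere orbit at radius $r$ to the round $3$-sphere of radius $(1-\epsilon)r$ in the cone via the standard Hopf collapse, sending the zero section $S^2 \subseteq E$ to the cone tip. The map is an isometry outside $U_E$, a diffeomorphism off the zero section (since $b(r) > 0$ there), and uniformly Lipschitz by the control on $a, b$ and their derivatives. The principal obstacle is the coordinated choice of the three profiles $a, b, f$ inside the cap: each warping contribution wants to add positively to $\Ric_g$, but their cross-terms in the warped-product formulas compete, and the hierarchy $\delta \ll \alpha \ll \epsilon$ required in the lemma is precisely what makes all three contributions combine into strictly positive Ricci throughout $\cB$.
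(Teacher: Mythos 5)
Your construction of the asymptotic regime (the warped cone $C(S^3_{1-\epsilon})\times_{\delta r^\alpha}S^2$, the constraint $\alpha\lesssim\epsilon$ from the link directions, the smallness of $\delta$) matches the paper's Step 1.4, but the heart of the lemma is the cap, and there your plan has a genuine gap. You propose to first build a $4$-manifold $(E,g_E)$ that is isometric to $C(S^3_{1-\epsilon})$ outside a compact set $U_E$, with the Berger-sphere profiles $a,b$ opening up to slope $1-\epsilon$ inside a bounded interval $[0,r_0]$ while $f$ is essentially constant near the zero section. If you intend $\Ric_{g_E}\geq 0$ on $E$ itself, this object does not exist for small $\epsilon$: by Bishop--Gromov the volume ratio of such a metric is $\geq(1-\epsilon)^3$ at every scale, and almost-Euclidean volume growth with $\Ric\geq0$ forces $E$ to be homeomorphic to $\dR^4$ (Perelman, Cheeger--Colding), contradicting $H_2(E)=\dZ$. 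This is exactly why the paper's Step 1.1 only produces a bubble asymptotic to a \emph{very narrow} cone $C(S^3_m)$, $m\leq 10^{-2}$, rather than to $C(S^3_{1-\epsilon})$.

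If instead you only ask for $\Ric>0$ on the total space $\cB=E\times_f S^2$ and hope the warping term absorbs the bad radial curvature in a ``thin layer,'' the numbers do not close. With $f=\delta r^\alpha$ (or anything comparably flat), the only positive radial contribution is $-2f''/f\approx 2\alpha(1-\alpha)r^{-2}$, so $\Ric_{rr}>0$ forces the cone-angle profile $h$ to satisfy $h''\lesssim \alpha\, h\, r^{-2}\lesssim \alpha/r$; integrating, raising $h'$ from its value at the end of the cap to $1-\epsilon$ costs at least a definite amount $\approx 1$, hence requires $\alpha\ln r_3\gtrsim 1$, i.e.\ the flattening must be spread over an exponentially long neck $r_3\geq e^{c/\alpha}$ and cannot be done in a compact layer with $f$ nearly constant there. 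This is precisely the mechanism of the paper's Steps 1.2--1.3: one first attaches a polynomially growing warping factor $f_2=\delta_2(1+r^2)^{\alpha_2/2}$ to the narrow-cone bubble, and only then uses the resulting radial Ricci surplus to increase the slope at the logarithmic rate $0\leq rh_3''\leq 10/\ln r_3$ out to a huge radius $r_3(m,\alpha_2,\epsilon)$, before converting the warping to $\delta r^\alpha$ and smoothing the $C^1$ breaks. Your outline, as written, skips the step that makes the lemma nontrivial.
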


Note that $\cB\setminus U$ looks very pGH-close to $\dR^4\times S^2$, where the degree of closeness is being measured by $\epsilon,\alpha, \delta$ in a quantitative manner.

\vspace{.3cm}

\subsection{Step 2:  Adding Cone Singularities}

The second step of the construction involves changing the geometry near each $x^a_j$ so that the gluing blocks $\cB$, which have a very specific geometry at infinity, may be isometrically glued along an annulus into $X_j$.  Let us begin with a broader discussion before stating the main constructive Lemma.\\

Let us start with a discussion of the density of singularities.  It is known, see for instance \cite[Ex.~2]{OtsShi94}, that one can build examples $(X,h)$ with $\Ric_h>0$ for which the singular set is dense.  In fact, the example in \cite{OtsShi94} has positive sectional $\text{sec}_h>0$ and even full positive curvature operator $\Rm_h>0$.  What is at first counter-intuitive is that the constructions of \cite{OtsShi94} not only produce but essentially rely on these stronger curvature conditions.  That is, the construction of singularities with $\Rm_h>0$ in \cite{OtsShi94} is very analogous to the construction of convex functions with nonsmooth points.\\

Let us now consider what is almost the reverse direction.  Begin with a smooth space $(X,h)$ with some form of lower bound on the curvature and ask about adding cone singularities near any point $x\in X$ without destroying the lower curvature bound.  If for instance $\text{sec}_h> \lambda>0$ then one can accomplish this by performing a $C^0$ gluing in the spirit of \cite[Sec.~4]{Per97}.  Namely, one can remove a sufficiently small neighborhood of $x\in X$ and isometrically glue in a rescaled spherical suspension of the boundary.  There will be what is essentially distributional curvature added along the gluing, but the $\text{sec}_h> \lambda$ assumption will allow us to guarantee that these distributional curvatures all have the right sign.  In particular, one can smooth near the gluing region and preserve the $\text{sec}_h>\lambda>0$ condition. \\ 

The procedure described above of adding cone singularities near any point does not work if we are only assuming $\Ric_h>\lambda$.  In short, the distributional curvature added from the $C^0$ gluing is due to the difference in second fundamental forms of the boundary on the two sides of the gluing.  This second fundamental form in turn is closely related to sectional curvature, and Ricci curvature control is not sufficient.  It turns out that we need to exploit better the local geometry near $x\in X$ in order to control the Ricci curvature.  \\

In Step 2.1 of Section \ref{s:step2} we will see how to resolve this problem and add such cone singularities to arbitrary spaces satisfying $\Ric_h>\lambda$ without destroying the lower Ricci curvature bound\footnote{The construction in Section \ref{ss:step2:1} is a bit more general; one can drop the warping factor to obtain the claimed result.}.  The inductive Lemma of this Step of the construction is a generalization of this discussion and will allow us to also add conical singularities with a fixed warping structure near every point.  This extra warping control is necessary in order to use the inductive Lemma of Step 1 to add our desired topology.  We will focus on the $6$-dimensional case $M=X^4\times S^2$ of interest, though it is clear dimension is not a relevant constraint.  Precisely, we have the following local gluing Lemma, which focuses on a ball with controlled regularity scale:\\

\begin{lemma}[Inductive Step 2]\label{l:inductive_step2}
	Consider a warped product space $(B_2^4(p) \times S^2,g)$ with metric $g = g_B + f^2g_{S^2}$ satisfying $\inj_{g_B}(p) > 2$, $\Ric_g>\lambda$, and
\begin{align}
	|\Rm_{g_B}|\, ,\, |\nabla\Rm_{g_B}|\, ,\, |\nabla^2\Rm_{g_B}|\, ,\, |\nabla\ln f|\, ,\, |\nabla^2\ln f|\, ,\, |\nabla^3 \ln f| < \eta<1\, .
\end{align}
Write $r:= \dist_{g_B}(\cdot, p)$. Then for all choices of parameters $0 < \eps < \eps(|\lambda|)$, $0<\alpha < \alpha(\eps)$, $0 < \hat{r} < \hat r(\alpha, \lambda,\eps)$, and $0 < \hat{\delta} < \hat{\delta}(\lambda,\alpha,\epsilon,||f||_{L^\infty},\hat{r})$, there exists $0 < \delta = \delta(\,\hat{\delta}\|f\|_\infty\mid \alpha, \lambda, \eps)$ and a warped product metric $\hat g = \hat{g}_B+\hat f^{\,2} g_{S^2}$ such that:
\begin{enumerate}
	\item The Ricci lower bound $\Ric_{\hat g}>\lambda - C(4)\eps$ holds for $\hat r/2 \leq r \leq 2$\, ,
	\item $\hat g = g_B+\hat{\delta}^2 f^2 g_{S^2}$ is unchanged up to scaling the warping factor $f$ by $\hat \delta$ for $1 \leq r \leq 2$ ,
	\item $\hat g = dr^2+(1-\eps)^2 r^2 g_{S^3}+\delta^2 r^{2\alpha} g_{S^2}$ has the cone warping structure $C(S^3_{1 - \eps}) \times_{\delta r^\alpha}S^2$ for $r\leq \hat r$ ,
	\item The identity map $\Id:(B_2(p),\hat{g}_B)\to (B_2(p),g_B)$ is $(1+2\epsilon)$-bi-Lipschitz.
\end{enumerate}
\end{lemma}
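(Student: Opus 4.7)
The plan is to construct $\hat g = \hat g_B + \hat f^{\,2} g_{S^2}$ by interpolating the base metric and warping factor on the annulus $\hat r \leq r \leq 1$, and then verify the Ricci lower bound via the warped-product Ricci formula. Using $\inj_{g_B}(p) > 2$ together with the $C^2$ control on $\Rm_{g_B}$, I first pass to exponential polar coordinates at $p$ (invoking Lemma~\ref{l:exponential_regularity}), writing
\begin{align*}
g_B = dr^2 + r^2 \tilde h(r, \theta), \qquad \tilde h(r, \theta) - g_{S^3} = O(\eta r^2),
\end{align*}
with corresponding $C^2$ bounds in $(r, \theta)$.

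I design the new base as $\hat g_B = dr^2 + A(r)^2 h(r, \theta)$, where $A(r) = r c(r)$ with $c$ smooth, equal to $1$ on $\{r \geq 1\}$ and to $1 - \eps$ on $\{r \leq \hat r\}$; the transition of $c$ is concentrated in a range where $A$ stays bounded below away from zero, ensuring $|A''|/A = O(\eps)$. The family $h(r, \theta)$ smoothly interpolates between $g_{S^3}$ on $\{r \leq \hat r\}$ and $\tilde h(r, \theta)$ on $\{r \geq 1\}$, with $C^2$ corrections of size $O(\eps)$ in the transition. Simultaneously I design $\hat f$ to transition from $\hat \delta f$ on $\{r \geq 1\}$ to $\delta r^\alpha$ on $\{r \leq \hat r\}$; the matching condition $\delta \hat r^\alpha = \hat \delta f(p)$ forces the scaling $\delta = \delta(\hat\delta \|f\|_\infty \mid \alpha, \lambda, \eps)$ stated in the lemma. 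The profile of $\ln \hat f$ is chosen so that $\Hess_{\hat g_B} \ln \hat f$ and $|\nabla \ln \hat f|$ remain controlled throughout.

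To verify the Ricci bound, I use the O'Neill formula
\begin{align*}
\Ric_{\hat g}(X, Y) = \Ric_{\hat g_B}(X, Y) - 2\bigl(\Hess_{\hat g_B} \ln \hat f + d\ln \hat f \otimes d\ln \hat f\bigr)(X, Y)
\end{align*}
on horizontal $X, Y$, and the vertical formula $\Ric_{\hat g}(V, V)/\hat g(V, V) = 1/\hat f^{\,2} - \Delta_{\hat g_B} \ln \hat f - 2 |\nabla \ln \hat f|^2$. On the transition annulus $\hat r \leq r \leq 1$ the modifications to $\Ric_{\hat g_B}$ from $A$ and $h$ are $O(\eps)$, and the warping correction is $O(\eps)$ by the designed profile of $\hat f$, giving $\Ric_{\hat g} \geq \lambda - C\eps$. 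On the pure cone band $\hat r/2 \leq r \leq \hat r$, the $(1-\eps)$-factor contributes a \emph{positive} angular Ricci of size $2(1 - (1-\eps)^2)/((1-\eps)^2 r^2) \sim 4\eps/r^2$, which dominates the negative warping contribution $-2\alpha/r^2$ whenever $\alpha < 2\eps$; and the vertical Ricci $1/\hat f^{\,2} - O(\alpha)/r^2$ is rendered large and positive by taking $\hat\delta$ small compared to $\hat r/\sqrt{\alpha}$. Finally, conclusion (4) follows directly from $A(r)/r \in [1-\eps, 1]$ and the smallness of $h - g_{S^3}$.

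The main obstacle is the simultaneous balance of the parameters $\eps, \alpha, \hat r, \hat\delta$ reflected in the statement: $\eps$ must be small compared to $|\lambda|$ so $O(\eps)$ losses do not overwhelm the lower bound; $\alpha$ must be small against $\eps$ so the angular cone Ricci absorbs the warping negativity; $\hat r$ must be small so the smooth transition of $A$ and $\hat f$ fits without producing large $A''/A$ or $\Hess \ln \hat f$; and $\hat\delta$ must be small against $\hat r, \|f\|_\infty, \alpha$ so the vertical term $1/\hat f^{\,2}$ dominates the warping loss on the cone band. These constraints together produce the stated dependence chain $\eps < \eps(|\lambda|)$, $\alpha < \alpha(\eps)$, $\hat r < \hat r(\alpha, \lambda, \eps)$, $\hat\delta < \hat\delta(\lambda, \alpha, \eps, \|f\|_\infty, \hat r)$.
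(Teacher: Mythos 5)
Your ansatz is the same as the paper's (cone angle $1-\eps$, warping switched to $\delta r^\alpha$, matching $\delta \hat r^{\alpha}\approx\hat\delta f$, and the hierarchy $\eps(\lambda)$, $\alpha(\eps)$, $\hat r$, $\hat\delta$), but the error accounting on the transition annulus --- which is the actual content of the lemma --- does not hold as you state it. Two concrete problems. First, you interpolate the link metric from $\tilde h$ to $g_{S^3}$ somewhere in $[\hat r,1]$ and assert the corrections to $\Ric_{\hat g_B}$ are $O(\eps)$. They are not: $\tilde h-g_{S^3}=O(\eta r^2)$ only at zeroth order, while Ricci sees two radial derivatives of the interpolation, which are of size $O(\eta)$, and $\eta<1$ is a fixed constant, not small relative to $\eps$. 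At radii of order one nothing can absorb an $O(\eta)$ loss, since $\Ric_g>\lambda$ comes with no quantitative margin; the rounding $g_r\to g_{S^3}$ can only be performed at very small radii, after one has already banked the scale-invariant angular positivity $\sim\eps/r^2$ from the cone-angle deficit and the radial positivity $\sim\alpha/r^2$ from the $\delta r^\alpha$ warping, which then dominate the bounded $O(\eta)$ errors (this is the paper's Step 2.3, with $r_3\le r_3(\alpha,\eps,\lambda)$). Second, no ``designed profile'' of $\hat f$ can make the warping correction $O(\eps)$ on the transition region: the radial slope of $\ln\hat f$ must climb from $O(\eta)$ to $\alpha/r$, so $\Hess\,\ln\hat f$ is necessarily of scale-invariant size $\sim\alpha/r^2$ near the matching locus, and the resulting terms in $\Ric_{\hat g}|_{TB}$ --- including the mixed $ir$-components, which you never address --- can only be absorbed by the $\eps/r^2$ angular positivity; this is where $\alpha\le\alpha(\eps)$ is really needed, not merely in your cone-band inequality $\alpha<2\eps$. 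Moreover the sign matters: the radial part must be made concave uniformly in the spherical variable even though $f$ is not radial, so the matching set $\{\hat\delta f=\delta r^\alpha\}$ is an $\omega$-dependent hypersurface. This is exactly the subtlety the paper isolates and resolves in Lemma \ref{lem:glue-f-helper} (the $C^1$ cubic interpolation of $\ln f$, the $\omega$-dependent matching radius $g(\omega)$, and the concavity estimate $-f_2''/f_2\ge\alpha r^{-2}$); writing that $\Hess\,\ln\hat f$ ``remains controlled throughout'' assumes precisely what has to be proved.

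Structurally, the paper's proof succeeds because the three modifications are staged at separated scales, each financed by positivity created by the previous one: first decrease the cone angle on $[1/2,1]$ while keeping the original link family $g_r$ (costing only $C\eps$ since $|h-r|+|h'-1|+|h''|\le 10\eps$, and yielding $2\eps/r^2$ of angular Ricci at all smaller radii); then switch the warping in a thin annulus around a radius $r_m\le r_m(\alpha)$ determined by $\alpha$, which fixes $\delta=\hat\delta f(p)r_m^{-\alpha}e^{2\eta r_m}$ independently of $\hat r$, consistent with the stated dependence $\delta(\hat\delta\|f\|_\infty\mid\alpha,\lambda,\eps)$ (your choice $\delta=\hat\delta f(p)\hat r^{-\alpha}$ is not); finally round $g_r\to g_{S^3}$ at scales $\le 2r_3$. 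In your single simultaneous interpolation over $[\hat r,1]$ no mechanism is identified for absorbing the $O(\eta)$ and $O(\alpha/r^2)$ errors where they occur, so the central estimate $\Ric_{\hat g}>\lambda-C(4)\eps$ on the transition region is unproven. A smaller omission: your construction is at best $C^1$ across the gluing radii, and one still needs the $C^1$ gluing Lemma \ref{l:C1_warped_gluing} to produce a smooth warped-product metric while preserving the Ricci bound.
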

\begin{remark}
	Our notation of the constant dependence $ \delta(\,\hat{\delta}\|f\|_\infty\mid\alpha, \lambda, \eps)$ means that $\delta\to 0$ as $\hat{\delta}\|f\|_\infty\to 0$ with the other constants $\alpha,\lambda,\epsilon$ fixed.
\end{remark}

\begin{remark}
The caveat in $(1)$ that $\Ric_{\hat{g}} > \lambda - C(4)\eps$ only away from a small neighborhood of the cone point, e.g. $B_{\hat{r}/2}(p) \times S^2$ , cannot be improved. This is simply due to the fact that the fixed warping structure $C(S^3_{1 - \eps}) \times_{\delta r^\alpha} S^2$ has infinite negative curvature at the pole, in particular it has $\left.\Ric\right|_{TS^2}\to -\infty$ as $r \to 0$. In practice, $B_{\hat{r}/2}(p) \times S^2$ will be replaced by a rescaled bubble $\cB$ obtained from Lemma \ref{l:inductive_step1}, which does have the appropriate Ricci lower bound.
\end{remark}

In practice the above works as follows.  If $M=X\times_f S^2$ is a smooth manifold with a lower Ricci curvature bound and $x\in X$, then the above tells us we can find a potentially very small neighborhood $B_{2\rho}(x)\times S^2$ in which we can change the geometry of $M$.  Specifically, after shrinking the warping factor by $\hat{\delta}$, we can alter the metric so that the ball $B_{\hat{r}\rho}(x)\times S^2$ will be isometric to that of the warped cone $C(S^3_{1-\epsilon})\times_{\delta \rho(r/\rho)^\alpha} S^2$.\\  

In the case of no warping factor, as per the discussion before the statement of the above Lemma, we can repeat this process indefinitely in order to produce a dense set of singularities.  In the case of a warping factor we can similarly repeat this process indefinitely, but also combine with the Inductive Lemma \ref{l:inductive_step1} in order to glue topology in at each step.  We will discuss this construction more carefully in the next step.

\vspace{.3cm}

\subsection{Step 3:  Constructing $M_{j+1}$}\label{ss:geom_construction:step3}

Let us now see how to use the Inductive Lemmas \ref{l:inductive_step1} and \ref{l:inductive_step2} in order to complete the inductive step of the construction and build $M_{j+1}$ from $M_j$. Thus let us assume we have built $M_j=X_j\times S^2$ with $g_j= h_j +f_j^2 g_{S^2}$ and $\Ric_{g_j}>\lambda_j$.  Let us also choose a maximal disjoint collection $\{B_{r_j}(x^a_j)\}$ such that $\{x_j^a\} \subseteq\{ x\in X^4_j: r_x\geq 4 r_j\}$ and $\phi_{ji}(B_{4r_j}(x_j^a)) \cap \{x_i^b\} = \emptyset$. \\ 

Due to the unfortunate number of constants being accounted for in the construction, it is helpful to briefly remark on what will happen.  The goal is to construct $M_{j+1}$ in two steps.  First we will take each ball $B_{r_j}(x^a_j)\subseteq X_j$ and apply the Inductive Lemma \ref{l:inductive_step2} in order to add a warped cone singularity on $B_{\hat r r_j}(x^a_j)$.  This space is not smooth at the cone point $x^a_j$ of course, but will have appropriately positive Ricci at least outside of $B_{\hat{r}r_j/2}(x^a_j)$.  We will then apply the Inductive Lemma \ref{l:inductive_step1} in order to replace each warped cone $B_{\hat r r_j}(x^a_j)$ with the smooth bubble metric $\hat\cB$ of positive Ricci.  This will produce $X_{j+1}$, and if we choose the various constants sufficiently small at each stage, we can do this while keeping control of both the space and its relationship to $X_j$.  That is, we can show the inductive hypotheses $(I1)-(I4)$ are satisfied.\\

Let us now describe the modifications to each disjoint ball $B_{2r_j}(x_j^a) \times S^2$ in more detail. It will in fact be more convenient to describe the process on a single rescaled ball $B_2(p) \times S^2 := r_j^{-1}(B_{2r_j}(x_j^a) \times S^2)$ with metric $g = g_B + f^2g_{S^2} :=r_j^{-2}(h_j + f_j^2g_{S^2}) = r_j^{-2}g_j$. Observe that the ball $B_2(p) \times S^2$ satisfies the criteria of Lemma \ref{l:inductive_step2}, with Ricci lower bound $r_j^2\lambda_j$.  Therefore let us apply Lemma \ref{l:inductive_step2} with input parameters $\eps'$, $\alpha$, $\hat{r}$, $\hat{\delta}$, and output parameter $\delta_I$.  We choose $\eps'$, $\alpha$, $\hat{r}$, and $\hat{\delta}$ to satisfy the conditions of Lemma \ref{l:inductive_step2} and Lemma \ref{l:inductive_step1}, but we may further shrink these constants later.\\

The previous application of Lemma \ref{l:inductive_step2} to $B_2(p) \times S^2$ produced a metric $\hat g = \hat g_{B}+\hat f^2 g_{S^2}$ such that $\hat f= \hat{\delta}f$ and $\hat g_B = g_B$ on the open set $A_{1,2}(p)$. Note additionally that by choosing $\eps' < \eps'(\eps_{j + 1}, r_j)$ small enough, we can ensure that $\Ric_{\hat{g}} \geq r_j^2\lambda_{j + 1}$ holds away from $B_{\hat{r}/2}(p) \times S^2$. Inside the inner radius, $(B_{\hat r}(p) \times S^2,\hat g)$ is isometric to the cone warping structure $B_{\hat r}(0) \times S^2 \subseteq C(S^3_{1-\epsilon'})\times_{\delta_I r^\alpha} S^2$.  We wish to replace this inner ball with a rescaled copy of the gluing model $\cB(\epsilon',\alpha,\delta(\epsilon',\alpha))$ from Lemma \ref{l:inductive_step1}.  Recall from Lemma \ref{l:inductive_step1} that $U\subseteq \cB$ is an open set  so that $\cB\setminus U$ is isometric to $dr^2 + (1-\epsilon')^2 r^2 g_{S^3}+(\delta(\eps',\alpha))^2r^{2\alpha} g_{S^2}$.  Let $\hat \cB$ be a rescaling of $\cB$ so that the rescaled core $\hat{U} \subseteq \hat \cB$ is contained in a bounded region with a collar neighborhood $\hat V$ of its boundary isometric to $A_{\hat{r}/2,\hat{r}}(0) \times S^2 \subseteq C(S^3_{1 - \eps'}) \times_{\delta_{II} r^\alpha}S^2$, for some $\delta_{II}=\delta_{II}(\hat{r}, \alpha, \eps')$. We additionally require that $\hat{r} < \hat{r}(|\lambda_{j + 1}|, r_j)$ is small enough so that $\Ric_{\hat\cB} > r_j^2\lambda_{j + 1}$ in this region.  In particular, if we multiply $\hat f $ by $\min(\delta_{I},\delta_{II}) /\delta_I$ and multiply the warping factor of $\hat \cB$ by $\min(\delta_{I},\delta_{II}) /\delta_{II}$, then we can define a warped product $(\bar \cB, \bar g)$ from $(B_2(p) \times S^2,\hat g)$ by removing $B_{\hat r/2}(p)\times S^2$ and isometrically gluing the rescaled model $\hat\cB$ into $B_{\hat{r}}(p)\times S^2$ along $\hat V$. Note that for warped product geometries, multiplying the warping factor by a constant $\leq 1$ only increases the Ricci curvature--see Remark \ref{rmk:mult-by-delta}--so $\Ric_{\bar g} > r_j^2\lambda_{j + 1}$. \\

We now seek to replace the original ball $B_{2r_j}(x_j^a) \times S^2 \subseteq M_j$ with the rescaled $r_j\bar \cB$. By construction, $r_j\bar \cB$ is a warped product over a compact manifold with boundary, with a collar neighborhood of its boundary isometric to $(A_{r_j,2r_j}(p) \times S^2,h_j + (\bar{\delta}f_j)^2g_{S^2})$, where $\bar \delta := \hat{\delta}\min(\delta_I,\delta_{II})/\delta_I$. If we multiply the warping factor of $M_j$ by $\bar{\delta}$ (which is independent of the choice of point in $\{x_j^a\}$!), then we see that $B_{r_j}(x_j^a) \times S^2$ can be replaced by $r_j\bar \cB$, glued isometrically along $A_{r_j,2r_j}(p) \times S^2$. We similarly replace $B_{2r_j}(x_j^a) \times S^2$ for each other $a$ to form $(M_{j + 1}, g_{j + 1})$. \\

We have almost completed our construction.  For any choices of $\eps'>0$ and $\hat r>0$ appropriately small our construction above holds, and we need only make choices.   First observe that we can define $\phi_{j+1}:X_{j+1}\to X_j$ to be the identity outside of $\bigcup_a B_{r_j}(x^a_j)$.  On the union of annuli $\bigcup_a (B_{r_j}(x^a_j)\setminus B_{\hat{r} r_j}(x^a_j))$ we have by Lemma \ref{l:inductive_step2} that $|D\phi_{j+1}-I|<C\eps'$ if we set $\phi_{j + 1} := \Id$ setwise on this region.  We can use the second part of Lemma \ref{l:inductive_step1} in order to extend $\phi_{j+1}$ to each glued bubble $r_j\bar\cB$, so that $|D \phi_{j+1}|<C(6)$ on $\{B_{\hat{r} r_j}(x^a_j)\}$.  If we now choose $\epsilon'<\epsilon'(\delta_{j+1})$ and $\hat{r} < \delta_{j + 1}/2$ sufficiently small, then we have that \eqref{e:phi_lipschitz_properties} holds.  This finishes the construction of $M_{j+1}$.  $\qed$

\vspace{.5cm}

\section{Preliminaries}

\vspace{.3cm}

\subsection{Ricci Curvature of Warping Geometry}

The underlying ansatz for all constructions going forward is the warped product with $S^2$.  We have several formulas for the curvature of such spaces which will be used in this paper.  In this section we collect together some elementary remarks and formulas about such constructions.  These will be the starting point for many of the other formulas computed in this paper. \\

Let us now be more precise: consider the data of a smooth Riemannian manifold $(X, h)$ with a positive function $f: X \to (0,\infty)$. We can form the warped product of $X$ with $S^2$ as follows:

\begin{equation}
X \times_f S^2 := (X\times S^2, h + f^2g_{S^2})\,.
\end{equation}

That is, $X \times_f S^2$ is a Riemannian manifold that topologically has the structure of a (trivial) sphere bundle $S^2 \to X \times_f S^2 \stackrel{\pi}{\to} X$. The fibers $\pi^{-1}(x) = S^2_{f(x)}$, $x \in X$, of the projection map $\pi$ are metrically round spheres of radius $f$, and are orthogonal to the natural sections $X \times \{\omega\}$, $\omega\in S^2$. Denoting $M := X \times_f S^2$, we will therefore use the orthogonal splitting to make the identification $TM \cong TX \oplus TS^2$. We obtain the following concise formulas for the Ricci curvature of $M$ in the complementary directions:

\begin{align}
\left.\Ric_M\right|_{TX} &= \Ric_{h} - 2\frac{(\nabla_{h}^2 f)}{f}\,, \\
\left.\Ric_M\right|_{TS^2} &= \left(\frac{1}{f^2} - \frac{|\nabla_{h} f|^2_{h}}{f^2} - \frac{\Delta_{h}f}{f}\right)f^2g_{S^2}\,.
\end{align}

Let us make a couple of remarks about the general form of these identities, which we will use without further comment throughout the rest of the paper:

\begin{remark}
There is no cross-term in $\Ric_M$, i.e. $\Ric_M(v,w) = 0$ for $v \in TX$ and $w\in TS^2$. In practice, this splitting of the Ricci curvature into orthogonal blocks will allow us to subdivide the problem of lower bounding $\Ric_M$ into two distinct steps. 
\end{remark}

\begin{remark}
\label{rmk:mult-by-delta}
The scaling action $f \mapsto \lambda f$ for $\lambda > 0$ leaves the $TX$ directions $\left.\Ric_M\right|_{TX}$ invariant, and acts on the $TS^2$ directions by $\left.\Ric_M\right|_{TS^2} \mapsto \lambda^2\left.\Ric_M\right|_{TS^2} + (\lambda^{-2} - 1)\lambda^2 g_{S^2}$. Thus, $\Ric_M$ is non-decreasing under the scaling $f \mapsto \lambda f$ when $\lambda \leq 1$. In practice, this will mean that $\left.\Ric_M\right|_{TS^2}$ can be made as large as desired, without disrupting $\left.\Ric_M\right|_{TM}$, by multiplying $f$ by a suitably small positive number.
\end{remark}

\vspace{.3cm}

\subsection{$C^1$ Gluing Lemma for Warping Geometry}

We state in this subsection a $C^1$ gluing lemma.  It is a slight generalization of a result of Menguy \cite[Lem.~1.170]{Men00}, and its proof is essentially verbatim.  We will make use of it in both steps of the construction:

\begin{lemma}[$C^1$ Warped Geometry Gluing Lemma]\label{l:C1_warped_gluing}
	Let $(M^n, h)$ be a smooth manifold with $N^{n-1}\subseteq M$ a complete smooth submanifold.  Consider $(M^n\times S^k,g)$ where $g= h + f^2g_{S^k}$ is a $C^1$ warped product metric.  Assume further that
\begin{enumerate}
	\item  $h,f$ are smooth on $M\setminus N$ and have smooth limits from each side of $N$ ,
	\item $\inf\Ric_g > \lambda $ on $\big(M\setminus N\big) \times S^k$ . 
\end{enumerate}
Then for every open set $N\subseteq U$ and number $\epsilon > 0$ there exists a smooth metric $g_U= h_U+f^2_U g_{S^k}$ on $M\times S^k$ such that $\Ric_{g_U} > \lambda$ and $g_U = g$ outside of $U\times S^k$. Moreover, one can arrange that $\|g - g_U\|_{C^1(U\times S^k)} < \epsilon$ .
\end{lemma}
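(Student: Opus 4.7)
Work in a tubular neighborhood $V$ of $N$ in $M$, parametrized by $(y,t)$ with $y\in N$ and $t$ the signed normal distance. In these coordinates the $C^1$ hypothesis means $h,f,\partial_t h,\partial_t f$ are continuous across $\{t=0\}$, while $\partial_t^2 h$ and $\partial_t^2 f$ are bounded but have jump discontinuities there. Pick a standard symmetric mollifier $\eta_\delta(t)$ supported in $[-\delta,\delta]$ and a smooth cutoff $\psi$ on $M$ with $\psi\equiv 1$ on a small tubular neighborhood of $N$ and $\operatorname{supp}\psi\Subset U\cap V$. Define smoothed data
\[
h^\delta := (1-\psi)\,h + \psi\,(\eta_\delta *_t h), \qquad f^\delta := (1-\psi)\,f + \psi\,(\eta_\delta *_t f),
\]
and set $g_U := h^\delta + (f^\delta)^2 g_{S^k}$. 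By construction this is a warped product on $M\times S^k$, it coincides with $g$ outside $U\times S^k$, and it is smooth for all small $\delta$: the transition region where $\psi$ varies lies where $h,f$ are already smooth, and the mollifier kills the jump of $\partial_t^2$ at $N$. Since $h,f\in C^1$ are $C^1$-approximated by their mollifications, $\|g_U - g\|_{C^1(U\times S^k)}\to 0$ as $\delta\to 0$ automatically.

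The main step is to verify $\Ric_{g_U} > \lambda$. Using the warped-product formulas from the preceding subsection, $\Ric_{g_U}$ splits into a $TM$-block $\Ric_{h^\delta} - 2\,(\nabla^2_{h^\delta} f^\delta)/f^\delta$ and a $TS^k$-block involving $f^\delta$, $|\nabla f^\delta|^2$ and $\Delta_{h^\delta} f^\delta$. Each is linear in the second derivatives $\partial^2 h^\delta, \partial^2 f^\delta$, with coefficients that are rational in $h,h^{-1},f,f^{-1}$ and at most quadratic in $\partial h,\partial f$. Because mollification commutes with differentiation, $\partial^2 h^\delta = \eta_\delta *_t \partial^2 h$ and $\partial^2 f^\delta = \eta_\delta *_t \partial^2 f$; the coefficients at $g_U$ differ from those at $g$ by $O(\|g_U - g\|_{C^1}) = o(1)$. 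Schematically,
\[
\Ric_{g_U} \;=\; \eta_\delta *_t \Ric_g \;+\; E_\delta,
\]
where $E_\delta$ collects the commutators of the convolution with the $g$-dependent coefficients and the $(\partial g)^2$ lower-order terms, and is bounded by $C(\|g\|_{C^2_\pm}, \inf f)\cdot \|g_U - g\|_{C^1}\to 0$. On the other hand, the strict hypothesis $\inf_{(M\setminus N)\times S^k}\Ric_g>\lambda$ together with smoothness on each side up to $N$ gives $\Ric_g \geq \lambda + \eps_0$ pointwise in a one-sided neighborhood of $N$ on each side, and convolutions preserve $L^\infty$ lower bounds, hence $\eta_\delta *_t \Ric_g \geq \lambda + \eps_0$ throughout the mollification region. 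Picking $\delta$ so small that $|E_\delta|<\eps_0$ and simultaneously $\|g_U - g\|_{C^1}<\eps$ completes the proof.

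The main obstacle is precisely the nonlinearity of $\Ric$ in the metric: a naive convex-combination interpolation between the two smooth sides of $N$ would not preserve the lower bound $\Ric > \lambda$. The resolution used above is that the $\partial^2 g$ contribution, which is the part actually affected by the jump across $N$, enters the Ricci tensor \emph{linearly}, so mollification there produces an honest convex combination of the pointwise values from both sides; all the nonlinear contributions are in factors of $g,g^{-1},\partial g$, which are continuous across $N$ by the $C^1$ hypothesis and therefore $C^1$-close to their mollifications, contributing only an $o(1)$ error that can be absorbed into the gap $\eps_0$.
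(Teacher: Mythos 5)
Your argument is essentially the paper's own: the paper gives no written proof but defers to Menguy's $C^1$ gluing lemma, whose proof is exactly this transverse mollification of $h$ and $f$ across $N$, using that $\Ric$ is linear in second derivatives (so the mollified curvature is a convex average of the two-sided values up to commutator errors controlled by the $C^1$-closeness) and absorbing those errors into the strict gap $\inf\Ric_g-\lambda>0$. One small repair: take $t$ to be a smooth transverse coordinate coming from a smooth tubular neighborhood of $N$ in $M$ rather than the $h$-normal distance, since $h$ is only $C^1$ and its Fermi chart is not a smooth chart of $M$, so a tensor with smooth components in that chart need not be smooth on $M$ (and, if $N$ is noncompact, let the mollification width $\delta$ vary smoothly along $N$ to keep the error estimates uniform).
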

\begin{remark}
	It is enough to assume $g$ is $C^4$ on $M\setminus N$ .
\end{remark}
\begin{remark}
	The verbatim result is true for more general warping factors other than spheres.
\end{remark}

\vspace{.3cm}

\subsection{Regularity under the Exponential Map}

The following is relatively standard, and the proof goes through a series of Jacobi field estimates; however it is surprisingly difficult to find a precise reference for it.  For the convenience of the reader we state the result precisely below:

\begin{lemma}[Regularity of Exponential map]\label{l:exponential_regularity}
	Let $0 < \eta < 1$ be a number and $(B_1(p),g)$ a metric ball which satisfies the regularity scale estimates
\begin{align}\label{e:prelim:exponential:regularity_scale}
	\inj(p)>1\text{ with } \sum_{\ell = 0}^{k} ||\nabla^{\ell}Rm||_{L^\infty} < \eta\, .
\end{align}
Given an orthonormal basis $\{\partial_a\}$ of $T_pM$ let $g_{ab} = \exp^*g$ be the metric in exponential coordinates.  Then we can estimate
\begin{align}\label{e:exponential_regularity:estimate}
	\sum_{a,b}\left(r^{-2}||g_{ab}-\delta_{ab}||_{L^\infty} + \sum_{c}r^{-1}||\partial_{c} g_{ab}||_{L^\infty}+\sum_{\ell = 2}^k \sum_{c_1,\ldots, c_\ell}||\partial^{\ell}_{c_1,\ldots, c_\ell}g_{ab}||_{L^\infty}\right) \leq C(n,k)\,\eta\, .
\end{align}
\end{lemma}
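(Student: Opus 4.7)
The plan is to express the metric in exponential coordinates via Jacobi fields and iterate the Jacobi equation against the curvature bounds \eqref{e:prelim:exponential:regularity_scale}. For $v \in T_pM$ with $|v|<1$, let $\gamma_v(t) := \exp_p(tv)$. The variation $\alpha(s,t) := \exp_p(t(v+sw))$ is a family of geodesics, so $(d\exp_p)_v(w) = J_{v,w}(1)$, where $J_{v,w}$ is the Jacobi field along $\gamma_v$ with $J_{v,w}(0) = 0$ and $J'_{v,w}(0) = w$. In the orthonormal basis $\{\partial_a\}$ this yields the identity $g_{ab}(v) = \langle J_{v,\partial_a}(1), J_{v,\partial_b}(1)\rangle$, and everything reduces to estimating these Jacobi fields and their $v$-derivatives in a controlled way.

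For the $C^0$ and $C^1$ estimates I would proceed as follows. Writing $J_{v,\partial_a}(t) = tP_t(\partial_a) + E_a(t)$, where $P_t$ denotes parallel transport along $\gamma_v$, one has $E_a(0) = E_a'(0) = 0$ and $E_a''(t) = -R(tP_t(\partial_a) + E_a, \dot\gamma_v)\dot\gamma_v$. Since $|\Rm|\leq \eta$ and $|\dot\gamma_v| = |v|$, double integration combined with Gronwall yields $|E_a(t)| + t|E_a'(t)|\leq C(n)\eta|v|^2 t^3$ on $[0,1]$. Substituting into the inner product produces $|g_{ab}(v) - \delta_{ab}|\leq C(n)\eta |v|^2$, which is the first term in \eqref{e:exponential_regularity:estimate}. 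Differentiating the Jacobi field in the parameter $v$ defining $\gamma_v$ produces an analogous ODE for $\partial_c J_{v,\partial_a}$ whose forcing term still carries a factor of $|v|$, and a second Gronwall pass yields $|\partial_c g_{ab}(v)|\leq C(n)\eta|v|$.

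For the higher derivatives $2\leq \ell\leq k$ I would iterate this variational procedure. Each additional $v$-derivative of $J_{v,\partial_a}$ satisfies a second-order ODE whose forcing term is a polynomial in the lower-order variation fields multiplied by $\nabla^{|\beta|}\Rm$ evaluated along $\gamma_v$, where $\beta$ is the multi-index of $v$-derivatives taken so far. An induction on $|\beta|$ together with Gronwall, using the hypothesis $\|\nabla^j\Rm\|_{L^\infty}\leq \eta$ for all $j\leq k$, bounds $|\partial^\beta_v J_{v,\partial_a}(t)|$ by $C(n,|\beta|)\eta$ times appropriate powers of $t$ and $|v|$. The product rule applied to $g_{ab}(v) = \langle J_{v,\partial_a}(1), J_{v,\partial_b}(1)\rangle$ then yields \eqref{e:exponential_regularity:estimate}. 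Equivalently, one may invoke the classical Taylor expansion $g_{ab}(x) = \delta_{ab} - \tfrac{1}{3}R_{acbd}(p)x^c x^d + \cdots$, whose $\ell$-th coefficient is a universal polynomial in $\nabla^j\Rm(p)$ for $j\leq \ell-2$, with the Jacobi field analysis controlling the remainder.

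The main obstacle is combinatorial rather than conceptual: at each inductive step one must track how many $v$-derivatives land on the curvature factor versus on the Jacobi fields, and verify that the resulting constants depend only on $n$ and $k$. The Gauss lemma $g_{ab}(x)x^b = x^a$, which pins down the radial direction exactly, considerably simplifies the bookkeeping since no estimate is ever needed there.
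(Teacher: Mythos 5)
Your proposal follows essentially the same route as the paper's own (sketched) argument: express $g_{ab}$ in exponential coordinates through Jacobi fields along radial geodesics, differentiate the Jacobi equation repeatedly in the variation parameter to obtain inhomogeneous ODEs whose forcing involves $\nabla^{j}\Rm$ and lower-order variation fields, and close the induction with Gronwall/Duhamel estimates. The only cosmetic difference is that you phrase the higher derivatives as $v$-derivatives of $J_{v,\partial_a}$ while the paper writes them as iterated covariant derivatives $\nabla_{J_{a_1}}\cdots\nabla_{J_{a_k}}J_b$, which is the same variation-through-geodesics computation.
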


\begin{remark}
Let us say a few words about how these estimates can be proved.  One rewrites the metric derivatives in terms of the Jacobi vector fields $J_a := r\partial_a$ along radial geodesics passing through p, e.g. $\partial_c g_{ab} = r^{-3} (g(\nabla_{J_c} J_a, J_b) + g(J_a,\nabla_{J_c}J_b))$ for $\partial_c \perp \nabla r$. To obtain estimates on the iterated covariant derivatives $J_{a_1,\ldots, a_k, b} := \nabla_{J_{a_1}}\cdots \nabla_{J_{a_k}} J_b$, one uses the equation they solve: $0 = \nabla_{J_{a_1}}\cdots \nabla_{J_{a_k}}\Big(J_b'' + R(J_b, \partial_r)\partial_r\Big) = J_{a_1,\ldots, a_k, b}'' + R(J_{a_1,\ldots, a_k, b}, \partial_r)\partial_r + E_{a_1,\ldots, a_k, b}$. The inhomogeneous term $E_{a_1,\ldots, a_k, b}$ depends only on lower-order covariant derivatives $J_{c_1,\ldots, c_\ell}J_d, \ell < k$, which inductively have already been estimated, the base case $\ell = 0$ being the standard estimates for Jacobi vector fields as in \cite[Ch.~6.5]{Jos17}. One can then proceed to estimate solutions of this inhomogeneous ODE e.g. by Duhamel's principle.
\end{remark}

\begin{remark}
Let us briefly compare this regularity estimate and proof sketch with another possible approach. First, one switches to harmonic coordinates and uses the assumed injectivity radius and curvature bounds \eqref{e:prelim:exponential:regularity_scale} to obtain $C^{k + 1,\alpha}$ control on the metric in these coordinates for any $0 < \alpha < 1$, following \cite{And90}. Then, one converts this into $C^{k - 1,\alpha}$ control on the metric in exponential coordinates by \cite[Th.~2.1]{DeTKaz81}. The loss of $(1 - \alpha)$ derivatives compared to Lemma \ref{l:exponential_regularity} is immaterial in our context, where we are only ever working on the regularity scale in a smooth manifold.
\end{remark}

Our primary use of the above will be to view the metric $g$ as a form of twisted cone.  Namely, in the context where $\inj(p)>1$ as above we can use exponential coordinates to write the metric $g$ on $B_1(0)\subseteq C(S^{n-1})=\dR^n$ as
\begin{align}\label{e:prelim:exponential:metric_cone}
	g = dr^2 + r^2 g_r\, ,
\end{align}
where $g_r$ is a smooth family of metrics on $S^{n-1}$.  The estimates above can then be understood as estimates on this family $g_r$ :

\begin{corollary}[Cone Regularity of Exponential Map]\label{c:cone_exponential}
		Let $(B_1(p),g)$ be a metric ball which satisfies the regularity scale estimates \eqref{e:prelim:exponential:regularity_scale} with $0 < \eta < 1$.  Let us use exponential coordinates to write $g = dr^2 + r^2 g_r$ as in \eqref{e:prelim:exponential:metric_cone} , where $g_r$ is a family of metrics on $S^{n-1}$.  Then we have the estimates
\begin{align}
	\sum_{\ell =0}^k||\nabla_{g_{S^3}}^{\ell}(g_r-g_{S^3})\,||_{L^\infty(S^3,\, g_{S^3})}&\leq C(n,k)\,\eta\,r^2\, ,\notag\\
	\sum_{\ell =0}^{k-1}||\nabla_{g_{S^3}}^{\ell}g'_r\,||_{L^\infty(S^3,\, g_{S^3})}&\leq C(n,k)\,\eta\,r\, ,\notag\\
	\sum_{\ell =0}^{k-2}||\nabla_{g_{S^3}}^{\ell}g''_r\,||_{L^\infty(S^3,\, g_{S^3})}&\leq C(n,k)\,\eta\,\, .
\end{align}
\end{corollary}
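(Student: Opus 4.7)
The plan is to leverage Lemma \ref{l:exponential_regularity} via a direct change-of-variable computation between Cartesian exponential coordinates $\{x^a\}$ and polar coordinates $(r,\xi)\mapsto r\theta(\xi)$, where $\theta:S^{n-1}\hookrightarrow\dR^n$ is the standard embedding and $\xi$ are local coordinates on $S^{n-1}$. By the Gauss lemma applied to the exponential map, $g(\partial_r,\partial_r)=1$ and $g(\partial_r,\partial_\xi)=0$, which justifies the decomposition $g=dr^2+r^2g_r$ and yields the coordinate expression
\begin{equation*}
(g_r)_{\alpha\beta}(\xi)\,=\,g_{ab}(r\theta(\xi))\,\partial_\alpha\theta^a(\xi)\,\partial_\beta\theta^b(\xi)\,,
\end{equation*}
with the identical formula producing $g_{S^{n-1}}$ when $g_{ab}$ is replaced by $\delta_{ab}$. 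Subtracting,
\begin{equation*}
(g_r-g_{S^{n-1}})_{\alpha\beta}\,=\,(g_{ab}(r\theta)-\delta_{ab})\,\partial_\alpha\theta^a\,\partial_\beta\theta^b\,.
\end{equation*}

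For the $\ell=0$ bound, Lemma \ref{l:exponential_regularity} directly gives $|g_{ab}(r\theta)-\delta_{ab}|\leq C(n)\eta r^2$, while $|\partial_\alpha\theta|\leq 1$, yielding $\|g_r-g_{S^{n-1}}\|_{L^\infty}\leq C(n)\eta r^2$. For higher angular derivatives I would expand $\partial_{\xi_1}\cdots\partial_{\xi_\ell}$ on the right-hand side via the chain rule: each $\partial_\xi$ either hits $g_{ab}(r\theta)$, contributing one factor of $r\partial\theta$ and raising the order of Cartesian differentiation of $g$ by one, or hits one of the $\partial\theta$ factors. Collecting terms and applying the bounds from Lemma \ref{l:exponential_regularity}, which read $|\partial^m g_{ab}|\leq C(n,m)\eta\cdot r^{\max(0,2-m)}$ for $0\leq m\leq k$, one sees that every chain-rule term carries a net weight of at least $r^2$, since a term with $m$ Cartesian derivatives of $g_{ab}$ comes with $m$ explicit factors of $r$ from the chain rule. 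Passing from coordinate to covariant $\nabla_{g_{S^{n-1}}}$-derivatives only introduces Christoffel symbols of the round sphere, which are $\eta$- and $r$-independent and absorb into the constant.

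For the radial estimates, I would analogously differentiate in $r$: $\partial_r$ applied to $g_{ab}(r\theta)$ produces $\theta^c\partial_c g_{ab}(r\theta)$, so each radial derivative contributes one Cartesian derivative of $g$ without a compensating power of $r$. In particular,
\begin{equation*}
\partial_r(g_r)_{\alpha\beta}\,=\,\theta^c(\partial_c g_{ab})(r\theta)\,\partial_\alpha\theta^a\partial_\beta\theta^b\,,
\end{equation*}
is bounded by $|\partial g|\leq C(n)\eta r$, and the same chain-rule expansion as above—now accounting for one fewer available power of $r$—gives $\sum_{\ell\leq k-1}\|\nabla^\ell_{g_{S^{n-1}}}g_r'\|_{L^\infty}\leq C(n,k)\eta r$. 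A second $r$-differentiation produces $\theta^c\theta^d\partial_c\partial_d g_{ab}$, of size $|\partial^2 g|\leq C(n)\eta$, yielding the claimed $C(n,k)\eta$ bound on $\nabla^\ell g_r''$ for $\ell\leq k-2$.

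The main obstacle is purely bookkeeping: one must verify that across all combinatorial terms produced by iterated chain rule, the naive count $r^{(\text{angular derivatives hitting }g)}\cdot r^{\max(0,2-m)}$ for the accumulated weight never falls below the claimed exponent. This works out cleanly because the trade-off between gaining powers of $r$ from chain-rule factors and losing them by deepening the Cartesian differentiation of $g_{ab}$ is exactly balanced for $m\geq 2$, while for $m\in\{0,1\}$ the bound from Lemma \ref{l:exponential_regularity} supplies the missing powers of $r$.
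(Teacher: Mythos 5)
Your proposal is correct and is essentially the paper's intended argument: the paper states the corollary without a separate proof, presenting it as the direct translation of the Cartesian exponential-coordinate estimates of Lemma \ref{l:exponential_regularity} into the polar form $g = dr^2 + r^2 g_r$, which is exactly the change-of-variables and chain-rule bookkeeping (with the Gauss lemma supplying the block structure) that you carry out. The weight count you describe — $r$ gained per angular derivative hitting $g_{ab}(r\theta)$ versus the loss of decay when the Cartesian derivative order exceeds one — balances exactly as you claim, so no gap remains.
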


\vspace{.5cm}

\section{Step 1:  The Gluing Block}

In this Section we build the gluing block of Step 1 for our construction.
Our $4$-manifold of interest for this gluing block is the generating line bundle $E\to S^2$, which we can topologically also view as $\dC^2$ blown up at the origin.
We will build a metric of positive Ricci curvature on $E\times S^2$ which has the property that at infinity it looks roughly like $\dR^4\times S^2$.  
More precisely, it will be isometric to the warped product $C(S^3_{1-\epsilon})\times_{\delta r^\alpha} S^2$ near infinity.
The precise Lemma is the following:\\

\begin{lemma}[Inductive Step 1]\label{l:step1:inductive_step1:2}
	For every $0<\epsilon<\frac{1}{10}$, $0<\alpha<\alpha(\epsilon)$ and $0<\delta<\delta(\epsilon,\alpha)$ there exists a smooth Riemannian manifold $\cB(\epsilon,\alpha,\delta)$ with   $\Ric_{\cB}>0$ and such that
\begin{enumerate}
	\item $\cB$ is diffeomorphic to $E\times S^2$, where $E$ is the total space of the generating line bundle $E\to S^2$. 
    Further, $(\cB,g)$ has a warped product structure $g:= g_E+f^2 g_{S^2}$ for some smooth $f:E\to \dR^+$.
	\item $\exists$ $U=U_E\times S^2\subseteq \cB$ such that $\cB\setminus U$ is isometric to a neighborhood of $\infty$ in $C(S^3_{1-\epsilon})\times_{\delta r^\alpha} S^2$ with the metric $dr^2+(1-\epsilon)^2 r^2 g_{S^3}+\delta^2 r^{2\alpha} g_{S^2}$ .
\end{enumerate} 
Further, there exists $\phi:(E,g_E)\to C(S^3_{1-\epsilon})$ such that
\begin{enumerate}
	\item $\phi$ is a diffeomorphism away from the cone point $0\in C(S^3_{1-\epsilon})$, with $\phi^{-1}(0)\cong S^2$ an isometric sphere, 
	\item $|D \phi|\leq C$ is uniformly bounded with $\phi$ an isometry away from $U_E$ .
\end{enumerate}
\end{lemma}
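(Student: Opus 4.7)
The plan is to build $\cB$ as a cohomogeneity-one doubly-warped product over the Hopf fibration $S^1 \to S^3 \to S^2$. Away from the zero section we have $E \setminus S^2 \cong (0,\infty) \times S^3$, on which I would write the base metric in the ``Berger sphere'' form
\begin{equation*}
g_E = dt^2 + A(t)^2\eta^2 + B(t)^2 \pi^* g_{S^2}\, ,
\end{equation*}
where $\eta$ is the Hopf connection $1$-form and $\pi: S^3 \to S^2$ is the Hopf projection. The full metric on $\cB$ is then $g = g_E + f(t)^2 g_{S^2}$. The boundary conditions on $A, B, f$ are dictated by two demands: smooth extension to the zero section at $t = 0$, forcing $A(0) = 0$, $A'(0) = 1$, $B(0) = c_0 > 0$, $f(0) = f_0 > 0$, with $A$ odd and $B, f$ even in $t$; and matching $C(S^3_{1-\epsilon}) \times_{\delta r^\alpha} S^2$ at infinity, forcing $A(t) = B(t) = (1-\epsilon)(t - t_0)$ and $f(t) = \delta(t-t_0)^\alpha$ for $t \geq t_1$, with chosen constants $t_0 < t_1$. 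These two regimes must be smoothly interpolated across $t \in (0, t_1)$.

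In the cone region $t \geq t_1$, positivity of $\Ric_g$ follows from two explicit computations. The base cone $C(S^3_{1-\epsilon})$ has sectional curvature zero on radial planes and $(1 - (1-\epsilon)^2)/((1-\epsilon)^2 r^2)$ on planes tangent to the fiber $S^3$, via the warped-product formula $K_{V,W} = (1 - (h')^2)/h^2$ with $h(r) = (1-\epsilon)r$; so $\Ric_{g_E}$ is strictly positive on $S^3$-tangent directions and zero radially. Warping by $f = \delta r^\alpha$ adds $-2\Hess(f)/f$ to $\Ric_g|_{TE}$, whose eigenvalues compute to $2\alpha(1-\alpha)/r^2$ radially and $-2\alpha/r^2$ on $S^3$-tangent directions, so strict positivity of $\Ric_g|_{TE}$ forces
\begin{equation*}
\alpha < \alpha(\epsilon) := \frac{1 - (1-\epsilon)^2}{(1-\epsilon)^2}\, .
\end{equation*}
The fiber $S^2$-direction, using $\Ric_g|_{TS^2}(V,V) = (1 - |\nabla f|^2 - f\Delta f)\, g_{S^2}(V,V)$ from the preliminaries, reduces to $1 > 2\alpha(\alpha+1)\delta^2 r^{2\alpha-2}$; since $2\alpha - 2 < 0$ this holds uniformly on $\{r \geq t_1 - t_0\}$ for $\delta = \delta(\epsilon,\alpha)$ small.

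Near the zero section, the Taylor expansions $A = t + O(t^3)$, $B = c_0 + O(t^2)$, $f = f_0 + O(t^2)$ give a smooth metric on the disc bundle, bi-Lipschitz close to $\dR^2 \times S^2_{c_0} \times S^2_{f_0}$, with $\Ric_g > 0$ arising from the $1/c_0^2$ and $1/f_0^2$ contributions of the two round $S^2$-factors provided $c_0, f_0$ are small. The main obstacle is then the transition region $t \in (0, t_1)$: $A$ must decelerate from slope $1$ to slope $1 - \epsilon$ (so $A'' < 0$, contributing a favorable $-A''/A > 0$ to radial Ricci), while $B$ must accelerate from $0$ to slope $1 - \epsilon$ (so $B'' > 0$, contributing an unfavorable $-2B''/B < 0$). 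I would resolve this by exploiting the freedom in $f$: letting $f$ drop concavely from $f_0$ to $\delta t_1^\alpha$ across the transition, the warping contribution $-2f''/f$ becomes large and positive on the radial block and dominates the harmful $B''$ term, while the $1/f^2$ contribution to $\Ric_g|_{TS^2}$ stays dominant because $\delta$ is small. The resulting system of pointwise inequalities on $A, B, f$ and their first two derivatives on a fixed compact interval can be solved by taking $c_0, f_0, \delta$ sufficiently small relative to $\epsilon, \alpha$ and choosing $t_1$ large. Related constructions of Bérard-Bergery and Perelman type show this kind of interpolation is in principle achievable, but the doubly-warped setup with the extra $S^2$ factor requires a hands-on verification.

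Finally, I construct $\phi: (E, g_E) \to C(S^3_{1-\epsilon})$ in the coordinates $(t, \theta) \in (0,\infty) \times S^3$ by $\phi(t, \theta) := (\psi(t), \theta)$, with $\psi(t) = t - t_0$ for $t \geq t_1$ and $\psi$ smoothly contracting $(0, t_1]$ into $(0, \psi(t_1)]$, extended so that the entire zero section is sent to the cone vertex. By construction $\phi$ is a diffeomorphism away from the zero section, $\phi^{-1}(0) \cong S^2$ is the round sphere of radius $c_0 = B(0)$, and $\phi$ is an isometry outside $U_E := (0, t_1) \times S^3$. Boundedness of $|D\phi|$ on all of $E$ follows from $\psi'$ being bounded on $(0, t_1)$ together with the uniform bounds $A(t) \leq Ct$ and $B(t) \leq C$ on $[0, t_1]$, which control the ratios of Hopf-circle and base-$S^2$ lengths between domain and target. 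This completes the construction modulo the transition-region analysis.
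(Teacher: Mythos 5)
Your setup (the Berger--sphere doubly warped ansatz $dt^2+A^2\eta^2+B^2\pi^*g_{S^2}+f^2g_{S^2}$ on $E\times S^2$), your cone--region computations (including the threshold $\alpha<\frac{1-(1-\eps)^2}{(1-\eps)^2}$ and the $S^2$-block condition $1>2\alpha(\alpha+1)\delta^2r^{2\alpha-2}$), and your projection map $\phi(t,\theta)=(\psi(t),\theta)$ all match the paper. But the entire content of the lemma lives in the transition region that you explicitly defer (``requires a hands-on verification''), and the mechanism you sketch for it is both different from the paper's and, in the parameter regime you state, quantitatively untenable. You propose to raise $B'$ from $0$ to $1-\eps$ in a single pass, paying for the harmful $-2B''/B$ in the radial Ricci with a \emph{decreasing} concave $f$. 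A positive function with $f'(0)=0$ and $-f''/f\geq c$ must vanish within length $\pi/(2\sqrt{c})$ (Riccati: $w=-f'/f$ satisfies $w'\geq c+w^2$), so over a transition of length $t_1$ a decreasing $f$ can sustain only $-f''/f\lesssim 1/t_1^2$, with total ``budget'' $\int\sqrt{-f''/f}\,dt\leq\pi/2$. Under the radial constraint $2B''/B\leq -A''/A-2f''/f$, comparison with $B''=\bar c B$, $B(0)=c_0$, $B'(0)=0$ then caps the achievable slope at $B'(t_1)\lesssim c_0\sqrt{\bar c}\sinh(\pi/2)=O(c_0/t_1)$, up to the small help from $A$, whose slope only drops by $\eps$. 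With your stated choices ($c_0,f_0,\delta$ small, $t_1$ large) this cannot reach $1-\eps$. There is a second unaddressed point: $f$ must match $\delta(t-t_0)^\alpha$ in $C^1$ at $t_1$ (a convex kink would create distributional Ricci of the wrong sign, which the $C^1$ gluing lemma cannot remove), so $f$ must turn around and become increasing; on the turnaround set $f''\geq 0$, the radial block loses its only large positive term exactly where $A,B$ are meant to be affine, and you give no replacement.

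The paper circumvents precisely this crunch with a two-stage design you should compare against. First (Step 1.1) it closes $E$ up to a \emph{thin} cone $C(S^3_m)$, $m\leq 10^{-2}$: there the convexity $B$ needs is tiny (final slope only $m$) and is absorbed by the spherical closing $A=\frac{1}{k}\sin(kr)$. Then (Steps 1.2--1.3) it attaches a slowly \emph{growing} concave warping factor $f=\delta_2(1+r^2)^{\alpha_2/2}$, which supplies radial Ricci $\sim\alpha_2/r^2$ over an arbitrarily long range -- its ``budget'' $\int \sqrt{\alpha_2}\,r^{-1}dr$ diverges -- and uses it to reopen the cone angle from $m$ to $1-\eps$ over an exponentially long neck $[r_1,r_3]$ with $rh''\leq 10/\ln r_3$, $\ln r_3\gtrsim (m\alpha_2)^{-1}$, while the constraint $\alpha_2\lesssim m\eps$ keeps the $S^3$-directions positive. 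Only afterwards (Step 1.4) is the warping factor converted to the exact form $\delta r^\alpha$, and the $C^1$ corners are smoothed by Lemma \ref{l:C1_warped_gluing}. So either carry out your one-pass interpolation in full (at which point the Riccati obstruction above forces the transition scale to be comparable to $c_0$, not ``$t_1$ large,'' and all four curvature blocks, including the O'Neill cross terms $-\frac{A'B'}{AB}$ near the zero section, must be checked), or adopt the paper's thin-cone-plus-slow-reopening route; as written, the proof has a genuine gap at its central step.
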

\begin{remark}
	As usual our use of the notation $C(S^3_{1-\epsilon})\times_{\delta r^\alpha} S^2$ means we are looking at the warped product metric on $C(S^3)\times S^2$ given by $g:= dr^2+(1-\epsilon)^2 r^2 g_{S^3}+\delta^2 r^{2\alpha} g_{S^2}$ .
\end{remark}

The proof of the above Lemma is broken down over the remainder of this Section.  
In Step 1.1 of Section \ref{ss:step1.1} we begin by writing down a metric on $(E,g_{E,1})$ with nonnegative Ricci curvature, and which looks like a cone at infinity.  
This cone however may not be close to $\dR^4$ at this stage.

In Step 1.2 of Section \ref{ss:step1.2} we will write down a metric on $E\times S^2$ of the form $g_2=g_{E,2}+f_2^2 g_{S^2}$.  The base metric $g_{E,2} := g_{E,1}$ will simply be the metric from the first step, however we will now equip the metric with a warping $S^2$ factor $f_2(r):= \delta_2 (1+r^2)^{\alpha_2/2}$.  The polynomial growth of the warping factor will add extra curvature which will be useful in flatting out the cone in the third step.

In Step 1.3 of Section \ref{ss:step1.3} we will use the extra curvature provided by the warping factor to slowly increase the cone angle of $g_E$ until it is close to Euclidean.  
In Step 1.4 we will fix the warping factor so that our space becomes isometric to the warped cone $C(S^3_{1-\epsilon})\times_{\delta r^\alpha} S^2$ near infinity.  At several steps we will only build geometries which are globally $C^1$, but we will end the construction of Lemma \ref{l:step1:inductive_step1:2} by applying the $C^1$ smoothing Lemma \ref{l:C1_warped_gluing} in order to fix this issue.

\vspace{.3cm}

\subsection{Step 1.1: Bubble Metric with Positive Ricci}\label{ss:step1.1}

Consider the generating line bundle $E\to S^2$ and let us observe that the unit sphere bundle is diffeomorphic to $S^3$.  In particular, if we remove the zero section then $E\setminus S^2$ is diffeomorphic to $\dR^+\times S^3$.  We will begin by writing our metric in this degenerate coordinate system.  To do so let us choose the canonical left invariant vector fields $X,Y,Z$ on $S^3$, so that they satisfy the commutator relations $[X,Y]=2Z$, $[Y,Z] = 2X$, and $[Z,X] = 2Y$ .  Let $dX,dY,dZ$ denote the dual frames.  We first consider a metric on $\dR^+\times S^3$ of the form

\begin{equation}\label{e:step1.1:metric_ansatz}
	g_{E,1} := dr^2 + A(r)^2 dX^2 + B(r)^2\big(dY^2+dZ^2\big)\, .
\end{equation} 

In order for this to define a smooth metric on $E$ it is required that $A(0)=0$ with $A^{(even)}(0)=0$ and $B(0)>0$ with $B^{(odd)}=0$ .  Our construction for this step is the following:

\begin{lemma}\label{l:step1.1:main}
	Let $g_{E,1}$ be as in \eqref{e:step1.1:metric_ansatz}, and let $0<m<\frac{1}{100}$ with $r_1:= 2$.  Then there exists $A(r),B(r)$ such that
\begin{enumerate}
	\item $A(r)=B(r)$ with $A'(r)=B'(r)=m$ for $r\geq r_1$ ,
	\item $g_{E,1}$ defines a $C^1$ metric on $E$, smooth away from $r=r_1$, with $\Ric\geq 0$ on the smooth part, and $\Ric\geq \frac{k^2}{2}>0$ on $U_{r_1}:= \{r< r_1\}$ for some constant $k=k(m)$.
\end{enumerate} 
\end{lemma}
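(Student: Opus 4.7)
My plan is to proceed by explicit construction. The first task is to compute the Ricci curvature of the doubly-warped Berger metric $g_{E,1} = dr^2 + A^2 dX^2 + B^2(dY^2+dZ^2)$ in terms of $A(r), B(r)$ and their derivatives. Using the left-invariant commutator relations on $S^3$, the standard curvature computation gives formulas of the schematic form
\begin{align*}
\Ric(\partial_r,\partial_r) &= -\tfrac{A''}{A} - 2\tfrac{B''}{B}\,,\\
\tfrac{1}{A^2}\Ric(X,X) &= -\tfrac{A''}{A} - 2\tfrac{A'B'}{AB} + 2\tfrac{A^2}{B^4}\,,\\
\tfrac{1}{B^2}\Ric(Y,Y) = \tfrac{1}{B^2}\Ric(Z,Z) &= -\tfrac{B''}{B} - \tfrac{A'B'}{AB} - \tfrac{(B')^2}{B^2} + \tfrac{4}{B^2} - 2\tfrac{A^2}{B^4}\,,
\end{align*}
which will be the basis for all curvature checks. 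The smoothness condition at $r=0$ is that $A$ extends as an odd function with $A'(0)=1$ and $B$ as an even function with $B(0)>0$; the $C^1$ condition at $r_1$ is $A(r_1)=B(r_1)$ and $A'(r_1)=B'(r_1)=m$.

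The outer region $r\geq r_1$ is straightforward: take $A(r)=B(r)=mr+c$ for the appropriate constant $c$, which reduces the metric to the round cone $dr^2+A(r)^2 g_{S^3}$. For $m<1$, $A''=0$ yields $\Ric(\partial_r,\partial_r)=0$, while on $TS^3$ the Ricci is $2(1-m^2)g_{S^3}>0$, so the cone has nonnegative Ricci. For the inner region $[0,r_1]$, the natural model is inspired by a $U(2)$-invariant metric on the blow-up $\tilde{\mathbb{C}}^2 \cong \mathbb{CP}^2\setminus\{pt\}$, since the Fubini--Study metric on $\mathbb{CP}^2$ has constant positive Ricci. Explicitly, I will try an ansatz of the form $A(r)=\tfrac{1}{2k}\sin(2kr)h(r)$, $B(r)=\tfrac{1}{k}\cos(kr)$ (or a similar FS-like choice) on a core region near $r=0$, where $k=k(m)$ is chosen so that the scaled FS-type geometry has Ricci uniformly $\geq k^2/2$, and then transition on an intermediate annulus where $A$ is slowly increased to meet $B$ and both slopes are bent to the value $m$.

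The main obstacle is verifying the strict Ricci lower bound $\Ric \geq k^2/2$ throughout the interior and achieving the $C^1$ matching at $r=r_1$ simultaneously. The helpful feature is that the non-abelian term $+2A^2/B^4$ in $\Ric(X,X)$ is positive and grows as the Hopf fiber inflates, while the concavity $A'' \leq 0$, $B'' \leq 0$ gives $\Ric(\partial_r,\partial_r) \geq 0$. The delicate point is the $(Y,Y)$ component: the term $-2A^2/B^4$ becomes large and negative if $A/B^2$ grows, and the $+4/B^2$ curvature-of-$S^2$ contribution must dominate. This forces $B$ to not grow too fast, while the matching $A(r_1)=B(r_1)=mr_1+c$ constrains the geometry. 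The strategy will be to fix a small $m$, choose $k=k(m)$ so the FS-core region provides ample positive Ricci, and then construct the transition on a short interval $[r_0,r_1]$ where an explicit interpolation of $A, B$ (e.g.\@ by convex combinations rounded into the cone) retains $\Ric \geq k^2/2$ by continuity from the FS model. The output is only $C^1$ at $r_1$, matching the claim of the lemma, with the final smoothing deferred to the $C^1$ Gluing Lemma \ref{l:C1_warped_gluing} in subsequent steps.
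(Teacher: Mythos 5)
Your setup is the same as the paper's: the same doubly warped ansatz, the same Ricci formulas (your $(Y,Y)$ expression agrees with the paper's after rewriting $\frac{2}{B^2}+\frac{2B^2-2A^2}{B^4}=\frac{4}{B^2}-\frac{2A^2}{B^4}$), the correct closing conditions at $r=0$, the outer cone $A=B=mr+c$, $C^1$ matching at $r_1$, and smoothing deferred to Lemma \ref{l:C1_warped_gluing}. The difference is the core model and, more importantly, that you never actually construct or estimate the transition region, which is where the entire content of the lemma lives.

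The genuine gap is the claim that an interpolation on $[r_0,r_1]$ "retains $\Ric\geq k^2/2$ by continuity from the FS model." Continuity cannot do this work: the transition is not a small perturbation of the Fubini--Study core, since with your choice $B(r)=\frac1k\cos(kr)$ one has $B'<0$ on the core while the matching at $r_1$ forces $B'(r_1)=m>0$, so $B''>0$ by a definite integrated amount somewhere in between; likewise $A'$ must be brought down to the specific value $m<\frac1{100}$ while $A$ simultaneously rises to meet $B$. This also contradicts your stated mechanism for $\Ric(\partial_r,\partial_r)\geq 0$, which you derive from $A''\leq 0$, $B''\leq 0$ -- an assumption your own construction cannot maintain through the bend. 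What is needed (and what the paper supplies) is a quantitative choice that makes the bad terms explicitly small: the paper fixes $k$ by $m=\cos(kr_1)$ and takes $A(r)=\frac1k\sin(kr)$ on $[0,r_1]$, so the slope condition $A'(r_1)=m$ holds automatically with no separate transition for $A$; it takes $B\equiv b$ constant on $[0,r_1/2]$ (so $B'\geq 0$ throughout and the core is not FS but a twisted product with $-A''/A=k^2$ and $\frac{2}{B^2}\geq\frac{2k^2}{1-m^2}$), and bends $B$ only on $[r_1/2,r_1]$ with the explicit bound $0\leq B''\leq \frac{4m}{r_1}$, after which each Ricci component is checked to be $\geq k^2$ times a constant minus $O(m)$ corrections, which is positive precisely because $m<\frac1{100}$. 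Your proposal could likely be completed along similar lines, but as written the transition estimates -- the step where $\Ric\geq k^2/2$ must be verified against the $-B''/B$, $-A'B'/AB$ and $-2A^2/B^4$ terms with the forced slope changes -- are missing, so the proof is incomplete.
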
 
\begin{remark}
	Note that for $r\geq r_1$ we have that $g_{E,1} = dr^2 + A(r)^2 g_{S_3}$ is exactly the cone metric on $C(S^3_m)$ .
\end{remark}

\begin{proof}[Proof of Lemma \ref{l:step1.1:main}]
Let $k>0$ be the smallest number satisfying the relation
\begin{equation}
    m=\cos(kr_1)\,.
\end{equation}
Clearly we then have $\frac{\pi}{3}\leq kr_1<\frac{\pi}{2}$.  Let us define $A(r)$ by the formula
\begin{align}
    A(r)=\begin{cases}
    	\frac{1}{k}\sin(kr)&\text{ if } r\leq r_1\,;\\
    	\frac{\sqrt{1-m^2}}{k}+m(r-r_1)&\text{ if } r\geq r_1\, .
    \end{cases}
\end{align}
To define $B(r)$ let $b=b(m)< \frac{\sqrt{1-m^2}}{k}$ be taken so that we can find a smooth function with
\begin{align}
	B(r) = \begin{cases}
 b&\text{ if } r\leq r_1/2\, ,\\
 0\leq B''\leq \frac{4m}{r_1}&\text{ if }r\in [r_1/2,r_1]\,,\\
\frac{\sqrt{1-m^2}}{k}+m(r-r_1)&\text{ if }r\geq r_1\,.
 \end{cases}
\end{align}

Note that in this case we necessarily have $0\leq B'\leq m\leq A'$ and $B\geq A$.  Together with $kr_1<\frac{\pi}{2}$ and $m<\frac{1}{100}$ we can estimate
\begin{align}
    b=B(\frac{r_1}{2})\geq B(r_1)-m(r_1-\frac{r_1}{2})\geq\frac{1}{k}(\sqrt{1-m^2}-\frac{m\pi}{4})>\frac{1}{2k}\, .
\end{align}
Observe from the behavior of $A$ and $B$ as $r\to 0^+$ that $g_{E,1}$ indeed defines a smooth metric on $E$ near the zero section (see e.g. \cite{Per97}).\\

Let us now estimate the Ricci curvatures.
For $r<\frac{r_1}{2}$, we have (note that $B'=0=B''$ here) 
\begin{align}
    \Ric(\partial_r,\partial_r) &= -\frac{A''}{A} - 2\frac{B''}{B}=k^2\,,\notag\\
    \Ric\left(\frac{X}{|X|}, \frac{X}{|X|}\right) &= -\frac{A''}{A} - 2\frac{A'B'}{AB} + 2\frac{A^2}{B^4}\geq k^2\,,\notag\\
    \Ric\left(\frac{Y}{|Y|}, \frac{Y}{|Y|}\right) = \Ric\left(\frac{Z}{|Z|}, \frac{Z}{|Z|}\right) &= -\frac{B''}{B} - \frac{A'B'}{AB} - \left(\frac{B'}{B}\right)^2 +\frac{2}{B^2}+\frac{2B^2 - 2A^2}{B^4}\geq \frac{2}{B(r_1)^2}=\frac{2k^2}{1-m^2}\,,
\end{align}
which is clearly appropriately positive.  For $\frac{r_1}{2}\leq r<r_1$ we can estimate
\begin{align}
    \Ric(\partial_r,\partial_r) &= -\frac{A''}{A} - 2\frac{B''}{B}
    \geq k^2-2\frac{\frac{4m}{r_1}}{\frac{1}{2k}}
    \geq k^2(1-\frac{48m}{\pi})\,,\notag\\
    \Ric\left(\frac{X}{|X|}, \frac{X}{|X|}\right) &= -\frac{A''}{A} - 2\frac{A'B'}{AB} + 2\frac{A^2}{B^4}\geq k^2-2k\cot(k\frac{r_1}{2})\frac{m}{\frac{1}{2k}}+\frac{2k^2\sin^2(k\frac{r_1}{2})}{(1-m^2)^2}\notag\\
    &\geq k^2(1-4\sqrt{3}m+\frac{1}{2(1-m^2)^2})\,,\notag\\
    \Ric\left(\frac{Y}{|Y|}, \frac{Y}{|Y|}\right) = \Ric\left(\frac{Z}{|Z|}, \frac{Z}{|Z|}\right) &= -\frac{B''}{B} - \frac{A'B'}{AB} - \left(\frac{B'}{B}\right)^2 +\frac{2}{B^2}+\frac{2B^2 - 2A^2}{B^4}\notag\\
    &\geq -\frac{\frac{4m}{r_1}}{\frac{1}{2k}}-k\cot(k\frac{r_1}{2})\frac{m}{\frac{1}{2k}}-(\frac{m}{\frac{1}{2k}})^2+\frac{2k^2}{1-m^2}\notag\\
    &\geq k^2(\frac{2}{1-m^2}-\frac{24m}{\pi}-2\sqrt{3}m-4m^2)\,.
\end{align}
Again observe that for $m<\frac{1}{100}$ we have the appropriate positivity.  Finally for $r>r_1$, we have (note that $A = B$ is affine here)
\begin{align}
    \Ric(\partial_r,\partial_r) &=0\,,\notag\\
    \Ric\left(\frac{X}{|X|}, \frac{X}{|X|}\right) &= 2\frac{1-(A')^2}{A^2} =2\frac{1-m^2}{A^2}\,.
\end{align}
Since $m<\frac{1}{100}$ we have $\Ric\geq 0$ for $r> r_1$.
This completes the construction.
\end{proof}

\vspace{.3cm}

\subsection{Step 1.2: Bubble Metric with $S^2$ Warping Factor}\label{ss:step1.2}

Recall we ended the last step by constructing a metric $g_{E,1}$ on $E\to S^2$ which has nonnegative Ricci curvature and is a cone $C(S^3_m)$ outside a compact subset.  The sphere $S^3_m$ in this cone is however potentially quite small, and we will want to take the radius of this sphere closer to $1$ in order to geometrically flatten out the space.\\

In this next step of the construction, we want to add to $g_{E,1}$ a warped $S^2$ factor.  This factor will add additional curvature to the radial directions which will be used in subsequent sections to flatten out our cone structure.  In this Step we will look for a metric of the form
\begin{align}\label{e:step1.2:metric_ansatz}
	g_{2} &:= g_{E,2}+f_2(r)^2 g_{S^2}\notag\\
	&= g_{E,1}+f_2(r)^2 g_{S^2}\, .
\end{align}

In particular, we will not change the metric on our base $E$ in this Step.  The warping factor $f_2(r)$ will be given explicitly by
\begin{align}\label{e:step1.2:metric_ansatz2}
	f_2(r)=\delta_2(1+r^2)^\frac{\alpha_2}{2}\, .
\end{align}

Our main purpose in this Step is to see that $g_2$ always has positive Ricci curvature:

\begin{lemma}\label{l:step1.2:main}
    Let $g_2$ be as in \eqref{e:step1.2:metric_ansatz} and \eqref{e:step1.2:metric_ansatz2}.
    Then for any $0<\alpha_2\leq \alpha_2(m)\leq \frac{1}{2}$ and $0<\delta_2<1$,
\begin{equation}\label{metric_bubble_with_S2_part}
    g_2=g_{E,2}+f_2^2g_{S^2}=dr^2+A(r)^2dX^2+B(r)^2(dY^2+dZ^2)+f_2(r)^2g_{S^2}
\end{equation} 
defines a $C^1$ metric on $E\times S^2$, smooth away from $r=r_1$ with $\Ric>0$ on the smooth part.
\end{lemma}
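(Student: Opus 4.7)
The plan is to exploit the block-diagonal splitting of Ricci under the warped product $g_2 = g_{E,1} + f_2^2 g_{S^2}$. Using the warped-product formulas recalled in the Preliminaries,
\begin{align*}
\left.\Ric_{g_2}\right|_{TE} &= \Ric_{g_{E,1}} - \frac{2\Hess_{g_{E,1}} f_2}{f_2}\, ,\\
\left.\Ric_{g_2}\right|_{TS^2} &= \Big(1 - |\nabla f_2|^2 - f_2\Delta_{g_{E,1}} f_2\Big) g_{S^2}\, ,
\end{align*}
and since $f_2(r) = \delta_2(1+r^2)^{\alpha_2/2}$ is radial, both corrections reduce to explicit expressions in $f_2', f_2''$ and the principal curvatures $A'/A, B'/B$ of the level sets $\{r = \text{const}\}$. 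In the orthonormal frame $(\partial_r, X/A, Y/B, Z/B)$ the second fundamental form is diagonal, as is $\Ric_{g_{E,1}}$ by the symmetries of the warping, so positivity of $\left.\Ric_{g_2}\right|_{TE}$ can be checked eigendirection by eigendirection. Two structural simplifications are worth noting upfront: the identity $\Hess f_2/f_2 = \Hess \ln f_2 + d\ln f_2 \otimes d\ln f_2$ shows the $TE$ block is $\delta_2$-independent, while every summand of $\left.\Ric_{g_2}\right|_{TS^2} - g_{S^2}$ scales as $\delta_2^2$.

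For the $TE$ block, direct computation gives $f_2'/f_2 = \alpha_2 r/(1+r^2)$ and $f_2''/f_2 = \alpha_2[1+(\alpha_2-1)r^2]/(1+r^2)^2$, so every Hessian correction is of size $O_m(\alpha_2)$. First I would handle the inner region $r \leq r_1 = 2$: Lemma~\ref{l:step1.1:main} gives a lower bound $\Ric_{g_{E,1}} \geq \kappa(m) > 0$ in every direction, and the apparent singularity $A'/A \sim 1/r$ near the zero section is tamed by the $r$-factor in $f_2'$, so the correction can be dominated by shrinking $\alpha_2 \leq \alpha_2(m)$. Second I would treat the outer region $r > r_1$, where the radial direction is subtle since $\Ric_{g_{E,1}}(\partial_r, \partial_r) = 0$; the key point is that $-2f_2''/f_2 = 2\alpha_2[(1-\alpha_2)r^2 - 1]/(1+r^2)^2$ is strictly positive whenever $(1-\alpha_2)r^2 > 1$, which for $\alpha_2 \leq 1/2$ follows automatically from $r \geq 2$. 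In the transverse directions the explicit bound $\Ric_{g_{E,1}} = 2(1-m^2)/A^2$ dominates the Hessian correction $|2 f_2'(A'/A)/f_2|$ by a ratio that approaches $\alpha_2 m^2/(1-m^2)$ at infinity, uniformly small for $\alpha_2 \leq \alpha_2(m)$.

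For the $TS^2$ block we need $|\nabla f_2|^2 + f_2\Delta f_2 < 1$. Every term scales as $\delta_2^2$, so the sum equals $\delta_2^2 Q(r, \alpha_2, m)$, and since $\delta_2 < 1$ it suffices to prove $Q < 1$ uniformly in $r$. Direct calculation gives $(f_2'/\delta_2)^2 \leq \alpha_2^2$, while $f_2 f_2''/\delta_2^2 = \alpha_2(1+r^2)^{\alpha_2-2}[1+(\alpha_2-1)r^2]$ is bounded by a universal constant times $\alpha_2$ for $\alpha_2 \leq 1/2$, and the term $f_2 f_2' H/\delta_2^2$ (with mean curvature $H := A'/A + 2B'/B$) is bounded by $C(m)\alpha_2$ using the same $r \cdot (A'/A) = O(1)$ cancellation near the origin together with the decay $r^{2\alpha_2 - 2} \to 0$ at infinity. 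Taking $\alpha_2 \leq \alpha_2(m)$ small enough then forces $Q < 1$ uniformly.

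The only real obstacle is bookkeeping: one must choose a single threshold $\alpha_2(m)$ that simultaneously handles the inner and outer $TE$ estimates and the $TS^2$ estimate, with the $m$-dependence entering only through the explicit bounds from Lemma~\ref{l:step1.1:main}. Beyond the radial form of $f_2$ and these explicit bounds no additional structure is needed. The $C^1$ global regularity of $g_2$ on $E\times S^2$ and its smoothness away from $\{r = r_1\}$ are inherited immediately from the corresponding properties of $g_{E,1}$ together with the smoothness of $f_2(r) = \delta_2(1+r^2)^{\alpha_2/2}$, which extends smoothly across the zero section since it depends on $r^2$.
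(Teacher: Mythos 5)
Your proposal is correct and follows essentially the same route as the paper: the warped-product Ricci splitting from the Preliminaries, diagonal frame-by-frame estimates using the bounds of Lemma \ref{l:step1.1:main}, a split into the regions $r\leq r_1$ and $r> r_1$ (with the $r\,A'/A$ cancellation near the zero section and the concavity of $f_2$ for $r\geq 2$), and smallness only in $\alpha_2\leq\alpha_2(m)$ with $\delta_2<1$ handling the $S^2$ block. The only differences are organizational (factoring out $\delta_2^2$ in the $TS^2$ block and using the $\Hess\ln f_2$ identity), not a different argument.
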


\begin{proof}
Note first that as $f_2^{(\text{odd})}(0)=0$, the metric is smooth near $r=0$.  Calculations together with the results from the previous subsection give that for $r <r_1$:
\begin{align}
    \frac{f'_2}{f_2}&=\frac{\alpha_2 r}{1+r^2}\leq \frac{\alpha_2}{2}\,,\notag\\
    \frac{f''_2}{f_2}&=\frac{\alpha_2}{1+r^2}(1+\frac{(\alpha_2-2)r^2}{1+r^2})\leq  \alpha_2\,,\notag\\
    \frac{A'f'_2}{Af_2}&=\frac{k\cos(kr)\alpha_2 r}{\sin(kr)(1+r^2)}\leq \frac{\alpha_2 \cos(kr)}{1+r^2}\frac{\pi}{2}\leq \frac{\pi}{2}\alpha_2\,,\notag\\
    \frac{B'f'_2}{Bf_2}&\leq \frac{m}{B(\frac{r_1}{2})}\frac{\alpha_2}{2}\leq mk\alpha_2\, .
\end{align}
We now estimate the Ricci curvature of the whole space for $r<r_1$ (with Lemma \ref{l:step1.1:main}):
\begin{align}
\Ric(\partial_r,\partial_r) &= -\frac{A''}{A} - 2\frac{B''}{B} -2\frac{f''_2}{f_2}\geq \frac{k^2}{2}-2\alpha_2\,,\notag\\
\Ric\left(\frac{X}{|X|}, \frac{X}{|X|}\right) &= -\frac{A''}{A} - 2\frac{A'B'}{AB} + 2\frac{A^2}{B^4} -2\frac{A'f'_2}{Af_2} \geq \frac{k^2}{2}-\alpha_2 \pi\,,\notag\\
\Ric\left(\frac{Y}{|Y|}, \frac{Y}{|Y|}\right) = \Ric\left(\frac{Z}{|Z|}, \frac{Z}{|Z|}\right) &= -\frac{B''}{B} - \frac{A'B'}{AB} - \left(\frac{B'}{B}\right)^2 + 2\frac{2B^2 - A^2}{B^4}-2\frac{B'f'_2}{Bf_2}\geq \frac{k^2}{2}-2mk\alpha_2\,,\notag\\
f^{-2}\Ric(\partial_\alpha, \partial_\alpha)&=\frac{1}{f^2_2}-\frac{f''_2}{f_2}-\frac{(f')^2}{f^2}-\frac{A'f'_2}{Af_2}-2\frac{B'f'_2}{Bf_2}\notag\\
&\geq \frac{1}{\delta_2^2(1+r_1^2)^{\alpha_2}}-\alpha_2-\frac{\alpha_2^2}{4}-\frac{\pi}{2}\alpha_2-2mk\alpha_2\,.
\end{align}

Observe for any $0<\alpha_2\leq \alpha_2(m)$ and any $0<\delta_2<1$, we have $\Ric>0$. 
For $r>r_1=2$ we use different estimates (recall that $A=B$ and they are affine here, and we impose $\alpha_2\leq\frac{1}{2}$):
\begin{align}
    \frac{f'_2}{f_2}&=\frac{\alpha_2 r}{1+r^2}\,,\notag\\
    \frac{f''_2}{f_2}&=\frac{\alpha_2}{1+r^2}(1+\frac{(\alpha_2-2)r^2}{1+r^2})
    \leq \frac{\alpha_2}{1+r^2}(1+\frac{(\alpha_2-2)r_1^2}{1+r_1^2})
    \leq \frac{-\alpha_2}{5(1+r^2)}\,,\notag\\
    \frac{A'f'_2}{Af_2}&=\frac{m\alpha_2 r}{A(1+r^2)}\,.
\end{align}
Thus we can estimate the Ricci curvatures for $r>r_1$:
\begin{align}
\Ric(\partial_r,\partial_r) &=-3\frac{A''}{A}-2\frac{f''_2}{f_2}>0\,,\notag\\
\Ric\left(\frac{X}{|X|}, \frac{X}{|X|}\right) &= -\frac{A''}{A}+ 2\frac{1-(A')^2}{A^2} -2\frac{A'f'_2}{Af_2}=2\frac{1-m^2}{A^2}-2\frac{m\alpha_2 r}{A(1+r^2)}\geq \frac{2}{Ar}(kr_1\sqrt{1-m^2}-m\alpha_2)\,,\notag\\
f^{-2}\Ric(\partial_\alpha, \partial_\alpha)&=\frac{1}{f^2_2}-\frac{f''_2}{f_2}-\frac{(f_2')^2}{f_2^2}-3\frac{A'f'_2}{Af_2}\geq\frac{1}{\delta_2^2(1+r^2)^{\alpha_2}}+\frac{\alpha_2}{5(1+r^2)}-(\frac{\alpha_2 r}{1+r^2})^2-3\frac{m\alpha_2 r}{A(1+r^2)}\notag\\
&\geq \frac{1}{1+r^2}\left(\frac{1}{\delta_2^2}+\frac{\alpha_2}{5}-\alpha_2^2-3\alpha_2\right)\,.
\end{align}
Thus we again have for any $0<\alpha_2\leq \alpha_2(m)$ and $0<\delta_2<1$ that $\Ric>0$. 
\end{proof}

\vspace{.3cm}

\subsection{Step 1.3 Flattening the Cone}\label{ss:step1.3}

Our goal in this Step of the construction is to look for a metric on $E\times S^2$ of the form
\begin{align}\label{e:step1.3:metric_ansatz}
	g_3:= g_{E,3}+ f_3(r)^2 g_{S^2}\, ,
\end{align}
where the warping factor $f_3(r):= f_2(r) = \delta_2(1+r^2)^{\alpha_2/2}$ will remain unchanged, up to further restrictions on the parameters $\delta_2$ and $\alpha_2$.  The base metric $g_{E,3}$ should look like a flat cone $C(S^3_{1-\epsilon})$ outside some large radius, and will more generally satisfy 
\begin{align}\label{e:step1.3:metric_ansatz2}
	g_{E,3} = \begin{cases}
 	g_{E,2}&\text{ if } r\leq r_1\,,\\
 	dr^2+h_3(r)^2 g_{S^3}&\text{ if } r\geq r_1\,,
 \end{cases}
\end{align}
where the warping function $h_3$ will be smooth on $[r_1,\infty)$ and satisfy 
\begin{align}\label{e:step1.3:metric_ansatz3}
	\begin{cases}
h_3(r_1)=A(r_1)\,,\quad h'_3(r_1)=m\,,\\
0\leq r h_3''\leq 10/\ln r_3 &\text{ if } r\in [r_1,r_3]\,,\\
h'_3(r)=1-\epsilon &\text{ if }r\geq r_3\,.
\end{cases}
\end{align}
The following lemma will tell us that for $r_3$ sufficiently large the resulting metric will have positive Ricci curvature:

\begin{lemma}\label{l:step1.3:main}
	Let $g_3$ satisfy \eqref{e:step1.3:metric_ansatz}, \eqref{e:step1.3:metric_ansatz2}, \eqref{e:step1.3:metric_ansatz3} with $\alpha_2<\alpha_2(\epsilon,m)\leq \frac{1}{2}$ , $r_3\geq r_3(m,\alpha_2,\epsilon)$ and $\delta_2<\delta_2(m)$.  Then $g_3$ is smooth away from $r=r_1$, globally $C^1$, and satisfies $\Ric> 0$ on the smooth region.
\end{lemma}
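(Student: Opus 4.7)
The plan is to check the $C^1$ matching across $r = r_1$ and to verify $\Ric > 0$ separately on the two smooth regions. For $r \leq r_1$ the metric $g_3$ coincides with $g_2$, and Lemma \ref{l:step1.2:main} already supplies $\Ric > 0$ provided $\alpha_2 < \alpha_2(m)$ and $\delta_2 < 1$, which are compatible with the further restrictions imposed below. For the $C^1$ matching, the key observation is that Lemma \ref{l:step1.1:main} arranges $A \equiv B$ on $r \geq r_1$, so in a one-sided neighborhood of $r = r_1^-$ the metric $g_{E,2}$ is already of the form $dr^2 + A(r)^2 g_{S^3}$; since the hypotheses $h_3(r_1) = A(r_1)$ and $h_3'(r_1) = m = A'(r_1)$ are imposed, the full metric is globally $C^1$. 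Smoothness away from $r = r_1$ is immediate, and generically $A''(r_1^-) = -k\sin(kr_1) \neq h_3''(r_1^+)$, which is the source of the $C^1$-only regularity.

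For $r > r_1$ the metric is the doubly warped product $dr^2 + h_3(r)^2 g_{S^3} + f_3(r)^2 g_{S^2}$, so I would apply the same formulas already used in Lemma \ref{l:step1.2:main}:
\begin{align*}
\Ric(\partial_r, \partial_r) &= -3\frac{h_3''}{h_3} - 2\frac{f_3''}{f_3}\, , \\
\left.\Ric\right|_{TS^3} &= -\frac{h_3''}{h_3} + \frac{2(1 - (h_3')^2)}{h_3^2} - 2\frac{h_3' f_3'}{h_3 f_3}\, , \\
f_3^{-2}\left.\Ric\right|_{TS^2} &= \frac{1}{f_3^2} - \frac{f_3''}{f_3} - \frac{(f_3')^2}{f_3^2} - 3\frac{h_3' f_3'}{h_3 f_3}\, .
\end{align*}
The inputs are $0 \leq h_3'' \leq 10/(r \ln r_3)$, $m \leq h_3' \leq 1 - \epsilon$ monotonically (hence $h_3(r) \geq m r/2$ once $r$ is a moderate multiple of $r_1$), together with the warping estimates from Lemma \ref{l:step1.2:main}, in particular $-f_3''/f_3 \gtrsim \alpha_2/(1+r^2)$ and $f_3'/f_3 \leq \alpha_2 r/(1+r^2)$. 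The radial Ricci is then positive because the gain $-2f_3''/f_3 \gtrsim \alpha_2/r^2$ dominates the loss $3 h_3''/h_3 \lesssim 1/(m r^2 \ln r_3)$ once $\ln r_3$ is large in terms of $m$ and $\alpha_2$. The $S^2$ Ricci is controlled as in Lemma \ref{l:step1.2:main}: the term $1/f_3^2 = \delta_2^{-2}(1+r^2)^{-\alpha_2}$ beats every other contribution once $\delta_2 < \delta_2(m, \alpha_2, \epsilon)$ is small enough.

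The main obstacle I expect is the $\left.\Ric\right|_{TS^3}$ direction, which requires balancing three competing terms. In the cone region $r \geq r_3$ we have $h_3'' = 0$ and $h_3(r) = (1-\epsilon)(r-c)$ for some $c$ of order $r_3$, so the positive contribution $2(1-(h_3')^2)/h_3^2 = 2(2\epsilon - \epsilon^2)/[(1-\epsilon)^2(r-c)^2]$ must dominate the cross term $2h_3' f_3'/(h_3 f_3) \lesssim 2\alpha_2/r^2$; since both scale like $1/r^2$, this forces $\alpha_2 < \alpha_2(\epsilon)$. On the transition region $r_1 \leq r \leq r_3$, the same positive contribution remains (using $h_3' \leq 1-\epsilon$) but now competes with $-h_3''/h_3 \lesssim 1/(m r^2 \ln r_3)$, which is absorbed by taking $r_3 = r_3(m, \alpha_2, \epsilon)$ large. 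Selecting the parameters in the order $m \to \epsilon \to \alpha_2(m,\epsilon) \to r_3(m,\alpha_2,\epsilon) \to \delta_2(m,\alpha_2,\epsilon)$ then delivers $\Ric_{g_3} > 0$ on the entire smooth region, completing the proof.
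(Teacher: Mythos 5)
Your proposal is correct and follows essentially the same route as the paper's proof: the same doubly-warped Ricci formulas, the same inputs ($0\leq rh_3''\leq 10/\ln r_3$, $m\leq h_3'\leq 1-\epsilon$, $h_3\gtrsim mr$, and the $f_3$ estimates inherited from Lemma \ref{l:step1.2:main}), and the same parameter ordering $\alpha_2\leq\alpha_2(m,\epsilon)$, then $r_3\geq r_3(m,\alpha_2,\epsilon)$, then $\delta_2$ small. The only nitpick is that $A\equiv B$ holds for $r\geq r_1$ but not on a left neighborhood of $r_1$, so the global $C^1$ claim rests on the endpoint identities $A(r_1)=B(r_1)=h_3(r_1)$ and $A'(r_1)=B'(r_1)=h_3'(r_1)=m$ rather than on the metric already being rotationally symmetric just below $r_1$ --- a matching you do effectively invoke.
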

\begin{proof}
	We will focus our computations in the range $r\in [r_1,r_3]$, as in the range $r\in [r_3,\infty)$ the metric $g_{E,3}$ is again conic and the estimate will be similar as in the previous subsection.  Let us begin by computing the Ricci curvature of the ansatz $g_3= dr^2+h_3(r)^2 g_{S^3}+f_3(r)^2 g_{S^2}$ as:
\begin{align}\label{e:step1.3:Ricci}
	\Ric_{rr} &= -3\frac{h_3''}{h_3}-2\frac{f_3''}{f_3}\, ,\notag\\
	\Ric_{ii} & = 2\frac{1-(h'_3)^2}{h_3^2}-\frac{h_3''}{h_3}-2\frac{h_3'}{h_3}\frac{f_3'}{f_3} \, ,\notag\\
	\Ric_{\alpha\alpha} & = \frac{1-(f'_3)^2}{f_3^2}-\frac{f_3''}{f_3}-3\frac{f_3'}{f_3}\frac{h_3'}{h_3} \, .
\end{align}

Let us impose the restriction $\alpha_2\leq \frac{1}{2}$ and calculate $mr_1\leq A(r_1)\leq \frac{9}{10}r_1\leq (1-\epsilon)r_1$.  Then in the range $r\in[r_1,r_3]$, let us observe the estimates:
\begin{align}
	\frac{f'_3}{f_3}\frac{h'_3}{h_3}&\leq \frac{\alpha_2r}{1+r^2}\frac{1-\epsilon}{m(r-r_1)+A(r_1)}\leq \frac{\alpha_2}{m}\frac{1}{r^2}\,,\notag\\
	\frac{1-(h_3')^2}{(h_3)^2}&\geq \frac{1-(1-\epsilon)^2}{((1-\epsilon)(r-r_1)+A(r_1))^2}\geq \frac{\epsilon}{r^2}\,,\notag\\
	\frac{1-(f_3')^2}{(f_3)^2}&=\frac{1}{\delta_2^2(1+r^2)^{\alpha_2}}-\frac{\alpha_2^2r^2}{(1+r^2)^2} \geq \frac{1}{\delta_2^2r^{2\alpha_2}}\left(\frac{r^2}{1+r^2}\right)^{\alpha_2}-\frac{1}{4r^2}\geq\frac{1}{2\delta_2^2r^{2\alpha_2}}\geq \frac{1}{2\delta_2^2r}\,,\notag\\
	\Big|\frac{h''_3}{h_3}\Big|&\leq \frac{10}{m \ln r_3}\frac{1}{r^2}\,,\notag\\
	\frac{f''_3}{f_3} &=\frac{\alpha_2}{1+r^2}(1+\frac{(\alpha_2-2)r^2}{1+r^2})
    \leq \frac{-\alpha_2}{5(1+r^2)}\leq - \frac{4\alpha_2}{25}\frac{1}{r^2}\, . 
\end{align}
If we plug these estimates into \eqref{e:step1.3:Ricci} then we arrive at
\begin{align}
	\Ric_{rr}&\geq \Big(\frac{8\alpha_2}{25} - \frac{30}{m\ln r_3}\Big)\frac{1}{r^2}\, ,\notag\\
	\Ric_{ii}&\geq  \Big(2\epsilon-\frac{10}{m\ln r_3}-\frac{2\alpha_2}{m}\Big)\frac{1}{r^2}\, ,\notag\\
	\Ric_{\alpha\alpha} &\geq \frac{1}{2\delta_2^2 r}+\Big(\frac{4}{25} -\frac{3}{m}\Big)\frac{\alpha_2}{r^2}
    \geq \frac{1}{2\delta_2^2 r}+\frac{1}{2}\Big(\frac{4}{25} -\frac{3}{m}\Big)\frac{1}{r}\, .
\end{align}
It follows from the first inequality that if $r_3\geq r_3(m,\alpha_2)$ then $\Ric_{rr}>0$.  It follows from the second inequality that if $\alpha_2\leq \alpha_2(m,\epsilon)$ and $r_3\geq r_3(m,\epsilon)$ then $\Ric_{ii}>0$.  Finally we see from the last equation that if $\delta_2\leq \delta_2(m)$ that $\Ric_{\alpha\alpha}>0$.

\end{proof}

\vspace{.3cm}

\subsection{Step 1.4:  The Warped Cone Metric}\label{ss:step1.4}

In the last step of the construction we have built a global metric $g_3$ on $E\times S^2$ such that outside the compact set $U_3:= \{r\leq r_3\}$, the metric $g_3$ can be written as
\begin{align}
	g_3 = dr^2+(1-\epsilon)^2 (r-R_3)^2 g_{S^3} +\delta_2^2 (1+r^2)^{\alpha_2} g_{S^2}\, ,
\end{align}
where $R_3$ solves $h(r_3) = (1-\epsilon)(r_3-R_3)$.  Observe that $R_3>0$ under our assumptions of the parameters.  Our goal in this Step of the construction is to build a metric $g_4$ which agrees with $g_3$ for $r\leq r_3$, but for $r\geq r_3$ should take the form
\begin{align}\label{e:step1.4:metric_ansatz}
	g_4 = dr^2+(1-\epsilon)^2 (r-R_3)^2 g_{S^3} +f_4(r)^2 g_{S^2}\, ,
\end{align}
where our warping factor satisfies
\begin{align}\label{e:step1.4:metric_ansatz2}
	f_4(r) = \begin{cases}
 	f_3(r)&\text{ if } r\leq r_3\, ,\\
 	\delta (r-R_3)^{\alpha}&\text{ if } r\geq r_3\, .
 \end{cases}
\end{align}
We want our warping function to be globally $C^1$, and thus we will choose the constants
\begin{align}
	\alpha:=\alpha_2\frac{r_3\frac{h(r_3)}{1-\epsilon}}{1+r_3^2}<\alpha_2\,,\quad \delta:= \delta_2\frac{ (1+r_3^2)^\frac{\alpha_2}{2}}{(\frac{h(r_3)}{1-\epsilon})^\alpha}\,.
\end{align}

Our final Lemma is that for $\alpha_2$ and $\delta_2$ sufficiently small our new metric $g_4$ has positive Ricci curvature:

\begin{lemma}
	Let $g_4$ satisfy \eqref{e:step1.4:metric_ansatz} and \eqref{e:step1.4:metric_ansatz2}. If we further choose $\alpha_2\leq \alpha_2(\epsilon)$ and $\delta_2\leq \delta_2(\alpha_2)$ then for $r>r_3$ we have that $\Ric>0$.
\end{lemma}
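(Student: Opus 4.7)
My plan is to directly compute each Ricci component of $g_4$ on the region $r > r_3$ and verify positivity component-by-component, reading off the required smallness of $\alpha_2$ and $\delta_2$ from the resulting inequalities. Setting $s := r - R_3$, the metric on this region has the clean warped-cone form $g_4 = ds^2 + (1-\epsilon)^2 s^2 g_{S^3} + \delta^2 s^{2\alpha} g_{S^2}$, which is precisely the prescribed asymptotic geometry. Treating this as a warped product of the cone $(X,h)=C(S^3_{1-\epsilon})$ with $S^2$ via $f(s) = \delta s^\alpha$, the preliminaries formulas apply directly.

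Applying the standard warped-product identities, together with $h_3(s)=(1-\epsilon)s$ being affine (so $h_3''=0$, $\Ric_h(\partial_s,\partial_s)=0$, and $\Ric_h(e_i,e_i) = 2\epsilon(2-\epsilon)/((1-\epsilon)^2 s^2)$), and the elementary identities $\operatorname{Hess}_h(f)(\partial_s,\partial_s)=f''=\alpha(\alpha-1)\delta s^{\alpha-2}$, $\operatorname{Hess}_h(f)(e_i,e_i)=(f'/s)=\alpha\delta s^{\alpha-2}$, $|\nabla f|^2 = \alpha^2\delta^2 s^{2\alpha-2}$, $\Delta_h f = \alpha(\alpha+2)\delta s^{\alpha-2}$, yields, in an orthonormal frame with $e_i$ tangent to $S^3$:
\begin{align*}
\Ric(\partial_r,\partial_r) &= \frac{2\alpha(1-\alpha)}{s^2}, \\
\Ric(e_i,e_i) &= \frac{2}{s^2}\left(\frac{\epsilon(2-\epsilon)}{(1-\epsilon)^2} - \alpha\right), \\
f^{-2}\Ric(\partial_\alpha,\partial_\alpha) &= \frac{1}{\delta^2 s^{2\alpha}} - \frac{2\alpha(\alpha+1)}{s^2}.
\end{align*}

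Positivity now proceeds one component at a time. The radial $\Ric(\partial_r,\partial_r)$ is automatically positive since $\alpha < \alpha_2 \leq 1/2$. For the $S^3$ component, the constraint $\alpha < \epsilon(2-\epsilon)/(1-\epsilon)^2$, combined with the built-in bound $\alpha < \alpha_2$, forces $\alpha_2 \leq \alpha_2(\epsilon)$. For the $S^2$ component, positivity is equivalent to $s^{2-2\alpha}/\delta^2 > 2\alpha(\alpha+1)$; since $\alpha < 1$, the left-hand side is monotone increasing in $s$, so it suffices to verify the inequality at the gluing radius $s = s_3 := h_3(r_3)/(1-\epsilon)$. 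Plugging in the $C^1$-matching choice $\delta = \delta_2(1+r_3^2)^{\alpha_2/2}/s_3^\alpha$ simplifies the condition at $s_3$ to $s_3^2 \big/ [\delta_2^2(1+r_3^2)^{\alpha_2}] > 2\alpha(\alpha+1)$. After $\alpha_2$ (and therefore $r_3$, fixed by Step 1.3) has been chosen, this is achieved simply by taking $\delta_2 \leq \delta_2(\alpha_2)$ sufficiently small.

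I do not anticipate a serious obstacle: once the metric is in the explicit warped cone form, every Ricci component is an elementary rational function of $s,\alpha,\delta,\epsilon$, and positivity is a routine algebraic check. The only subtlety is that the values of $\alpha$ and $\delta$ are dictated by the $C^1$-matching requirements at $r = r_3$ rather than being free, so one must track their explicit dependence on $(\alpha_2,\delta_2,r_3)$ through the inequalities; doing so is precisely what produces the stated hierarchy $\alpha_2 \leq \alpha_2(\epsilon)$ followed by $\delta_2 \leq \delta_2(\alpha_2)$.
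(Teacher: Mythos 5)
Your proposal is correct and follows essentially the same route as the paper: after the shift $t = r - R_3$ you compute the three Ricci blocks of the doubly warped cone $dt^2 + (1-\epsilon)^2 t^2 g_{S^3} + \delta^2 t^{2\alpha} g_{S^2}$, and your expressions agree exactly with the paper's (e.g.\ your $S^2$ term $\tfrac{1}{\delta^2 s^{2\alpha}} - \tfrac{2\alpha(\alpha+1)}{s^2}$ is the paper's $\alpha(1-\alpha)+\tfrac{t^{2-2\alpha}}{\delta^2}-\alpha^2-3\alpha$ simplified), with positivity obtained the same way by $\alpha<\alpha_2\leq\alpha_2(\epsilon)$ and then $\delta_2\leq\delta_2(\alpha_2)$ after evaluating at the gluing radius. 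Your explicit substitution of $\delta$ in terms of $\delta_2$ just makes transparent what the paper phrases as ``$\delta_2\to 0$ as $\delta\to 0$,'' so there is no substantive difference.
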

\begin{remark}
    Notice that after the change of coordinates $t:= r-R_3$, the metric $g_4$ above becomes the desired format
\begin{equation}
    dt^2+((1-\epsilon)t)^2g_{S^3}+(\delta t^\alpha)^2g_{S^2}
\end{equation}
as in Lemma \ref{l:step1:inductive_step1:2}.
\end{remark}
\begin{proof}
The range $r>r_3$ corresponds exactly to $t>\frac{h(r_3)}{1-\epsilon}$, and in these new coordinates we can compute the Ricci curvature of $g_4$ as
\begin{align}
\Ric_{tt} &= -2\frac{\alpha(\alpha-1)}{t^2}>0\,, \notag\\
h^{-2}\Ric_{ii} &=\Big(\frac{1}{(1-\epsilon)^2}-1-\alpha\Big)\frac{2}{t^2}\,,  \notag\\
f^{-2}\Ric_{\alpha\alpha} &= \Big(\alpha(1-\alpha)+\frac{t^{2-2\alpha}}{\delta^2}-\alpha^2-3\alpha\Big)\frac{1}{t^2}\geq \Big(\alpha(1-\alpha)+\frac{(\frac{h(r_3)}{1-\epsilon})^{2-2\alpha}}{\delta^2}-\alpha^2-3\alpha\Big)\frac{1}{t^2}\, .
\end{align}
Notice that as $\alpha_2\to 0$ we have that $\alpha\to 0$, and similarly (after fixing $\alpha_2, \alpha, r_3$) we have that $\delta_2\to 0$ as $\delta\to 0$.   Thus for $\alpha_2\leq \alpha_2(\epsilon)$ the second term is uniformly positive.  Finally for $\delta_2\leq \delta_2(\alpha_2)$ we have that the third term is uniformly positive.
\end{proof}

\vspace{.3cm}

\subsection{Finishing the Proof of Lemma \ref{l:step1:inductive_step1:2}}\label{ss:step1.5}

For $\epsilon>0$ and $m=\frac{1}{10^3}$ fixed we can now choose $\alpha<\alpha(\epsilon)$ and $\delta<\delta(\epsilon, \alpha)$.
Let us now equip $E\times S^2$ with the metric $g_4$.  Recall that this metric is smooth away from $r\in\{r_1,r_3\}$, globally $C^1$ and satisfies $\Ric>0$ on the smooth part.  We can now apply the $C^1$ smoothing Lemma \ref{l:C1_warped_gluing} in order to build a smooth metric $g=g_E+f^2 g_{S^2}$ on $E\times S^2$ with $\Ric>0$ such that $g=g_4$ for $r\geq 2r_3$.  This completes the construction of $\cB=\cB(\epsilon,\alpha,\delta)$.  What remains is to define and study the projection map $\phi:(E,g_E)\to C(S^3_{1-\epsilon})$.\\

Recall that we have coordinates $(r,\omega)$ on $\dR^+\times S^3$, and we have identified $E\setminus S^2$ and $C(S^3)\setminus\{0\}$ with $\dR^+\times S^3$.  Our mapping $\phi:E\to C(S^3)$ will then take the form
\begin{align}
	\phi(r,\omega)=(\lambda(r), \omega)\, ,
\end{align}
where $\lambda:[0,\infty)\to [0,\infty)$ is a smooth function with 
\begin{equation}
    \begin{cases}
        \lambda(0)=0\,,\\
    \lambda', \lambda''    
    >0\,,\\
    \lambda(r_3)=r_3-R_3< r_3\,,\\
    \lambda'(r)=1\quad\text{for $r\geq r_3$}\, .
    \end{cases}
\end{equation}

Note that since $\lambda'\leq 1$ and $\lambda(0)=0$, we have $\lambda(r)\leq r$ for all $r>0$.  Also observe that $\phi$ sends the zero section $S^2$ of $E$ to the cone point $0$ of $C(S^3_{1-\epsilon})$.  It then suffices to estimate $|D\phi|$ in terms of the metrics on $E$ and $C(S^3_{1-\epsilon})$.  Note that as the $C^1$ gluing lemma produces a smooth metric on $E$ which is an arbitrarily small $C^1$ perturbation, it is enough to estimate in the metric $g_4$ on $E$.\\

For $r< r_1$, we have (using the notation and results in Lemma \ref{l:step1.1:main})
\begin{align}
    |D\phi(\partial_r)|&=\lambda'(r)\leq 1\,;\notag\\
    |D\phi(\frac{X}{|X|})|&=\frac{(1-\epsilon)\lambda(r)}{A(r)}\leq \frac{(1-\epsilon)r}{\frac{1}{k}\sin(kr)}\leq \frac{(1-\epsilon)kr_1}{\sin(kr_1)}\leq \frac{\pi}{2}(1-\epsilon)\,;\notag\\
    |D\phi(\frac{Y}{|Y|})|&=|D\phi(\frac{Z}{|Z|})|=\frac{(1-\epsilon)\lambda(r)}{B(r)}\leq \frac{(1-\epsilon)r_1}{B(\frac{r_1}{2})}\leq \pi (1-\epsilon)\,.
\end{align}
For $r_1<r<r_3$, we have (using the notation and results in Lemma \ref{l:step1.3:main})
\begin{align}
    |D\phi(\partial_r)|&=\lambda'(r)\leq 1\,;\notag\\
    |D\phi(\frac{\partial \omega}{|\partial \omega|})|&=\frac{(1-\epsilon)\lambda(r)}{h_3(r)}\leq \frac{(1-\epsilon)r }{A(r_1)+(r-r_1)m}\leq \frac{1-\epsilon}{m}
\end{align}
Since $m=10^{-3}$ is chosen universally, the bounds above do not depend on any other parameters.\\

For $r>r_3$, $D\phi$ is an isometry.  This concludes the proof of Lemma \ref{l:step1:inductive_step1:2}. $\qed$
\vspace{.5cm}

\vspace{.5cm}

\section{Step 2:  Adding Conical Singularities}\label{s:step2}

We complete Step 2 of the construction in this Section.  Namely, we want to see how to take a manifold $M=X^4\times_{f} S^2$ and add a cone point in any arbitrarily small neighborhood of $X^4$ while (almost) preserving a Ricci curvature lower bound.  Our primary setup, essentially after rescaling on the regularity scale of $M$, is to assume we are faced with a warped product space $(B_2(p)\times S^2,g)$ with metric $g=g_B+f^2 g_{S^2}$, under the assumptions
\begin{align}\label{e:step2:regularity_scale}
\Ric_g >\lambda g\, ,\inj_{g_B}(p) &>2\, , \notag\\
	|\Rm_{g_B}|_{g_B}\, ,\, |\nabla\Rm_{g_B}|_{g_B}\, ,\, |\nabla^2\Rm_{g_B}|_{g_B}&<\eta < 1 \, ,\notag\\
	|\nabla_{g_B}\ln f|_{g_B}\, ,\, |\nabla^2_{g_B}\ln f|_{g_B}\, ,\, |\nabla^3_{g_B} \ln f|_{g_B} &< \eta \, .
\end{align}

Note that there are no assumptions about the sign of $\lambda \in \dR$. Observe that the above hold for any warped product $M^4\times_f S^2$ so long as we work on the regularity scale.  Our main result in this Section is the following:\\

\begin{lemma}[Inductive Step 2]\label{lem:inductive_step2:2}
	Consider a warped product space $(B_2^4(p) \times S^2,g)$ with metric $g = g_B + f^2g_{S^2}$ satisfying \eqref{e:step2:regularity_scale}, and write $r := \dist_{g_B}(\cdot,p)$.
Then for all choices of parameters $0 < \eps < \eps(|\lambda|)$, $0<\alpha < \alpha(\eps)$, $0 < \hat{r} < \hat r(\alpha, \lambda,\eps)$, and $0 < \hat{\delta} < \hat{\delta}(\lambda,\alpha,\epsilon,||f||_{L^\infty},\hat{r})$, there exists $0 < \delta = \delta(\,\hat{\delta}\|f\|_\infty\mid \alpha, \lambda, \eps)$ and a warped product metric $\hat g = \hat{g}_B+\hat f^{\,2} g_{S^2}$ such that:
\begin{enumerate}
	\item The Ricci lower bound $\Ric_{\hat g}>\lambda - C(4)\eps$ holds for $\hat r/2 \leq r \leq 2$\, ,
	\item $\hat g = g_B+\hat{\delta}^2 f^2 g_{S^2}$ is unchanged up to scaling the warping factor $f$ by $\hat \delta$ for $1 \leq r \leq 2$ ,
	\item $\hat g = dr^2+(1-\eps)^2 r^2 g_{S^3}+\delta^2 r^{2\alpha} g_{S^2}$ has the cone warping structure $C(S^3_{1 - \eps}) \times_{\delta r^\alpha}S^2$ for $r\leq \hat r$ ,
	\item The identity map $\Id:(B_2(p),\hat{g}_B)\to (B_2(p),g_B)$ is $(1+2\epsilon)$-bi-Lipschitz.
\end{enumerate}
\end{lemma}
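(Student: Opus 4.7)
The plan is to define the modified metric $\hat{g} = \hat{g}_B + \hat{f}^{\,2} g_{S^2}$ by gluing three pieces along nested radial regions: the outer annulus $\{1\leq r\leq 2\}$ where $\hat{g}$ equals $g_B + \hat\delta^2 f^2 g_{S^2}$ (only the warping factor is rescaled), the inner ball $\{r\leq\hat r\}$ where $\hat{g}$ is the exact warped cone $dr^2+(1-\eps)^2 r^2 g_{S^3}+\delta^2 r^{2\alpha}g_{S^2}$, and a transition annulus $\{\hat r\leq r\leq 1\}$ on which both the base metric and the warping factor are smoothly interpolated. The Ricci lower bound will then be verified on each piece using the warped-product formulas recalled in Section 3.1. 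The organizing idea is that first shrinking $f$ to $\hat\delta f$ produces an arbitrarily large positive slack in $\left.\Ric\right|_{TS^2}$ via the $\hat{f}^{-2}$ term, while by Remark \ref{rmk:mult-by-delta} this scaling leaves $\left.\Ric\right|_{TB}$ unchanged, and it is exactly that slack that will absorb the errors introduced by the transition.

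To set up the transition I would first invoke Corollary \ref{c:cone_exponential} to write $g_B = dr^2 + r^2 g_r$ in exponential coordinates at $p$, where $g_r$ is a smooth family of metrics on $S^3$ satisfying $\|g_r - g_{S^3}\|_{C^k} \leq C(k)\eta r^2$ with matching bounds on $g_r'$ and $g_r''$. Then choose intermediate radii $\hat r < r_{\text{in}} < r_{\text{out}} < 1$ and set $\hat{g}_B = dr^2 + H(r)^2 \bar{g}(r)$, where $H$ smoothly interpolates between $H(r)=r$ on $\{r\geq r_{\text{out}}\}$ and $H(r)=(1-\eps)r$ on $\{r\leq r_{\text{in}}\}$ with $H'\in[1-\eps,1]$ and $|H''|$ as small as one wishes by taking the window long in $\log r$, and where $\bar{g}(r)$ interpolates $g_r$ with $g_{S^3}$ on the same window. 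Simultaneously define $\hat{f}$ by interpolating $\ln\hat{f}$ between $\ln(\hat\delta f)$ on the outside and $\ln\delta + \alpha\ln r$ on the inside, matching values and first derivatives at $r=\hat r$; this $C^1$-matching pins down $\delta$ with the stated scaling $\delta\to 0$ as $\hat\delta\|f\|_\infty \to 0$.

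With these constructions in hand, I would verify $\Ric_{\hat g} > \lambda - C(4)\eps$ case by case. On the outer annulus Remark \ref{rmk:mult-by-delta} preserves $\left.\Ric\right|_{TB}$ and improves $\left.\Ric\right|_{TS^2}$, so the original bound carries through. In the cone region the computation reduces to the explicit one already carried out in Section 4.3: the warped-product formulas give $\Ric_{rr} = 2\alpha(1-\alpha)/r^2 > 0$, $\left.\Ric\right|_{TS^3} \approx (4\eps - 2\alpha)/r^2 > 0$ provided $\alpha < 2\eps$, and $\left.\Ric\right|_{TS^2}$ is dominated by $(\delta r^\alpha)^{-2}$, which is large. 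The delicate step is the transition annulus, where the formulas for $\hat{g}_B = dr^2 + H^2 \bar{g}$ produce error terms of order $|H''|/H$, the favorable cone contribution $|1-H'^2|/H^2 \sim \eps/r^2$, and $O(\eta)$ terms from $\bar{g}'$ and $\bar{g}''$; taking the window long in $\log r$ kills the first of these. The correction $-2\nabla^2\hat{f}/\hat{f}$ in $\left.\Ric\right|_{TB}$ is bounded uniformly on the transition (where $r\geq \hat r$) by construction of $\hat{f}$, and this bounded error is absorbed by the enormous $\hat{f}^{-2}$ slack in $\left.\Ric\right|_{TS^2}$ once $\hat\delta\|f\|_\infty$ is chosen small. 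The bi-Lipschitz conclusion is then immediate from $H(r)\in[(1-\eps)r, r]$ together with $\|\bar{g}(r) - g_r\|_{g_r} = O(\eps)$.

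The hard part is the bookkeeping of constants across the transition. Four couplings must be managed simultaneously: (a) the bending of $H$ from slope $1$ to slope $1-\eps$ demands a long window in $\log r$ and hence forces $\hat r$ small; (b) the log-convexity of $\ln\hat{f}$ must be compatible with the eventual power law, forcing $\alpha$ small compared to $\eps$; (c) the resulting $-2\nabla^2\hat{f}/\hat{f}$ error in $\left.\Ric\right|_{TB}$ then forces $\hat\delta\|f\|_\infty$ small so that the $\hat f^{-2}$ term dominates; (d) the $C^1$-matching at $r=\hat r$ finally pins down $\delta$. These are exactly the nested dependencies $\eps = \eps(|\lambda|)$, $\alpha = \alpha(\eps)$, $\hat r = \hat r(\alpha,\lambda,\eps)$, $\hat\delta = \hat\delta(\lambda,\alpha,\eps,\|f\|_\infty, \hat r)$, and $\delta = \delta(\hat\delta\|f\|_\infty \mid \alpha,\lambda,\eps)$ declared in the statement.
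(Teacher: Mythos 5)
Your overall architecture (outer region: only rescale the warping factor by $\hat\delta$; inner region: the exact warped cone; a transition annulus in which the cone angle, the sphere family $g_r$, and the warping factor are all interpolated) matches the paper's, but two of your key steps have genuine gaps. The first is a block-structure error: the ``enormous $\hat f^{-2}$ slack'' produced by shrinking the warping factor lives only in the $\left.\Ric\right|_{TS^2}$ block, while the error $-2\nabla^2\hat f/\hat f$ you want it to absorb sits in the $\left.\Ric\right|_{TB}$ block; since the Ricci tensor of a warped product is block diagonal, slack in one block cannot compensate a deficit in the other. In the base block the only positivity available is the cone-angle gain $\sim\eps/r^2$ in the $S^3$ directions, so the radial and mixed Hessian terms of $\ln\hat f$ must be controlled intrinsically: the paper deliberately engineers radial log-concavity $-f_2''/f_2\geq\alpha r^{-2}$ and proves $|\nabla^2\ln f_2|\leq C\alpha r^{-2}$ in the interpolation region, and then needs $\alpha\ll\eps$ so that the off-diagonal $ir$ terms are dominated via the quadratic-form inequality against the diagonal gains $\alpha/r^2$ and $\eps/r^2$ (cf.\ Lemma \ref{lem:glue-f-helper} and \eqref{eq:f-Ric-pos-ests}). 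Moreover $f$ is \emph{not} radial, so ``matching values and first derivatives at $r=\hat r$'' is not possible pointwise: the locus $\{\hat\delta f=\delta r^\alpha\}$ is $\omega$-dependent, and the paper's cubic interpolation of $\ln f$ together with the localization of this intersection set is exactly what your one-dimensional matching skips; the paper explicitly remarks that a naive interpolation between $f$ and $\delta r^\alpha$ is insufficient. Without these estimates your coupling (c) does not close, and a careless interpolation can produce radial convexity of size $\hat r^{-2}$ in $\Ric_{rr}$ with nothing to absorb it.

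The second gap is the placement of the swap $g_r\to g_{S^3}$ ``on the same window'' as the bending of $H$. Once you replace $g_r$ by an interpolant, the original curvature information in the $ij$ block is discarded and all you retain is $\Ric_{g_{r,3}}\geq(2-C\eta r^2)g_{r,3}$, so the $ij$ Ricci is only bounded below by roughly $4\eps/r^2-C\eta-C\alpha/r^2$. This exceeds $\lambda-C(4)\eps$ only where the angle is \emph{already fully bent} and $r$ is so small that $\eps/r^2$ dominates $|\lambda|+C\eta$ (recall $\lambda$ may be positive and $\eta$ is merely $<1$, not small). If the swap overlaps the part of the window where $H'\approx 1$, or happens at radii comparable to $r_{\mathrm{out}}$, the bound fails. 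This is why the paper sequences the construction: bend the angle on $[\tfrac12,1]$, where it is an $O(\eps)$ perturbation of $g_B$ in $C^2$ and the original bound $\Ric_g>\lambda$ carries the estimate; interpolate the warping factor at a smaller radius $r_2$; and only at a much smaller radius $r_3\leq r_3(\alpha,\lambda,\eps)$ replace $g_r$ by $g_{S^3}$, using the by-then-available gains to absorb the $O(\eta)$ and $O(\alpha/r^2)$ errors. A minor further point: in the exact cone region $\left.\Ric\right|_{TS^2}$ is \emph{not} dominated by $(\delta r^\alpha)^{-2}$ as $r\to0$ (it tends to $-\infty$), which is precisely why the lemma asserts the Ricci bound only for $r\geq\hat r/2$; your verification there should be restricted accordingly, with $\delta\leq\delta(\hat r,|\lambda|)$.
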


\begin{remark}
  Most of the results of this section hold in general dimensions, where the constants should then include a dimensional dependence.  Our notation $C(4)$ denotes that the constant only depends on the dimension $n=4$.
  
\end{remark}
\begin{remark}
	Recall that our notation of the constant dependence $\delta = \delta(\,\hat{\delta}\|f\|_\infty\mid \alpha, \lambda, \eps)$ means that $\delta\to 0$ as $\hat{\delta}\|f\|_\infty\to 0$ with the other constants $\alpha,\lambda,\epsilon$ fixed .
\end{remark}

The construction will be broken down into three steps. In Step 2.1 of Section \ref{ss:step2:1} we begin by writing the metric $g_B=dr^2 +r^2 g_r$ in exponential polar coordinates, where $g_r$ is a smooth family of metrics on $S^3$ which naturally converges to the standard metric as $r\to 0$.  Our primary goal in Step 2.1 is to alter the base metric $g_B$ to a metric $g_{B,1}$.  The metric $g_{B,1}$ will agree with $g_B$ for $1\leq r\leq 2$, however it will take the form  $g_{B,1}=dr^2 +(1-\epsilon)^2r^2 g_r$ for $r\leq r_1$.  This will give $g_{B,1}$ a large amount of additional positive Ricci curvature in the nonradial directions, which we will exploit in future steps.  Additionally, we are able to ensure that $\Ric$ of the total space drops by at worst an $\eps$-small amount when passing from $g$ to $g_1$. \\

In Step 2.2 of Section \ref{ss:step2:2} we focus on the $S^2$ warping factor and leave the base $g_{B,2}:= g_{B,1}$ fixed.  Our goal will be to construct a warping factor $f_2$ so that $f_2= \hat{\delta} f_1$ for $r\geq r_1$, while $f_2 = \delta\, r^\alpha$ for $r\leq r_2$.  The effect of this will be to add a large amount of Ricci curvature to the radial direction of $g_{2}:= g_{B,2}+f_2^2 g_{S^2}$.\\

In the final Step 2.3 of Section \ref{ss:step2:3} we will use the additional positive Ricci curvature introduced in the first two steps to once again alter the base metric $g_{B,3}$, while fixing $f_3 := f_2$.  We will preserve $g_{B,3}=g_{B,2}$ for $r\geq r_2$, however for $r\leq r_3$ we will ensure that $g_{B,3} = dr^2 + (1-\epsilon)^2r^2 g_{S^3}$ is the standard cone $C(S^3_{1-\epsilon})$.  This will complete the construction of $\hat{g}$, and in Section \ref{ss:step2:4} we will check the final bi-Lipschitz property of the construction.

\vspace{.3cm}

\subsection{Step 2.1: Decreasing the Cone Angle}\label{ss:step2:1}

Consider the metric $g_B$ on $B_2(p)$, and by using the radial function $r := d(\cdot, p)$ and exponential coordinates let us write $g_B$ as 
\begin{align}
	g_B = dr^2 +r^2 g_{r}\, ,
\end{align}
where $g_r$ is a smooth family of metrics on $S^3$.  It follows from Corollary \ref{c:cone_exponential}, recalling that $\eta<1$, that we have the estimates
\begin{align}\label{e:step2:1:metric_decay_estimates}
	\big(2 - C(4)\eta\, r^2\big)g_r &\leq \Ric_{g_r}\, ,\notag\\
	\|g'_r\|_{L^\infty(S^3, g_r)}&\leq C(4)\eta\, r \, .
\end{align}

Let us remark that the first line of \eqref{e:step2:1:metric_decay_estimates} follows either from the $C^2$ estimate Corollary \ref{c:cone_exponential} or directly from the second line of \eqref{e:step2:1:metric_decay_estimates} by applying the Gauss equations and the identity $\frac{1}{2}(r^2g_r)' = II^{\partial B_r(p)}$.\\

 In this subsection we will look for a metric 
\begin{align}
	g_1 := g_{B,1}+ f_1^2 g_{S^2}\, ,
\end{align}
under the ansatz 
\begin{align}\label{e:step2:1:metric_ansatz}
	g_{B,1} &= dr^2 +h(r)^2 g_r \, ,\notag\\
	f_1 & := f\, .
\end{align}

Observe that $g_r$ is the original family of metrics on $S^3$ and that $f_1$ is a function on $B_2(p)$.  For $r_1 := 1/2 $ , the function $h(r)$ will be chosen as any smooth function with the properties that 
\begin{align}\label{e:step2:1:metric_ansatz:2}
	\begin{cases}
		h(r) = r &\text{ if } r\geq 1\, ,\\
		|h - r| + |h'-1|+|h''|\leq 10\epsilon &\text{ if } r_1\leq r\leq 1\,,\\
		h(r) = (1-\epsilon)r &\text{ if } r\leq r_1\, .
	\end{cases}
\end{align}

In particular, this construction implies that $g_{B,1} = g_B$ for $1 \leq r \leq 2$, while $g_{B,1} = dr^2 + (1 - \eps)^2g_r$ for $r \leq r_1$. Note that on $B_{r_1}(p)$ we have introduced a cone singularity at $p$, and we will see that this introduces a scale invariant (positive) blow up of the Ricci curvature near $p$.  Our main result in this subsection is that for $\epsilon$ sufficiently small, the metric $g_1$ has Ricci curvature that drops by an arbitrarily small amount from that of $g$:\\

\begin{lemma}\label{l:step2:1:main}
Let $g$ satisfy the assumptions of Lemma \ref{lem:inductive_step2:2} with $g_{B,1}$ defined as in \eqref{e:step2:1:metric_ansatz} and \eqref{e:step2:1:metric_ansatz:2}.  Then for any $0 < \epsilon<\epsilon(|\lambda|)$, we have that
\begin{enumerate}
\item $\left.\Ric_{g_1}\right|_{TB_2(p)}>\lambda - C(4)\,\eps$ on $\big(B_2(p)\setminus \{p\}\big)\times S^2$ ,
\item $\left.\Ric_{g_1}\right|_{TS^2} > f^{-2} - C(4)\eta\, r^{-1}$ on $\big(B_2(p)\setminus \{p\}\big)\times S^2$ .
\end{enumerate}
\end{lemma}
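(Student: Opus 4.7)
The plan is to work in polar coordinates $(r,\omega)$ centered at $p$, using Corollary \ref{c:cone_exponential} to write $g_B = dr^2 + r^2 g_r$ with the angular family $g_r$ satisfying $\|g_r - g_{S^3}\|_{C^2(S^3)} \leq C(4)\eta\, r^2$. Since $g_{B,1} = dr^2 + h(r)^2 g_r$ shares the same family $\{g_r\}$ and differs only in the radial profile, the shape operators of the level sets of $r$ decompose as $S^B = \tfrac{1}{r}I + \tfrac{1}{2}T$ and $S^1 = \tfrac{h'}{h}I + \tfrac{1}{2}T$, where $T := g_r^{-1}g'_r$ is the common ``shear'' part. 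I would then apply the warping formulas from the preliminaries to reduce both conclusions of the lemma to comparing $\Ric_{g_{B,1}}$ with $\Ric_{g_B}$ and the $g_{B,1}$-derivatives of $f$ with the $g_B$-derivatives.

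The key observation to exploit is that in the pure cone region $r \leq r_1$, we have $h'/h = 1/r$ exactly, so $S^B = S^1$ as endomorphisms. Feeding this into the Riccati and Gauss equations for the polar Ricci tensor shows that $\Ric_{g_{B,1}}(\partial_r,\partial_r) = \Ric_{g_B}(\partial_r,\partial_r)$, while in the tangential directions the only change comes from the intrinsic Ricci of the level set, whose induced metric has rescaled by $h^2/r^2 = (1-\eps)^2$. Applying the lower bound $\Ric_{g_r} \geq (2 - C\eta r^2)g_r$ then produces an additional positive Ricci curvature of order $(2 - C\eta r^2)(h^{-2} - r^{-2}) \gtrsim \eps/r^2$ in the tangential direction, which strongly outweighs any losses. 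In the transition region $r_1 \leq r \leq 1$, the bounds $|h - r| + |h' - 1| + |h''| \leq 10\eps$ propagate to all polar Ricci terms and give only an $O(\eps)$ loss; since $r \geq r_1 = 1/2$ here, no $1/r^2$-type blow-up occurs.

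For the $f$-derivatives, the $(1+C\eps)$-bi-Lipschitz equivalence of $g_{B,1}$ and $g_B$ (immediate from $h/r \in [1-\eps, 1]$) gives $|\nabla_{g_{B,1}} \ln f|^2 \leq (1+C\eps)|\nabla_{g_B} \ln f|^2 \leq C\eta^2$. A direct polar computation yields
$$\Delta_{g_{B,1}} f - \Delta_{g_B} f = 3\Bigl(\tfrac{h'}{h} - \tfrac{1}{r}\Bigr)\partial_r f + \Bigl(\tfrac{1}{h^2} - \tfrac{1}{r^2}\Bigr)\Delta^{g_r} f,$$
whose first term vanishes in the pure cone region. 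The second term is controlled by solving for $\tfrac{1}{r^2}\Delta^{g_r} f$ from the polar expression of $\Delta_{g_B} f$ and then using the standing bounds $|\nabla_{g_B}^k \ln f| \leq \eta$; the dominant estimate is $|(3/r)\partial_r f| \leq 3\eta f/r$, yielding $|f^{-1}\Delta_{g_{B,1}} f| \leq C\eta/r$. Plugging into the warping formula for $\Ric_{g_1}|_{TS^2}$ establishes conclusion $(2)$, and conclusion $(1)$ follows by combining the Ricci comparisons above with the hypothesis $\Ric_g > \lambda g$ and the warping formula for $\Ric_{g_1}|_{TB_2(p)}$.

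The main obstacle will be bookkeeping in the polar Ricci formulas: the off-diagonal components $\Ric(\partial_r, X)$ for $X$ tangent to the level sets of $r$ involve divergences of $T$, which one must verify cancel between the two metrics (since $T$ is common) so that only the ``expansion'' discrepancy $\tfrac{h'}{h} - \tfrac{1}{r}$ remains and can be absorbed into the $O(\eps)$ loss. The transition region, where the positive curvature gained from the cone deformation is weakest, is what ultimately forces the smallness threshold $\eps < \eps(|\lambda|)$ in the hypothesis.
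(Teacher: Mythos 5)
Your proposal is correct and follows essentially the same route as the paper: both work in exponential polar coordinates $g_B = dr^2 + r^2 g_r$ with the estimates of Corollary \ref{c:cone_exponential}, split into the pure cone region $r \leq r_1$ (where the radial profile change is an exact rescaling and the gain $\approx (2 - C\eta r^2)\bigl(h^{-2} - r^{-2}\bigr) \gtrsim \eps/r^2$ comes from $\Ric_{g_r} \geq (2 - C\eta r^2)g_r$) and the transition region $r_1 \leq r \leq 1$ (where \eqref{e:step2:1:metric_ansatz:2} gives only $O(\eps)$ losses), and control the $f$-terms by the standing $\eta$-bounds; your shape-operator/Riccati--Gauss--Codazzi phrasing is just a repackaging of the paper's componentwise formulas \eqref{e:step2:1:Ricci:1}. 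The only point to make explicit when writing it up is that for conclusion $(1)$ the warping corrections $-2(\nabla^2_{g_{B,1}} f)_{ij}/f$ and $-2(\nabla^2_{g_{B,1}} f)_{ir}/f$ must be compared via the changed Christoffel symbol $\Gamma^r_{ij}$ (not just the Laplacian as in your sketch for $(2)$), which yields the same harmless $O(\eps\eta/r)$ error the paper records.
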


\begin{remark}
The reader may wonder at the disagreeable $\left.\Ric_{g_1}\right|_{TS^2}$ estimate. This is simply an artifact of the division of the proof of Lemma \ref{lem:inductive_step2:2} into distinct steps. As soon as the radius $r_2$ is chosen in Step \ref{ss:step2:2}, we will multiply $f$ by the small but positive number $0 < \hat{\delta} = \hat{\delta}(r_2,\ldots)$ so that the first term of the above $\left.\Ric_{g_1}\right|_{TS^2}$ lower  bound dominates the second in the range $r_2 \leq r \leq 2$.
\end{remark}

\begin{proof}

Our standard notation for the rest of this Section will be to use $i,j,\ldots$ to represent coordinate directions on $S^3$ and $\alpha,\beta,\ldots$ to represent coordinate directions on $S^2$.  The index $r$ is reserved for the radial direction, and we will sometimes write $\partial_r h = h'$ to represent radial derivatives.  Let us compute the Ricci curvature of the ansatz \eqref{e:step2:1:metric_ansatz} as follows:

\begin{align}\label{e:step2:1:Ricci:1}
(\Ric_{g_1})_{rr} &= (\Ric_g)_{rr} - 3\frac{h''}{h} +\left(\frac{1}{r} -\frac{h'}{h}\right)\tr_{g_r}(g_r')\, , \notag\\
(\Ric_{g_1})_{ir} &= (\Ric_g)_{ir} + \frac{2}{h}\left(\frac{h'}{h} - \frac{1}{r}\right)\frac{h\partial_i f_1}{f_1}\, , \notag\\
(\Ric_{g_1})_{ij} &= (\Ric_g)_{ij} + \frac{h^2 - r^2}{r^2}(\Ric_g)_{ij} + \left(\frac{1}{h^2} - \frac{1}{r^2}\right)(h^2\Ric_{g_r})_{ij} - \frac{h''}{h} (g_1)_{ij} + 2\left(\frac{1}{r^2} - \frac{(h')^2}{h^2}\right)(g_1)_{ij}  \notag \\
&+ \left(\frac{1}{r} - \frac{h'}{h}\right)\left(\frac{1}{2}\tr_{g_r}(g_r')(g_1)_{ij} + \frac{3}{2}(h^2g_r')_{ij} + 2\frac{f'_1}{f_1}(g_1)_{ij}\right) + 2\frac{h^2 - r^2}{r^2}\left(\frac{(\nabla^2_{g_B} f_1)_{ij}}{f_1} -\frac{1}{2}\frac{f_1'}{f_1}(g_r')_{ij}\right)\, , \notag\\
(\Ric_{g_1})_{\alpha\beta} &= \left[\frac{1}{f^2_1} -\frac{r^2}{h^2}\left(\frac{\Delta_{g_B} f_1}{f_1} + \frac{|\nabla^{g_B} f_1|^2_{g_B}}{f^2_1}\right) + \frac{r^2 - h^2}{h^2}\left( \frac{f''_1}{f_1} + \frac{(f'_1)^2}{f^2_1} +\frac{1}{2}\frac{f'_1}{f_1}\tr_{g_r}(g_r') \right) \right.\notag\\
&\left.+ 3\left(\frac{r^2 - h^2}{rh^2} + \frac{1}{r} - \frac{h'}{h}\right)\frac{f'_1}{f_1} \right](g_1)_{\alpha\beta}\, .
\end{align}

As in \eqref{e:step2:1:metric_decay_estimates} we have that

\begin{equation}
\big(2 - C\eta\, r^2\big)g_r \leq \Ric_{g_r}\, ,\qquad\|g'_r\|_{L^\infty(S^3, g_r)}\leq C\eta\, r \, .
\end{equation}

Additionally, we recall the assumptions

\begin{equation}
|\nabla_{g_B} \ln f|_{g_B} < \eta\, ,\qquad \left|\frac{(\nabla^2_{g_B} f)}{f}\right|_{g_B} \leq |\nabla^2_{g_B} \ln f|_{g_B} + |\nabla_{g_B} \ln f|^2_{g_B} < C\eta\, .
\end{equation}

Let us focus first on the region $r\in [r_1,1]$, which is the worst of the two regions.  If we plug the above estimates and \eqref{e:step2:1:metric_ansatz:2} into \eqref{e:step2:1:Ricci:1}, we arrive at
\begin{align}\label{e:step2:1:Ricci:2}
(\Ric_{g_1})_{rr} &> \lambda   - C(1+ C\eta) \epsilon\, , \notag\\
\big|(\Ric_{g_1} - \Ric_g)_{ir}\big|_{(g_1)_{ij}} &<  C\eta\epsilon \, , \notag\\
(\Ric_{g_1})_{ij} &>  \Big(\lambda  -C(1+C\eta)\epsilon\Big)(g_1)_{ij}\, ,\notag\\
(\Ric_{g_1})_{\alpha\beta} &> \left(\frac{1}{f^2} - C\eta\right)(g_1)_{\alpha\beta}\, . 
\end{align}

We see from the first three inequalities that if $\epsilon<\epsilon(|\lambda|)$, then $\left.\Ric_{g_1}\right|_{TB_2(p)}>\lambda - C(4)\eps$.  If we focus now on $r\leq r_1$, where $h(r) = (1 - \eps)r$, then most of the error terms of \eqref{e:step2:1:Ricci:1} vanish. What remains are the estimates 
\begin{align}\label{e:step2:1:Ricci:3}
(\Ric_{g_1})_{rr} &> \lambda  \, , \notag\\
(\Ric_{g_1})_{ri} &= (\Ric_g)_{ri}\, , \notag\\
(\Ric_{g_1})_{ij} &> \left(\lambda +\frac{(2-C\eta r^2)\epsilon}{r^2} - C\frac{\eta\eps}{r}-C\eta\eps\right)(g_1)_{ij}\, ,\notag\\
(\Ric_{g_1})_{\alpha\beta} &>  \left(\frac{1}{f^2} - C\frac{\eta\eps}{r} - C\eta\right)(g_1)_{\alpha\beta}\, .
\end{align}

We again see that for $\epsilon<\epsilon(|\lambda|)$, the Ricci curvature satisfies $\Ric_{g_1}\geq \lambda - C\eps$ in the $TB_2(p)$ directions. We also see that $\left.\Ric_{g_1}\right|_{TS^2} > f^{-2} - C\eta r^{-1}$ holds for both the $r_1 \leq r  \leq 1$ region and the $r \leq r_1$ region.
\end{proof}

\vspace{.3cm}

\subsection{Step 2.2: The $S^2$ Warping Factor}\label{ss:step2:2}

We now want to alter the metric $g_1$ in the range $r_2\leq r\leq r_1=\frac{1}{2}$.  We are looking for a metric $g_2$ of the form

\begin{align}\label{e:step2.2:metric_ansatz}
	g_2 := g_{B,2}+ f_2(x)^2 g_{S^2} := g_{B,1}+ f_2(x)^2 g_{S^2}\, .
\end{align}
In particular, we will not alter the base metric $g_{B,1}$ in this step.  Our warping function $f_2:B_2\to \dR^+$ is not a radial function everywhere, though one of our goals will be to make it radial on small radii.  We will want $f_2$ to satisfy the properties

\begin{align}
	\begin{cases}
		f_2=\hat\delta f_1&\text{ if }r\geq r_1\\
		f_2=\delta r^\alpha&\text{ if }r\leq r_2
	\end{cases}\, .
\end{align}

Due to the nonradial nature of $f_1$, there is some subtlety which makes a naive interpolation between $f_1$ and $\delta r^\alpha$ insufficient.  Morally, this is due to the uncontrolled positivity of $\Ric$, which can contribute very negative terms if altered in a careless manner.  The following will be the main constructive lemma for $f_2$ in this subsection: \\

\begin{lemma}\label{lem:glue-f-helper}
	Let $(B_2(p),g_B)$ and $f_1$ be as in Lemma \ref{l:step2:1:main}.  Then for each $0<\alpha<1$, $0 < \hat{\delta}< 1$, and $0 < r_2 < r_2(\alpha)$, there exists $f_2:B_2(p)\to \dR^+$ with $r_2<C(4)r_2 = r_2^+<r_1$ and $\delta=\delta(\,\hat{\delta}\|f\|_\infty \mid r_2)$ such that
\begin{enumerate}
\item $f_2 = \hat{\delta} f_1$ on the region $r_2^+\leq r\leq 2$ , \label{eq:s2b-1}
\item $f_2 = \delta r^\alpha$ on the region $r\leq r_2$ , \label{eq:s2b-2}
\item $-\frac{f_2''}{f_2} \geq \alpha r^{-2}$ on the region $r_2\leq r\leq r^+_2$ , \label{eq:s2b-3} 
\item $\alpha^{-1}|\nabla_{g_B} \ln f_2|_{g_B}^2, |\nabla^2_{g_B} \ln f_2|_{g_B} < C(4)\,\alpha\, r^{-2} $ on the region $r_2\leq r\leq r^+_2$, \label{eq:s2b-4}
\item $\|f_2\|_\infty \leq C(4)(\hat{\delta}\|f\|_\infty)^{1/2}$ on the region $r_2\leq r\leq r^+_2$, \label{eq:s2b-5} 
\item $f_2$ is smooth away from $r\in \{0, r_{2},r^+_2\}$, and $C^1$ everywhere except $\{p\}$ . \label{eq:s2b-6}
\end{enumerate}
\end{lemma}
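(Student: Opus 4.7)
The plan is to reduce the problem to a purely radial construction and then carry that out with an explicit ODE ansatz. Set $c := f(p)$, pick $r_2^+ := C_0\, r_2$ for a dimensional constant $C_0 = C_0(4)$ to be fixed, and let $\chi:[0,\infty)\to[0,1]$ be a smooth cutoff with $\chi \equiv 0$ on $[0,r_2 + \tfrac{1}{4}(r_2^+-r_2)]$ and $\chi\equiv 1$ on $[r_2^+,\infty)$. Since $|\nabla_{g_B}\ln f|_{g_B}<\eta<1$, we have the near-constant estimate $|\ln f(x) - \ln c|\leq \eta\, r$ on $B_2(p)$, so we aim for
\begin{equation*}
f_2(x):=\tilde f_2(r(x))\cdot\exp\Bigl(\chi(r(x))\,\bigl(\ln f(x) - \ln c\bigr)\Bigr),
\end{equation*}
where $\tilde f_2$ is a radial profile to be constructed. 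By design, $f_2 = \hat\delta f$ at $r=r_2^+$ (property (1)), while $f_2=\tilde f_2 =\delta r^\alpha$ on $\{r\leq r_2\}$ (property (2)).

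For the radial profile, set $\delta := \hat\delta c / (r_2^+)^\alpha$, which satisfies $\delta=\delta(\hat\delta\|f\|_\infty\mid r_2)$ since $c\leq\|f\|_\infty$ and $r_2^+$ is a dimensional multiple of $r_2$. We seek $\tilde f_2$ globally $C^1$, smooth off $\{r_2,r_2^+\}$, with $\tilde f_2=\delta r^\alpha$ on $[0,r_2]$, $\tilde f_2=\hat\delta c$ on $[r_2^+,\infty)$, and $-\tilde f_2''/\tilde f_2\geq 2\alpha/r^2$ strictly on $(r_2,r_2^+)$. Writing $u:=\ln \tilde f_2$, the inequality becomes $-u''\geq 2\alpha/r^2+(u')^2$ with boundary data $u'(r_2)=\alpha/r_2$, $u'(r_2^+)=0$, and total increase $u(r_2^+) - u(r_2) = \alpha\ln C_0$. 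A standard ODE ansatz, for instance taking $u'$ to be a smooth monotone interpolation between $\alpha/r_2$ and $0$ whose primitive realizes the required total increase, produces such a $\tilde f_2$; the companion bound $|(\ln \tilde f_2)''|\leq C(4)\alpha/r^2$ falls out of the same construction, and $C_0$ is chosen dimensional so that $C^1$ matching and the strict concavity are simultaneously achievable.

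The non-radial correction $\chi(r)(\ln f - \ln c)$ has gradient of order $\eta$ (from $|\nabla\ln f|<\eta$ and from the length of the cutoff region being comparable to $r_2^+$) and Hessian of order $\eta/r_2$, both dominated by the radial quantities $\alpha/r$ and $\alpha/r^2$ on $r\in[r_2,r_2^+]$ once $r_2 < r_2(\alpha)$ is taken small enough that $\eta/r_2\ll \alpha/r_2^2$. This gives properties (3) and (4). Property (5) is straightforward: $\tilde f_2\leq \hat\delta c\leq\hat\delta\|f\|_\infty$ on $[r_2,r_2^+]$ by monotonicity of the radial profile, the non-radial factor is bounded by $e^{\eta r_2^+}\leq 2$, so $\|f_2\|_\infty\leq C\,\hat\delta\|f\|_\infty\leq C(\hat\delta\|f\|_\infty)^{1/2}$ once $\hat\delta\|f\|_\infty\leq 1$. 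Property (6) is automatic from the construction.

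The main obstacle is the radial ODE in the second step. The inequality $-\tilde f_2''/\tilde f_2\geq \alpha/r^2$ is not even satisfied by $\tilde f_2 = \delta r^\alpha$ itself, which only gives $\alpha(1-\alpha)/r^2$, so the interpolation must provide genuinely more concavity than the $r^\alpha$ regime alone. In particular, $C^1$-matching at $r=r_2$ forces a jump in $\tilde f_2''$ there, which is compatible with (6); and the ansatz must simultaneously meet the endpoint derivative conditions, the strict concavity bound from below, and the log-Hessian bound from above across a short transition of logarithmic length $\alpha\ln C_0$.
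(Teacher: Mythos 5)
Your reduction to a radial profile (writing $f_2=\tilde f_2(r)\exp\big(\chi(r)(\ln f-\ln f(p))\big)$ and absorbing the $O(\eta)$ and $O(\eta/r_2)$ errors of the non-radial factor into the slack of the radial concavity once $r_2<r_2(\alpha)$) is a legitimate alternative framing, and properties (1), (2), (5), (6) do come out of it. The genuine gap is the radial boundary-value problem itself, which you dismiss as ``a standard ODE ansatz'': with your calibration $\delta=\hat\delta f(p)/(r_2^+)^\alpha$ the data you prescribe are mutually inconsistent. Writing $u=\ln\tilde f_2$ and $C_0=r_2^+/r_2$, you require $u'(r_2)=\alpha/r_2$, $u'(r_2^+)=0$, $u(r_2^+)-u(r_2)=\alpha\ln C_0$, and $-u''\geq 2\alpha r^{-2}+(u')^2$. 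Integrating the last inequality from $r_2$ gives $u'(r)\leq \alpha/r_2-2\alpha\big(\tfrac{1}{r_2}-\tfrac1r\big)=\tfrac{2\alpha}{r}-\tfrac{\alpha}{r_2}$ (which already forces $r_2^+\leq 2r_2$, so $C_0$ cannot be chosen freely), and hence
\begin{equation*}
u(r_2^+)-u(r_2)\;\leq\;\int_{r_2}^{r_2^+}\Big(\frac{2\alpha}{r}-\frac{\alpha}{r_2}\Big)\,dr\;=\;2\alpha\ln C_0-\alpha(C_0-1)\;<\;\alpha\ln C_0\,,
\end{equation*}
since $\ln C_0<C_0-1$ for $C_0>1$. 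So the prescribed total increase is unattainable. Even with the bare requirement $-u''\geq \alpha r^{-2}+(u')^2$ (leaving no room for the non-radial errors) one gets $u'\leq\alpha/r$, so the increase is at most $\alpha\ln C_0$ with equality forcing $u'\equiv\alpha/r$, contradicting $u'(r_2^+)=0$. Thus no $\tilde f_2$ with your $\delta$ exists; the tension you yourself flag in the last paragraph is real and unresolved.

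The framework is salvageable but needs a recalibration: the feasible values of $u(r_2^+)-u(r_2)$ lie in a nonempty subinterval strictly below $\alpha\ln C_0$ (for a fixed $C_0<2$), so $\delta$ must be taken of the form $\hat\delta f(p)\,r_2^{-\alpha}e^{-t}$ with $t$ in that interval rather than $t=\alpha\ln C_0$; this still has the required dependence $\delta(\hat\delta\|f\|_\infty\mid r_2)$. The paper sidesteps the over-determined radial problem entirely: it pins $\delta$ by locating the intersection set $\{\hat\delta f_1=\delta r^\alpha\}$ near the midpoint $r_m$ of the transition annulus, then defines $\ln f_2$, for each fixed direction $\omega$, as the $C^1$ cubic interpolation in $r$ between $\ln(\delta r^\alpha)$ and $\ln(\hat\delta f_1)$; values and slopes match automatically at both ends, and the concavity $(\ln f_2)''\leq -2\alpha/r^2$ is verified a posteriori from the slope drop of size $\approx\alpha/r_m$ spread over the short interval, with the zeroth-order mismatch controlled by the location of the intersection point. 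A minor additional point: your bound $\|f_2\|_\infty\leq C\hat\delta\|f\|_\infty$ gives (5) only after assuming $\hat\delta\|f\|_\infty\leq 1$, which is harmless in the application but is an extra hypothesis not in the statement.
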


\begin{remark}
	The $C^1$ nature of $f_2$ is due to $(3)$, where we force a definite amount of radial concavity throughout the interpolation region.  This will later be smoothed with a $C^1$ gluing lemma.
\end{remark}

We will wait until the end of the subsection to prove the above, which will require a bit of work.  Let us begin instead with how to use it in order to build our desired $g_2$.\\

\begin{lemma}\label{lem:glue-f}
	Let $(B_2(p)\times S^2,g_1)$ be as in Lemma \ref{l:step2:1:main}, and for $0<\alpha<\alpha(\epsilon)$, $r_2 = r_2(\lambda,\alpha,\eps)$, and $0<\hat{\delta}<\hat{\delta}(\lambda,\alpha,\eps, \|f\|_\infty, \hat{r})$, let $f_2$ be as in Lemma \ref{lem:glue-f-helper}. Then $g_2:= g_{B,1}+f_2^2 g_{S^2}$ satisfies $\Ric_{g_2}>\lambda - C(4)\eps$ away from $r \in [0,\hat{r}/2) \cup \{r_2, r_2^+\}$.
\end{lemma}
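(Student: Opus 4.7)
The plan is to split $\{\hat r/2 \leq r \leq 2\} \setminus \{r_2, r_2^+\}$ into three subregions matching the piecewise description of $f_2$ in Lemma~\ref{lem:glue-f-helper}: the outer region $r_2^+ < r \leq 2$ where $f_2 = \hat\delta f_1$, the inner region $\hat r/2 \leq r < r_2$ where $f_2 = \delta r^\alpha$ is radial, and the transition region $r_2 < r < r_2^+$. In each subregion I will apply the warped product Ricci formulas from Section~3.1 together with the identity $\Ric_{g_{B,1}} = \left.\Ric_{g_1}\right|_{TB} + 2\nabla^2 f_1/f_1$, which reduces the calculation to the Step~2.1 bounds of Lemma~\ref{l:step2:1:main} plus the correction coming from replacing $f_1$ by $f_2$.

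In the outer region, since $f_2 = \hat\delta f_1$ is a constant rescaling of the warping factor, Remark~\ref{rmk:mult-by-delta} immediately gives $\left.\Ric_{g_2}\right|_{TB} = \left.\Ric_{g_1}\right|_{TB} > \lambda - C\eps$. The $TS^2$ scalar gains a factor of $\hat\delta^{-2}$ in the $f_1^{-2}$ term, so for $\hat\delta$ small (depending on $\lambda, \eps, r_2, \|f\|_\infty$) this dominates the remaining uniformly bounded contributions coming from the Step~2.1 estimate $\left.\Ric_{g_1}\right|_{TS^2} > f_1^{-2} - C\eta/r$.

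In the inner region, $g_{B,1}$ is the exact cone $dr^2 + (1-\eps)^2 r^2 g_r$ and $f_2 = \delta r^\alpha$ is radial. A direct Hessian computation in cone coordinates will yield $(\nabla^2 f_2/f_2)_{rr} = \alpha(\alpha-1)/r^2$ and $(\nabla^2 f_2/f_2)_{ij}/(g_{B,1})_{ij} = \alpha/r^2 + O(\eta)$, with cross entries $O(\eta\alpha/r)$. Combining with the Step~2.1 tangential estimate $(\Ric_{g_1})_{ij} > (\lambda + 2\eps/r^2 - C\eta)(g_1)_{ij}$, the $TB$ Ricci picks up \emph{positive} $/r^2$ contributions ($+2\alpha(1-\alpha)/r^2$ radially and $+2(\eps - \alpha)/r^2$ tangentially) that dominate the $O(\eta)$ errors as soon as $\alpha < \alpha(\eps)$. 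For the $TS^2$ scalar, a parallel computation gives $\delta^{-2} r^{-2\alpha} - O(\alpha/r^2) - O(\eta\alpha)$, and for $r \geq \hat r/2$ and $\hat\delta\|f\|_\infty$ small (which forces $\delta$ small via the dependence $\delta = \delta(\hat\delta\|f\|_\infty \mid \ldots)$ of the helper lemma), the first term dominates so the scalar exceeds $\lambda - C\eps$.

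The transition region $r_2 < r < r_2^+$ is the main obstacle, since $f_2$ there is neither radial nor a constant rescaling of $f_1$, and all estimates must come from the three technical properties (3)--(5) of Lemma~\ref{lem:glue-f-helper}. Property (3), $-f_2''/f_2 \geq \alpha/r^2$, contributes a radial boost of at least $2\alpha/r^2$ to $-\nabla^2 f_2/f_2$ (using that $\partial_r$ is still a geodesic so $(\nabla^2 f_2)_{rr} = f_2''$); property (4) controls the remaining entries via $|\nabla^2 f_2/f_2| \leq |\nabla^2\ln f_2| + |\nabla\ln f_2|^2 \leq C\alpha/r^2$; and property (5), $\|f_2\|_\infty \leq C(\hat\delta\|f\|_\infty)^{1/2}$, makes $f_2^{-2}$ very large. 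Combining (3) and (4) with the base cone bound $\Ric_{g_{B,1}} \geq (\lambda - C\eta)g$ (and the $+2\eps/r^2$ tangential positivity carried over from the cone computation in Step~2.1) yields $\left.\Ric_{g_2}\right|_{TB} > (\lambda - C\eps)g_2$ provided $\alpha < \alpha(\eps)$; cross terms in $(\Ric_{g_2})_{ri}$, bounded by $C\eta + C\alpha/r^2$, are absorbed into the diagonal bounds via Cauchy--Schwarz. Property (5) then gives $\left.\Ric_{g_2}\right|_{TS^2}$-scalar $\geq f_2^{-2} - C\alpha/r_2^2 > \lambda - C\eps$ for $\hat\delta$ small enough in terms of $\lambda,\alpha,\eps,\|f\|_\infty,\hat r$. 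The delicate part is coordinating $\alpha < \alpha(\eps)$ (for the $TB$ bound) with $\hat\delta\|f\|_\infty$ small relative to $r_2$ and $\alpha$ (for the $TS^2$ bound)---precisely the dependence structure encoded in the hypotheses of the statement.
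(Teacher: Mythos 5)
Your proposal is correct and follows essentially the same route as the paper: the same region decomposition, the same use of Remark \ref{rmk:mult-by-delta} and the Step 2.1 bounds in the outer region, and the same reliance on properties (3)--(5) of Lemma \ref{lem:glue-f-helper} (together with the $2\eps/r^2$ tangential cone boost and Cauchy--Schwarz absorption of the $ir$ cross terms) in the inner and transition regions, with matching parameter dependencies. The only difference is cosmetic: the paper treats $(0,r_2)$ and $(r_2,r_2^+)$ with a single set of estimates, noting that the helper-lemma bounds hold verbatim for $f_2=\delta r^\alpha$, where you instead compute the inner cone region explicitly.
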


\begin{proof}[Proof of Lemma \ref{lem:glue-f} given Lemma \ref{lem:glue-f-helper}]
Let us apply Lemma \ref{lem:glue-f-helper}, to obtain $f_2$.  We will choose our constants $r_2, \alpha,\hat\delta$ later in the proof.  We begin with the Ricci curvature computation, where $g_B$ is the base metric for the {\it original} metric from Lemma \ref{lem:inductive_step2:2} :

\begin{align}
\label{eq:f-eps-prod}
(\Ric_{g_2})_{rr} &= (\Ric_{g_B})_{rr} - 2\frac{f_2''}{f_2}\,, \notag\\
(\Ric_{g_2})_{ir} &= (\Ric_{g_B})_{ir} - 2\frac{(\nabla^2_{g_B} f_2)_{ir}}{f_2}\,, \notag\\
(\Ric_{g_2})_{ij} &= (1 - \eps)^2(\Ric_{g_B})_{ij} + (1 - (1 - \eps)^2)(\Ric_{g_r})_{ij} -2\frac{(\nabla^2_{g_B} f_2)_{ij}}{f_2} + (1 - (1 - \eps)^2)\left(\frac{2}{r} (g_2)_{ij} + (r^2g_r')_{ij}\right)\frac{f_2'}{f_2}\,, \notag\\
(\Ric_{g_2})_{\alpha\beta} &= \left[\frac{1}{f_2^2} -\frac{1}{(1 - \eps)^2}\left(\frac{\Delta_{g_B} f_2}{f_2} + \frac{|\nabla^{g_B} f_2|^2_{g_B}}{f_2^2}\right) + \frac{1 - (1 - \eps)^2}{(1 - \eps)^2}\left( \frac{f_2''}{f_2} + \frac{(f_2')^2}{f_2^2} + \left(\frac{1}{2}\tr_{g_r}(g_r') + \frac{3}{r}\right)\frac{f_2'}{f_2} \right)\right](g_2)_{\alpha\beta}\,.
\end{align}

To estimate the Ricci curvature, we have three main regimes to study, namely $r\in (0,r_2)\cup (r_2,r^+_2)\cup (r^+_2,r_1)$.  The region $r \in (r_1,2)$ is covered by Remark \ref{rmk:mult-by-delta} and the previous Steps of the construction.  Let us begin with the region $r\in (r^+_2,r_1)$, and let us use Lemma \ref{l:step2:1:main}.  If we allow $\hat\delta<\hat\delta(\|f\|_\infty,r_2^+,|\lambda|)$ then we can use Lemma \ref{l:step2:1:main} to estimate
\begin{align}
	&\left.\Ric_{g_2}\right|_{TB_2(p)}>(\lambda - C(4)\,\eps)\, ,\notag\\
	&\left.\Ric_{g_2}\right|_{TS^2} > f_2^{-2} - C(4)\eta\, r^{-1} \geq \frac{1}{\hat\delta^2f^2}-\frac{C\eta}{r_2^+}>\lambda\, .
\end{align}
In particular, once $r_2^+$ has been fixed we may fix $\hat\delta$ sufficiently small to control the above.  Let us therefore focus on the more challenging situations: when $r\in (0,r_2)\cup(r_2,r^+_2)$.\\

To begin, let us record some basic estimates which will be used.  Recalling \eqref{e:step2:1:metric_decay_estimates}, we have that

\begin{equation}
(2 - C\eta r^2)g_r \leq \Ric_{g_r}\, ,\qquad\|g'_r\|_{L^\infty(S^3, g_r)}\leq C\eta\, r \, .
\end{equation}

In view of Lemma \ref{lem:glue-f-helper}, we have in the region $r_2 < r < r^+_2$ that

\begin{equation}\label{e:step2.2:f_estimates}
\alpha^{-1}|\nabla_{g_B} \ln f_2|_{g_B}^2, |\nabla^2_{g_B} \ln f_2|_{g_B} < C\frac{\alpha}{r^2}\,,\qquad \frac{\alpha}{2r^2} \leq -\frac{f''_2}{f_2} \, .
\end{equation}

Moreover, the same estimates hold when $0 < r < r_2$, because $f_2 = \delta r^\alpha$ in this region. Let us analyze each $B_2(p) \times S^2$ block of $\Ric_{g_2}$ according to \eqref{eq:f-eps-prod}.  For the $r,i,j,..$ block corresponding to the base $B_2(p)$ we can use the above estimates to get

\begin{align}
(\Ric_{g_2})_{rr} &> \frac{\alpha}{2r^2} -  C\eta \,,\notag\\
\big|(\Ric_{g_2})_{ir}\big| &< \left(C(\eta)\frac{\alpha}{r^2} + C\eta\right)\,,\notag\\
(\Ric_{g_2})_{ij} &> \left(\frac{(2 - C\eta r^2)\eps}{r^2} -C\left(\frac{\alpha}{r^2} + \eta \right)\right)(g_2)_{ij}\, .
\end{align}

If we require that $\alpha < \alpha(\eps)$ and $r_2^{+} < r_2^{+}(\alpha, \eps)$ are small enough, then

\begin{equation}
(\Ric_{g_2})_{rr} > \frac{\alpha}{4r^2}\,,\qquad (\Ric_{g_2})_{ij} > \frac{\eps}{r^2}(g_2)_{ij}\,,\qquad |(\Ric_{g_2})_{ir}| < C\frac{\alpha}{r^2}\,.\label{eq:f-Ric-pos-ests}
\end{equation}

If $\alpha < \alpha(\eps)$ is again small enough then this gives the estimate $\left.\Ric_{g_2}\right|_{TB_2(p)} > \frac{\eps}{2r^2}$ .  If we finally require that $r_2^+<r_2^+(\epsilon,|\lambda|)$ then this gives the required estimate for the $B_2$ block.  \\

For the $S^2$ block of $\Ric_{g_2}$, we argue separately for $r \in (\hat{r}/2,r_2)$ and $r \in (r_2,r_2^+)$. When $r\in (r_2,r_2^+)$ let us use \eqref{eq:f-eps-prod} and \eqref{e:step2.2:f_estimates} in order to estimate

\begin{equation}
(\Ric_{g_2})_{\alpha\beta} > \left(\frac{1}{\|f_2\|_\infty^2} - C\left(\frac{\alpha}{r_2^2} + \eps\eta\alpha\right)\right)(g_2)_{\alpha\beta}\,.
\end{equation}

Recalling the bound $\|f_2\|_\infty \leq C(\hat{\delta} \|f\|_\infty)^{1/2}$ from Lemma \ref{lem:glue-f-helper}, it is clear that a small enough choice of $\hat{\delta} < \hat{\delta}(\|f\|_\infty, \lambda, r_2)$ ensures that $(\Ric_{g_2})_{\alpha\beta} > \lambda (g_2)_{\alpha\beta}$ for $r\in (r_2,r_2^+)$. We can now focus on the region $r \in (\hat{r}/2,r_2)$, where

\begin{equation}
(\Ric_{g_2})_{\alpha\beta} > \left(\frac{1}{(\delta r_2^\alpha)^2} - C\frac{\alpha}{\hat{r}^2}\right)(g_2)_{\alpha\beta}\,.
\end{equation}

We see that for $\delta < \delta(\hat{r}, |\lambda|)$, we have the desired $(\Ric_{g_2})_{\alpha\beta} > \lambda(g_2)_{\alpha\beta}$ in this region, which would complete the proof if $\delta$ were sufficiently small. However, we have from Lemma \ref{lem:glue-f-helper} that $\delta = \delta(\hat{\delta}\|f\|_\infty \mid r_2)$, so it suffices to require that $\hat{\delta} < \hat{\delta}(\|f\|_\infty, r_2, \hat{r}, |\lambda|)$ to ensure that $\delta$ is sufficiently small.
\end{proof}

\vspace{.2cm}

\begin{proof}[Proof of Lemma \ref{lem:glue-f-helper}]
Before diving into the proof, we establish some new notation for the sake of legibility. We label

\begin{equation}
f_+(r,\omega) := \hat{\delta}f_1(r,\omega)\,,\qquad f_-(r,\omega) = f_-(r) := \delta r^\alpha\,,
\end{equation}

\noindent where $\delta$ will be specified later in the proof. Also, instead of referring to the radii $r_2$ and $r^+_2$ directly, it will be convenient to write the interval of interpolation $(r_2,r^+_2)$ in terms of a midpoint $r_m := (r_2 + r_2^+)/2$ and a radius $\rho r_m := (r_2^+ - r_2)/2$. In particular, the choice of radii $r_2,r_2^+$ from the statement of Lemma \ref{lem:glue-f-helper} will instead take the form of a choice of a constant $\rho$ and radius $r_m < r_m(\alpha)$. \\

The remaining proof has multiple steps, which we will break down into pieces: \\

{\bf (Locating the Intersection Set of $f_-$ and $f_+$):}  We first seek to set it up so that the intersection set $\{f_+ = f_-\} = \{(r,\omega) \mid f_+(r,\omega) = f_-(r)\}$ is approximately at our radius $r_m$, which requires selecting $\delta$ depending on $r_m$.  We will estimate the deviation of the intersection set from this radius.  These estimates will be used in the next steps of the proof.\\

Begin by observing that the $\nabla_{g_B} \ln f$ bound gives the estimate
\begin{equation}
\label{eq:f-plus-est}
|\ln f_+(r,\omega) - \ln f_+(p)| \leq \eta r\qquad  \implies \qquad f_+(r,\omega) \in [f_+(p)e^{-2\eta r_m}, f_+(p)e^{2\eta r_m}]\,, \qquad \forall r \leq 2r_m \,.
\end{equation}

One then checks under what conditions $f_-$ takes values in this same interval:

\begin{align}
\label{eq:f-minus-est}
&f_-(r) = \delta r^\alpha \in [f_+(p)e^{-2\eta r_m}, f_+(p)e^{2\eta r_m}] \notag\\
\iff\qquad &r \in [(\delta^{-1}f_+(p)e^{-2\eta r_m})^{1/\alpha},(\delta^{-1}f_+(p)e^{2\eta r_m})^{1/\alpha}] \,.
\end{align}

It is at this point that we make the choice $\delta := f_+(p)r_m^{-\alpha}e^{2\eta r_m}$. For this choice of $\delta$, we have $(\delta^{-1}f_+(p)e^{2\eta r_m})^{1/\alpha} = r_m$, and thus when \eqref{eq:f-minus-est} holds we have $r \leq 2r_m$.  Observe that we now have the relationships

\begin{equation}
 \begin{cases}
f_-(r)< f_+(p)e^{-2\eta r_m} \leq f_+(r,\omega),& r < r_m e^{-4\eta r_m/\alpha}\\
f_-(r)\in [f_+(p)e^{-2\eta r_m}, f_+(p)e^{2\eta r_m}],& r_me^{-4\eta r_m/\alpha}\leq r \leq r_m \\
f_-(r)> f_+(p)e^{2\eta r_m} \geq f_+(r,\omega),&  r_m < r \leq 2 r_m
\end{cases}\,.
\end{equation}

In particular, we have

\begin{equation}
\{f_- = f_+\} \cap B_{2r_m}(p) \subset A_{r_me^{-4\eta r_m/\alpha}, r_m}(p)\, .
\end{equation}

Let us additionally remark that $\{f_- = f_+\} \cap B_{2r_m}(p) \cap (\R{}_+ \times \{\omega\})$ is nonempty for each $\omega \in S^3$ by the intermediate value theorem. We may therefore define $g(\omega) \in [r_m e^{-4\eta r_m/\alpha}, r_m]$ for each $\omega \in S^3$ to be a radius such that $f_+(g(\omega),\omega) = f_-(g(\omega))$.  We can ensure that $(g(\omega),\omega)$ always lies in the interpolation region $r \in ((1 - \rho)r_m, (1 + \rho)r_m)$ by requiring that $r_m < r_m(\alpha)$ is small enough that $e^{-4\eta r_m/\alpha} > 1 - \rho$.  It is not necessary that $g$ be continuously defined.\\

{\bf (Definition of $f_2$ and $C^1$ Cubic Interpolation):} Our construction of $f_2$ will make use of the general notion of a $C^1$ cubic interpolation.  Namely, we will ask that $f_2$ be the uniquely defined $C^1$ function satisfying
\begin{align}
	\ln f_2(r,\omega) = \begin{cases}
		\ln f_-(r) &\text{ if } r\leq (1-\rho)r_m\, \\
		Q(r,\omega)=cubic\; polynomial\;in\; r &\text{ if } r\in  \big((1-\rho)r_m, (1+\rho)r_m\big)\, \\
		\ln f_+(r,\omega) &\text{ if } r\geq (1+\rho)r_m\, 
	\end{cases}\, .
\end{align}
 
We see from the above that $Q(r,\omega)$ is therefore well defined by the values of $f_{\pm}$ and $f'_{\pm}$ at the end points of the interval $[(1- \rho)r_m,(1+\rho)r_m]$ .  It will be helpful to write the form of $Q(r,\omega)$ explicitly by
\begin{align}\label{eq:cubic-int}
	Q(r,\omega) &:= \left[\frac{r - (1 - \rho)r_m}{2\rho r_m}\right]^2\left[\ln f_+\big((1+\rho)r_m,\omega\big) - \big((1+\rho)r_m - r\big)\Big((\ln f_+)'\big((1+\rho)r_m,\omega\big) - \frac{1}{\rho r_m}\ln f_+\big((1+\rho)r_m,\omega\big)\Big)\right] \notag\\
&+ \left[\frac{(1+\rho)r_m - r}{2\rho r_m}\right]^2\left[\ln f_-\big((1-\rho)r_m\big) + \big(r - (1-\rho)r_m\big)\Big((\ln f_-)'\big((1-\rho)r_m\big) + \frac{1}{\rho r_m}\ln f_-\big((1-\rho)r_m\big)\Big)\right]\,.
\end{align}

\vspace{.4cm}

{\bf (Concavity Estimates for $\ln f_2$):}  We can now estimate $(\ln f_2)''$ in the interpolation region $r \in ((1 - \rho)r_m, (1 + \rho)r_m)$.  Below, we use the mean value theorem to estimate the difference, at $g(\omega)$, between $r\mapsto \ln f_\pm(r,\omega)$ and its linearization centered at $r = (1 \pm \rho)r_m$:

\begin{align}
(\ln f_2)'' &= \frac{1}{2\rho r_m}\left((\ln f_+)'((1 + \rho)r_m,\omega) - (\ln f_-)'((1 - \rho)r_m)\right) \notag\\
&+ \frac{3(r - r_m)}{2\rho^3r_m^3}(g(\omega) - r_m)\left((\ln f_+)'((1 + \rho)r_m,\omega) - (\ln f_-)'((1 - \rho)r_m)\right)\notag\\ 
&+\left. \frac{3(r - r_m)}{2\rho^3r_m^3}\left[((1 + \rho)r_m - g(\omega)) (\ln f_+)'((1 + \rho)r_m,\omega) - \left(\ln f_+((1 + \rho)r_m,\omega) - \ln f_+(g(\omega),\omega)\right)\right]\,\right\} = O(\|\ln f_+''\|_{\infty})\notag\\
&+ \left.\frac{3(r - r_m)}{2\rho^3r_m^3}\left[(g(\omega) - (1 - \rho)r_m) (\ln f_-)'((1 - \rho)r_m) - \left(\ln f_-(g(\omega)) - \ln f_-((1 - \rho)r_m)\right)\right]\,\right\} = O(\|\ln f_-''\|_{\infty})\notag\\
&\leq \left(-\frac{1}{2\rho} + \frac{3}{2}\frac{|g(\omega) - r_m|}{\rho^2r_m} + \frac{C}{(1 - \rho)}\right)\frac{\alpha}{(1 - \rho)r_m^2} + \left(\frac{1}{2\rho} + \frac{3}{2}\frac{|g(\omega) - r_m|}{\rho^2 r_m} + Cr_m\right)\frac{\eta}{r_m}\,. \label{eq:f2-concavity}
\end{align}

The desired concavity should arise from the first term of \eqref{eq:f2-concavity}. We therefore pick the universal constant $\rho > 0$ small enough so that $-\frac{1}{2\rho} + \frac{C}{(1 - \rho)} < -\frac{4}{1 - \rho}$, and then $r_m \leq r_m(\alpha)$ small enough so that $|1 - e^{-4\eta r_m/\alpha}|\leq \frac{2\rho^2}{3(1 - \rho)}$. Since  $g(\omega) \in (r_m e^{-4\eta r_m/\alpha}, r_m)$, the constraint on $r_m$ implies that $|g(\omega) - r_m| \leq \frac{2\rho^2}{3(1 - \rho)}r_m$. Thus,

\begin{equation}
\left(-\frac{1}{2\rho} + \frac{3}{2}\frac{|g(\omega) - r_m|}{\rho^2r_m} + \frac{C}{(1 - \rho)}\right) \leq -\frac{3}{1 - \rho}\,.
\end{equation}

We also require that $r_m \leq r_m(\alpha)$ is small enough so that $\frac{\eta}{r_m} \leq (\frac{1}{2\rho} +\frac{1}{(1 - \rho)} + Cr_m)^{-1}\frac{\alpha}{(1 - \rho)^2r_m^2}$ in order to absorb the lower order second term of \eqref{eq:f2-concavity}. This yields the claimed concavity 

\begin{equation}
(\ln f_2)'' \leq -2\frac{\alpha}{[(1-  \rho)r_m]^2}\leq -2\frac{\alpha}{r^2}\,, \qquad \forall r \in ((1 - \rho)r_m,(1 + \rho)r_m)\,.
\end{equation}

\vspace{.4cm}

{\bf (Radial Zeroth and First Order Estimates on $f_2$):} We begin this step by obtaining a bound on $(\ln f_2)'$.  This allows us to turn the concavity estimates for $\ln f_2$ into concavity estimates for $f_2$ .  Additionally we will use the estimate on $(\ln f_2)'$ to obtain $L^\infty$ control for $f_2$ in the interpolation region. \\

The previous step implies that for fixed $\omega \in S^3$, $r\mapsto\ln f_2'(r,\omega)$ is decreasing from the value $\ln f_2'((1 - \rho)r_m,\omega) = \alpha [(1 - \rho)r_m]^{-1}$ at $r = (1 - \rho)r_m$. Thus,

\begin{align}
|(\ln f_2)'| &\leq \frac{\alpha}{(1 - \rho)r_m} \leq \alpha \frac{1 + \rho}{1 - \rho}r^{-1} \, .
\end{align}
Combining this with our concavity estimate for $\ln f_2$ gives
\begin{align}
\frac{f_2''}{f_2} &= (\ln f_2) '' + (\ln f_2')^2 \leq -2\frac{\alpha}{r^2} +  \left(\frac{1 + \rho}{1 - \rho}\right)^2 \frac{\alpha^2}{r^2} \leq -\alpha r^{-2}\,,
\end{align}

as long as $\alpha \leq \left(\frac{1 - \rho}{1 + \rho}\right)^2$.  Note that this proves Lemma \ref{lem:glue-f-helper}.\ref{eq:s2b-3}.  To obtain $L^\infty$ control on $f_2$, let us estimate $f_2$ on $((1 - \rho)r_m, (1 + \rho)r_m)$ by using the explicit formula \eqref{eq:cubic-int} for $Q$:

\begin{align}
Q(r,\omega) &\leq \left[\frac{r - (1 - \rho)r_m}{2\rho r_m}\right]^2\left[\ln f_+((1 + \rho)r_m,\omega) + \eta((1 + \rho)r_m - r)\right] \notag\\
&+ \left[\frac{(1 + \rho)r_m - r}{2\rho r_m}\right]^2\left[\ln f_-((1 - \rho)r_m) + \alpha\frac{(r - (1 - \rho)r_m)}{(1 - \rho)r_m}\right] \notag\\
&\leq \left(\left[\frac{r - (1 - \rho)r_m}{2\rho r_m}\right]^2 + \left[\frac{(1 + \rho)r_m - r}{2\rho r_m}\right]^2\right)\ln (\|f_+\|_\infty) + \frac{2\rho}{1 - \rho}\alpha + 2\rho r_m\eta\notag\\
&\leq \frac{\ln \|f_+\|_\infty}{2}+ C\alpha + Cr_m\eta \,.
\end{align}

Note that for the second inequality above, we used that $f_-((1 - \rho)r_m) \leq f_-(g(\omega)) = f_+(g(\omega))$. Since $f_-$ is increasing, this holds if $(1 - \rho)r_m \leq g(\omega)$, which itself will hold if $r_m < r_m(\alpha)$ is small enough because $g(\omega) \geq r_m e^{-4\eta r_m/\alpha}$. Taking $\exp$ on both sides proves Lemma \ref{lem:glue-f-helper}.\ref{eq:s2b-5}.\\

{\bf (Remaining Derivative Estimates):} The last remaining item is Lemma \ref{lem:glue-f-helper}.\ref{eq:s2b-4}, which we now turn to. The constraint $r_m \leq r_m(\alpha)$ is also finalized in this last step. \\

For the remaining computations, we pick coordinates $\partial_i$ on $S^3$ that are normal at $\omega \in S^3$ for the metric $g_r$, and assume that we are working in the region $r\in ((1 - \rho)r_m, (1 + \rho)r_m)$. Before launching into the remaining derivative estimates, we recall two basic bounds that we will need, both from Corollary \ref{c:cone_exponential}:

\begin{equation}
\|g_r'\|_{L^\infty(S^3,g_r)} \leq C(4) \eta r\,,\qquad \|g_r''\|_{L^\infty(S^3,g_r)}\leq C(4)\eta\,.
\end{equation}

We begin with the tangential derivatives of $f_2$. One notices the identity, using the notation \eqref{eq:cubic-int},

\begin{align}\label{e:step2:derivative_log_f:1}
	\partial_i \ln f_2 &= \left[\frac{r - (1 - \rho)r_m}{2\rho r_m}\right]^2\left[\partial_i\ln f_+\big((1+\rho)r_m,\omega\big) - \big((1+\rho)r_m - r\big)((\partial_i\ln f_+)'\big((1+\rho)r_m,\omega\big) - \frac{1}{\rho r_m}\partial_i\ln f_+\big((1+\rho)r_m,\omega\big)\right]\,.
\end{align}

What is important is that the right hand side is a linear combination of $\partial_i \ln f_+((1 + \rho)r_m,\omega)$ and $O(r)\partial_i \ln f_+'((1 + \rho)r_m,\omega)$ with uniformly bounded coefficients.   The first term $\partial_i \ln f_+((1 + \rho)r_m,\omega)$ is bounded by $\eta$, while  the second equals $O(r)(\partial_i \ln f_+') = O(r)(\nabla_{i,r}^2\ln f_+ + r^{-1}(\delta_i^k + rg_r^{kj}(g_r')_{ji}/2)\partial_k\ln f_+)$, which can be bounded by $C\eta$. Thus, for $r_m \leq r_m(\alpha)$, we can estimate

\begin{align}
|\nabla_i \ln f_2|_{r^2(g_r)_{ij}} \leq C(4)\eta \leq \frac{\alpha}{r}\,.
\end{align}

Similarly, $\partial_i \partial_j\ln f_2$ is a linear combination of $\partial_i \partial_j\ln f_+((1 + \rho)r_m,\omega)$ and $O(r)\partial_i\partial_j \ln f_+'((1 + \rho)r_m,\omega)$ with bounded coefficients, where

\begin{align}
\partial_i\partial_j \ln f_+ &= \nabla^2_{i,j}f_+ - \frac{1}{2}(2r(g_r)_{ij} + r^2(g_r')_{ij})\partial_r f_+\,, \notag\\
\partial_i\partial_j \ln f_+' &= \nabla^3_{r,i,j} f_+ - \frac{1}{2}(2(g_r)_{ij} + 4r(g_r')_{ij} + r^2(g_r'')_{ij})\partial_r f_+ -\frac{1}{2} (2r(g_r)_{ij} + r^2(g_r')_{ij})\partial^2_rf_+ \notag\\
&+ \frac{1}{2r}(2\delta_i^k + rg_r^{k\ell}(g_r')_{\ell i})\nabla^2_{k,j}f_+ + \frac{1}{2r}(2\delta_j^k + rg_r^{k\ell}(g_r')_{\ell j}))\nabla^2_{k,i}f_+\,. 
\end{align}

The highest order term is $-(g_r)_{ij}\partial_r f_+$ from the second equation, and can be bounded by $\eta r^{-2}$.  The other terms can be estimated by $C\eta r^{-1}$. Using this and the previously obtained bound on $\ln f_2'$, and again requiring that $r_m \leq r_m(\alpha)$ we get

\begin{align}
|\nabla^2_{ij}\ln f_2|_{r^2(g_r)_{ij}} &\leq |\partial_i\partial_j\ln f_2|_{r^2(g_r)_{ij}} + \frac{1}{2}|(2r(g_r)_{ij} + r^2(g_r')_{ij})\ln f_2'|_{r^2(g_r)_{ij}} \notag\\
&\leq C\frac{\eta}{r} + C\frac{\alpha}{r^2} \leq C\frac{\alpha}{r^2}\,.
\end{align}

Finally we repeat this strategy once more and observe as in \eqref{e:step2:derivative_log_f:1} that $\partial_i\partial_r \ln f_2 $ is a linear combination of $\partial_i \ln f_+((1 + \rho)r_m,\omega)$ and $O(r)\partial_i \ln f_+'((1 + \rho)r_m,\omega)$ with coefficients that are bounded times a factor  $\leq r_m^{-1}$. We have already bounded these two expressions by $C\eta$, so for $r_m \leq r_m(\alpha)$ we get

\begin{align}
|\nabla^2_{i,r}\ln f_2|_{r^2(g_r)_{ij}} &\leq |\partial_i\partial_r\ln f_2|_{r^2(g_r)_{ij}} + \frac{1}{2r}|(2\delta_i^k + rg_r^{k\ell}(g_r')_{\ell i})\partial_k\ln f_2|_{r^2(g_r)_{ij}} \notag\\
&\leq C(4) \eta r_m^{-1} + C(4)\eta r^{-1} \leq C(4)\eta\frac{\alpha}{r^2}\,,
\end{align}
which finishes the proof of Lemma \ref{lem:glue-f-helper}.

\end{proof}

\subsection{Step 2.3: Interpolating the Warped Cone to a Homogeneous Cone}\label{ss:step2:3}

We ended the last subsection having constructed a metric
\begin{align}
	g_{2} = g_{B,2} + f_2^2 g_{S^2}\, .
\end{align}
This metric has the property that for $r\leq r_2$ we can write
\begin{align}
	g_{B,2} &= dr^2 +(1-\epsilon)^2 r^2 g_r\, ,\notag\\
	f_2 &= f_2(r)= \delta r^\alpha\, ,
\end{align}
where recall $g_B = dr^2 + r^2 g_r$ was the original smooth metric with regularity scale $r_x>2$ , see \eqref{e:step2:regularity_scale}.  Our final goal in this section is to interpolate $g_r$ with the unit sphere metric $g_{S^3}$.  In this way we will build a metric

\begin{align}\label{e:step2:3:metric_ansatz}
	g_{3} := g_{B,3} + f_3^2 g_{S^2}\, ,
\end{align}

where $f_3 := f_2$ and $g_{B,3} = dr^2 +(1-\epsilon)^2r^2 g_{r,3}$ with
\begin{align}\label{e:step2:3:metric_ansatz:2}
	g_{r,3}:= \begin{cases}
 	g_r&\text{ if } r\geq 2r_3\, ,\\
 	\xi(r)g_{r}+(1-\xi(r))g_{S^3}&\text{ if } r\in [r_3,2r_3]\\
 	g_{S^3}&\text{ if } r\leq r_3
 \end{cases}\, .
\end{align}

Here, $\xi(r)$ is a smooth cutoff on $[r_3,2r_3]$ satisfying the $r_3$-independent bounds

\begin{equation}
|\xi'(r)| \leq Cr^{-1}\,,\qquad |\xi''(r)| \leq Cr^{-2} \,.
\end{equation}

We will show that for $r_3$ sufficiently small, the metric $g_3$ will have the appropriate Ricci curvature bounds:

\begin{lemma}
	\label{lem:mod-x-sec}
	Let $g_3$ be defined as in \eqref{e:step2:3:metric_ansatz} and \eqref{e:step2:3:metric_ansatz:2} under the assumptions of Lemma \ref{lem:inductive_step2:2}.  Then for $r_3\leq r_3(\alpha,\lambda,\epsilon)$ we have that $\Ric_{g_3}>\lambda - C(4)\eps$ on $\big(B_2(p) \setminus B_{\hat{r}/2}(p)\big)\times S^2$.
\end{lemma}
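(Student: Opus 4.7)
The region $(B_2(p) \setminus B_{\hat r/2}(p)) \times S^2$ splits into three subregions by the value of $r := \dist_{g_B}(\cdot, p)$. For $r > 2r_3$ we have $g_3 = g_2$ by \eqref{e:step2:3:metric_ansatz:2}, so the bound follows directly from Lemma \ref{lem:glue-f}. For $\hat r/2 < r \leq r_3$ the metric takes the explicit warped-cone form $g_3 = dr^2 + (1-\epsilon)^2 r^2 g_{S^3} + \delta^2 r^{2\alpha} g_{S^2}$, and the Ricci tensor is block-diagonal in the $r/S^3/S^2$ splitting. Its three blocks may be computed directly using the formulas from Section \ref{ss:step1.4}: up to lower-order errors, the radial component equals $2\alpha(1-\alpha)/r^2$, the $TS^3$ components are bounded below by approximately $4\epsilon/r^2$, and the $TS^2$ components are of order $1/(\delta r^\alpha)^2$. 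Positivity of the first two requires $\alpha < \alpha(\epsilon)$, and the $TS^2$ block dominates $|\lambda|$ provided $\delta$ (and hence $\hat\delta$) was taken small relative to $\hat r$ in the earlier steps.

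The transition annulus $r \in [r_3, 2r_3]$ is the main technical piece. The key observation is that the cutoff bounds $|\xi'| \leq Cr^{-1}$ and $|\xi''| \leq Cr^{-2}$ interact with the Corollary \ref{c:cone_exponential} estimates $\|g_r - g_{S^3}\|_{g_{S^3}} \leq C\eta r^2$, $\|g_r'\|_{g_{S^3}} \leq C\eta r$, and $\|g_r''\|_{g_{S^3}} \leq C\eta$ with matching scaling, so that the interpolated family $g_{r,3} = \xi g_r + (1-\xi)g_{S^3}$ obeys the same three estimates with the same orders in $\eta$ and $r$. In particular $\Ric_{g_{r,3}} \geq (2 - C\eta r^2) g_{r,3}$. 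I then apply the Ricci formula for a metric of the form $dr^2 + (1-\epsilon)^2 r^2 g_{r,3} + f_3^2 g_{S^2}$, which is structurally the same template as the combination of \eqref{e:step2:1:Ricci:1} and \eqref{eq:f-eps-prod} but with $g_r$ replaced by $g_{r,3}$: the positive contributions $2\alpha(1-\alpha)/r^2$ (radial), $\geq 2\epsilon/r^2$ (tangential), and $1/(\delta r^\alpha)^2$ ($TS^2$) all persist, while every new error term involving $g_{r,3}'$ or $g_{r,3}''$ is bounded by $C\eta/r$ or $C\eta$. Choosing $r_3 \leq r_3(\alpha, \lambda, \epsilon)$ small then absorbs these errors and the $|\lambda|$ contributions into the allowed slack $-C(4)\epsilon$.

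The main obstacle is exactly this transition region: a priori one worries that the product $\xi''(g_r - g_{S^3})$ could contribute a $1/r^2$-scale term capable of swamping the positive $\alpha/r^2$ gain in $\Ric_{rr}$, and similarly for the tangential and cross terms. The saving grace is the $\eta r^2$ scaling of the deviation $g_r - g_{S^3}$, which exactly cancels the $1/r^2$ singularity in $\xi''$ and keeps every such cross-term of order $O(\eta)$; the analogous matching handles $\xi' g_r'$ and $\xi g_r''$. With these bounds in hand, the remainder reduces to a bookkeeping argument completely parallel to the proofs of Lemmas \ref{l:step2:1:main} and \ref{lem:glue-f}, where one tracks each entry of the Ricci formula and verifies that the dominant positive contributions survive under the new family $g_{r,3}$.
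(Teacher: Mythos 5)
Your proposal is correct and follows essentially the same route as the paper: handle $r>2r_3$ via Lemma \ref{lem:glue-f}, observe that the cutoff bounds $|\xi'|\leq Cr^{-1}$, $|\xi''|\leq Cr^{-2}$ pair with the Corollary \ref{c:cone_exponential} decay of $g_r-g_{S^3}$, $g_r'$, $g_r''$ so that $g_{r,3}$ satisfies the same scaled estimates, then compute the block Ricci curvatures of $dr^2+(1-\eps)^2r^2g_{r,3}+\delta^2r^{2\alpha}g_{S^2}$ and absorb the $O(\eta)$ errors by taking $\alpha\lesssim\eps$, $r_3$ and $\delta$ small. The only (immaterial) difference is that you treat the exact-cone region $\hat r/2<r\leq r_3$ separately via the Step 1.4 formulas, whereas the paper runs the same estimates uniformly on all of $\hat r/2\leq r\leq 2r_3$.
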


\begin{proof}
	For radii $r > 2r_3$, the Ricci lower bound follows from Lemma \ref{lem:glue-f} and the fact that $g_2 = g_3$ in this region. Thus, we assume for the rest of the proof that $r \leq 2r_3$. The main basic estimates we will need are from Corollary \ref{c:cone_exponential}, namely
	
\begin{align}
\label{eq:step-3-met-ests}
\|g_r - g_{S^3}\|_{C^2(S^3, g_r)} &\leq C\eta r^2 \,, \notag\\
\|g_r'\|_{C^1(S^3, g_r)} &\leq C\eta r \,, \notag\\
\|g_r''\|_{L^\infty(S^3, g_r)} &\leq C\eta\,.
\end{align}

Let us first apply the above in order to see the estimates

\begin{align}
\label{eq:step-3-int-ests}
\|g_{r,3} - g_{S^3}\|_{C^2(S^3,g_{r,3})} &\leq C\eta r^2\,,\notag\\
\|g_{r,3}'\|_{C^1(S^3,g_{r,3})} = \|\xi'(g_r - g_{S^3}) + \xi g_r'\|_{C^1(S^3,g_{r,3})} &\leq C \eta r\,, \notag \\
\|g_{r,3}''\|_{L^\infty(S^3,g_{r,3})} = \|\xi''(g_r - g_{S^3}) + 2\xi' g_r' + \xi g_r''\|_{L^\infty(S^3,g_{r,3})} &\leq  C \eta \,.
\end{align}

We are now in a position to estimate $\Ric_{g_3}$, which we compute as follows:

\begin{align}
(\Ric_{g_3})_{rr} &= -\frac{1}{2}\tr_{g_{r,3}}(g_{r,3}'') - \frac{1}{r}\tr_{g_{r,3}}(g_{r,3}') + \frac{1}{4}|g_{r,3}'|_{g_{r,3}}^2 - 2\frac{\alpha(\alpha - 1)}{r^2}\,, \notag\\
(\Ric_{g_3})_{ir} &= \frac{1}{2}(\partial_i\tr_{g_{r,3}}(g_{r,3}') - (\tr_{g_{r,3}}^{1,2}(\nabla g_{r,3}'))_i)\,,\notag\\
(\Ric_{g_3})_{ij} &= (1 - (1 - \eps)^2)(\Ric_{g_{S^3}})_{ij} + (\Ric_{g_{r,3}} - \Ric_{g_{S^3}})_{ij} - \frac{(1 - \eps)^2}{2}(r^2g_{r,3}'')_{ij} \notag\\
&- (1 - \eps)^2\left(\frac{3}{2r} + \frac{1}{4}\tr_{g_{r,3}}(g_{r,3}')\right)(r^2g_{r,3}')_{ij} + \frac{(1 - \eps)^2}{2}(rg_{r,3}')_{ik}g_{r,3}^{k\ell}(rg_{r,3}')_{\ell j} + \frac{1}{2r}\tr_{g_{r,3}}(g_{r,3}')(g_{3})_{ij} \notag\\
&-\alpha\left(\frac{2}{r^2}(g_3)_{ij} + (1 - \eps)^2(rg_{r,3}')_{ij}\right) + 2((g_{S^3})_{ij} - (g_{r,3})_{ij})\,, \notag\\
(\Ric_{g_3})_{\alpha\beta} &= \left[\frac{1}{\delta^2 r^{2\alpha}} - \frac{\alpha}{r^2}\left(2(1 + \alpha) + \frac{1}{2}\tr_{g_{r,3}}(rg_{r,3}')\right)\right](g_3)_{\alpha\beta} \,.
\end{align}

We may now use the bounds \eqref{eq:step-3-int-ests} above to estimate the components of $\Ric_{g_3}$ by

\begin{align}
(\Ric_{g_3})_{rr} &> 2\frac{\alpha(1 - \alpha)}{r^2} - C(4)\eta\,, \notag\\
|(\Ric_{g_3})_{ir}|_{(g_3)_{ij}} &< C(4)\eta\,, \notag\\
(\Ric_{g_3})_{ij} &> \left(2\frac{\eps}{r^2} - C\left(\frac{\alpha}{r^2} + C(4)\eta\right)\right)(g_3)_{ij}\,, \notag\\
(\Ric_{g_3})_{\alpha\beta} &> \left(\frac{1}{\delta^2 r^{2\alpha}} - C\left(\frac{\alpha}{r^2} + C(4)\eta\right)\right)(g_3)_{\alpha\beta}\,.
\end{align}

We see from these estimates that requiring $\alpha \leq C\eps$, $r_3 \leq r_3(\alpha,\eps, \lambda)$, and $\delta < \delta(\hat{r}, |\lambda|)$ small enough guarantees that $\Ric_{g_3} > \lambda$ for the range $\hat{r}/2\leq r \leq 2r_3$ under consideration. Note that without loss of generality these requirements on $\alpha$ and $\delta$ (by way of $\hat{\delta}$) held when they were chosen in Lemma \ref{lem:glue-f}.

\end{proof}

\subsection{Proof of Lemma \ref{lem:inductive_step2:2}}\label{ss:step2:4}

By construction, the metric $g_3$ with inner radius $\hat{r}$ subject to $\hat{r} < r_3 = r_3(\alpha,\lambda,\eps)$ from Lemma \ref{lem:mod-x-sec} satisfies $\Ric_{\hat{g}} > \lambda - C(4)\eps$  on $(B_2(p)\setminus B_{\hat{r}/2}(p))\times S^2$ and all the conditions of Lemma \ref{lem:inductive_step2:2}, except that it is not smooth.  This metric is globally $C^1$ away from $p$, but fails to be smooth at $r \in \{0, \,r_2,\,r^+_2\}$. The singularity at $0$ is as described in the statement of Lemma \ref{lem:inductive_step2:2}, and the other two radii can be smoothed within the class of warped product metrics to a metric $\hat{g} = \hat{g_B} + \hat{f}^2g_{S^2}$ while preserving the $\Ric$ bound by the $C^1$ gluing of Lemma \ref{l:C1_warped_gluing}.\\

The last remaining statement of Lemma \ref{lem:inductive_step2:2} to be proved is that the identity map $\Id:(B_2(p),\hat{g}_B)\to (B_2(p),g_B)$ is $(1+2\epsilon)$-bi-Lipschitz.  As the smooth metric $\hat{g}$ can be made $C^1$ close to $g_3$, we can prove a slightly stronger estimate directly for $g_3$ and the result will follow for our final smooth metric as in Lemma \ref{l:C1_warped_gluing}. \\

 The bi-Lipschitz condition will follow from the following metric comparisons for different ranges of $r$, which themselves follow by construction and the estimate $\|g_{r,3} - g_{r}\|_{C^0(S^3,g_{r})} \leq C\eta r^2$:

\begin{equation}
\begin{drcases}
g_{B}& \\
(1 - \eps)^2g_B& \\
(1- \eps)^2(1 - C(4)\eta r^2) g_B&
\end{drcases} 
\leq g_{B,3} \leq 
\begin{cases}
g_B\,,& r \in (1,2] \\
g_B\,,& r \in (2r_3,1]\\
(1 + C(4)\eta r^2)g_B\,,& r \in (0, 2r_3]
\end{cases}\,.
\end{equation}

By again requiring that $r_3 \leq r_3(\eps)$ in Lemma \ref{lem:mod-x-sec}, we can simplify the above by replacing all lower and upper bounds by $(1 \pm \frac{3}{2}\eps)^2g_B$ respectively.  Estimating the bi-Lipschitz constant of $\phi$ then becomes a matter of unraveling definitions as

\begin{equation}
(1 - \frac{3}{2}\eps)|D\phi(v)|_{g_B} = (1- \frac{3}{2}\eps)|v|_{g_B} \leq |v|_{g_{B,3}} \leq (1 + \frac{3}{2}\eps)|D\phi(v)|_{g_B}\,, \qquad \forall v\in (TB_2(p), g_{B,3})\, ,
\end{equation}

which completes the proof. $\qed$
\vspace{.5cm}

\bibliographystyle{aomalpha}
\bibliography{HNW_Nonmanifold_Refs}

\end{document}